\newtheorem{Definition}{Definition}[chapter]
\newtheorem{Satz}[Definition]{Satz}
\newtheorem{Theorem}[Definition]{Theorem}
\newtheorem{Proposition}[Definition]{Proposition}
\newtheorem{Lemma}[Definition]{Lemma}
\theoremstyle{definition}
\newtheorem{Bemerkung}[Definition]{Bemerkung}
\theoremstyle{plain}
\newtheorem{Behauptung}[Definition]{Behauptung}
\renewcommand{\qedsymbol}{$\blacksquare$}
\newglossaryentry{1}
{
	name={$(X,\mathfrak{A},\mu,T)$, $(Y,\mathfrak{B},\nu,S)$},
	description={Maßtheoretische dynamische Systeme}
}
\newglossaryentry{2}{name={$\mu,\nu,\sigma, \lambda_{\mathbb{T}}$},description={Wahrscheinlichkeitsmaße}}
\newglossaryentry{3}{name={$\mathfrak{A}$, $\mathfrak{B}$, $\mathfrak{D}$, $\mathfrak{E}$},description={$\sigma$"=Algebren}}
\newglossaryentry{4}{name={$\mathbbm{1}_A$},description={Charakteristische Funktion der Menge $A$}}
\newglossaryentry{5}{name={$\mu\otimes \nu$},description={Produktmaß von $\mu$ und $\nu$}}
\newglossaryentry{6}{name={$\mathcal{K}$},description={Kroneckerfaktor}}
\newglossaryentry{7}{name={$\mathcal{K}^\bot$},description={Orthogonales Komplement des Kroneckerfaktors $\mathcal{K}$}}
\newglossaryentry{8}{name={$\overline{X}$},description={Abschluss des metrischen oder normierten Raumes $X$}}
\newglossaryentry{10}{name={$\mathbb{T}$},description={Komplexer Einheitskreis}}
\newglossaryentry{11}{name={$\mathcal{N}$},description={Nullmenge}}
\newglossaryentry{12}{name={$\operatorname{span}(V)$},description={lineare Hülle von $V$}}
\newglossaryentry{13}{name={$M(\mathbb{T})$},description={Menge aller endlichen Borel"=Maße auf dem Einheitskreis $\mathbb{T}$}}
\newglossaryentry{14}{name={$\hat{\sigma}(n)$},description={$n$'ter Fourierkoeffizient des Maßes $\sigma$}}
\newglossaryentry{15}{name={$\langle \cdot, \cdot \rangle$},description={Skalarprodukt}}
\newglossaryentry{17}{name={$\mathbb{N}$},description={Menge der positiven ganzen Zahlen}}
\newglossaryentry{18}{name={$\mathcal{RI}$},description={Abbildung aus der Menge $\{\operatorname{Re}, \operatorname{Im}, -\operatorname{Re}, -\operatorname{Im}\}$}}
\newglossaryentry{19}{name={$A\triangle B$},description={Symmetrische Differenz von $A$ und $B$: $A\triangle B:= (A\setminus B)\cup (B\setminus A)$}}
\newglossaryentry{20}{name={$p$},description={Dichte einer Zahlenfolge in den natürlichen Zahlen}}
\newglossaryentry{21}{name={$\#A$},description={Anzahl der Element in der Menge $A$}}
\newglossaryentry{22}{name={$\operatorname{Re}$},description={Abbildung, die eine komplexe Zahl auf ihren Realteil abbildet}}
\newglossaryentry{23}{name={$\operatorname{Im}$},description={Abbildung, die eine komplexe Zahl auf ihren Imaginärteil abbildet}}
\newglossaryentry{24}{name={$\pi$},description={Faktorabbildung}}
\newglossaryentry{25}{name={$(\tilde{X}, \tilde{\mathfrak{A}}, \tilde{\mu}, \tilde{T})$},description={Invertierbare Erweiterung des Systems $(X,\mathfrak{A},\mu,T)$}}
\newglossaryentry{26}{name={$K$},description={Kompakte Menge beziehungsweise Konstante}}
\newglossaryentry{28}{name={$\overline{\nu}$},description={Vervollständigung des Wahrscheinlichkeitsmaßes $\nu$}}
\newglossaryentry{27}{name={$\overline{\mathfrak{B}}$},description={Vervollständigung der $\sigma$"=Algebra $\mathfrak{B}$}}
\newglossaryentry{30}{name={$\sigma(\mathcal{A})$},description={Von der Menge $\mathcal{A}$ erzeugte $\sigma$"=Algebra}}
\newglossaryentry{29}{name={$[x]_{\mathfrak{A}}$},description={Atom von $\mathfrak{A}$, welches $x$ enthält}}
\newglossaryentry{31}{name={$\left\lceil r \right \rceil$},description={$\left\lceil r \right \rceil :=\min\{k\in \mathbb{Z}|\, k\geq r\}$}}
\newglossaryentry{32}{name={$\mathbb{K}$},description={Der Körper $\mathbb{R}$ oder $\mathbb{C}$}}
\begin{document}
	\todo{Danksagung!!!!!, siehe Quelltext}
	\todo{Fettgedruckten Text in Rohfassung abarbeiten}

\setstretch{1.15}
\begin{titlepage}

	{\centering
		{\scshape Universität Leipzig\\
			Fakultät für Mathematik und Informatik\\
			Mathematisches Institut\\
			Professur für Funktionalanalysis\\	 \par}
		\vspace{3cm}
		{\Huge\bfseries {"`Das Rückkehrzeitentheorem\\ von Bourgain"'}\par}
		\vspace{1cm}
		{\Huge Diplomarbeit
			\par}
		
		\vspace{4.5cm}}
	
	{{\Large
	
\noindent \begin{tabular}{p{6cm} l}
	\hspace{-0.25cm}Leipzig, April 2018 & vorgelegt von:\vspace{0.4cm}\\
	~ & Simon Fritzsch\\
	~ & Studiengang Wirtschaftsmathematik
\end{tabular}
}}
	
\vfill{\hspace{-0.4cm}\Large
\textbf{Betreuer:} Prof. Tatjana Eisner, Dr. Konrad Zimmermann}	

\end{titlepage}
 \setstretch{1.12}
\chapter*{Danksagung}
\addcontentsline{toc}{section}{Danksagung}
An dieser Stelle möchte ich all jenen danken, die durch ihre fachliche und persönliche Unterstützung zum Gelingen dieser Diplomarbeit beigetragen haben.\\
\noindent Ein ganz besonderer Dank gebührt Frau Professorin Tatjana Eisner, die mich von der Wahl des Diplomarbeitsthemas bis zum Abschluss dieser Arbeit unterstützte und bei Fragen jederzeit als Ansprechpartnerin zur Verfügung stand. Weiterhin herzlich bedanken möchte ich mich bei Herrn Dr. Zimmermann, für seine Hilfe bei offenen Fragen und die vielen Hinweise zur Verbesserung meiner Diplomarbeit.

	\pagenumbering{Roman}
	\setcounter{tocdepth}{2} 
	\setcounter{secnumdepth}{5} 
	\setstretch{1.03}
	\tableofcontents
		\addcontentsline{toc}{section}{Inhaltsverzeichnis}
	\setstretch{1.12}

	\clearpage
	\glsaddall
	\setglossarystyle{long}
	\setlength\LTleft{0pt}
\setlength\LTright{0pt}
\setlength\glsdescwidth{0.75\hsize}
\printnoidxglossaries

	\chapter{Einleitung}
	\pagenumbering{arabic}
Im Jahr 1931 bewiesen von Neumann den Mittelergodensatz (\cite{Neumann1932}) und Birkhoff den punktweisen Ergodensatz (\cite{Birkhoff1931}) und begründeten damit die Ergodentheorie als eigenständiges mathematisches Forschungsfeld (vgl. \cite[S.4]{Eisner2014}). Während von Neumann Aussagen zur Konvergenz der Ergodenmittel in $L^2$ traf, konnte Birkhoff die folgende Aussage zur punktweisen Konvergenz dieser Mittel beweisen:\vspace{0.2cm}\\
\noindent \textbf{Punktweiser Ergodensatz}. \textit{Seien $(X,\mathfrak{A},\mu,T)$ ein maßtheoretisches dynamisches System und $f\in L^1(X,\mu)$. Dann gilt: $\lim_{N\to\infty}\frac{1}{N}\sum_{n=1}^N f(T^nx) \text{ existiert für }\mu\text{-fast alle }x\in X.$}\vspace{0.2cm}\\ 
\noindent Als Verallgemeinerung dieses Theorems wurde die Fragestellung nach universell guten Folgen für die punktweise Konvergenz aufgeworfen: Seien $\big(n_k\big)_{k=1}^\infty\subset \mathbb{N}$ eine streng ansteigende Folge natürlicher Zahlen sowie $1\leq p \leq \infty$. Dann bezeichnet man $\big(n_k\big)_{k=1}^\infty$ als eine \emph{universell gute Folge} für die punktweise Konvergenz von $L^p$"=Funktionen, falls für ein beliebiges maßtheoretisches dynamisches System $(X,\mathfrak{A},\mu,T)$ und eine beliebige Funktion $f\in L^p(X,\mu)$ der Grenzwert $\lim_{K\to\infty} \frac{1}{K}\sum_{k=1}^K f\big(T^{n_k}x\big)$ für $\mu$"=fast alle $x\in X$ existiert (vgl. \cite[S.309]{Bellow1985}). Der punktweise Ergodensatz besagt dann, dass die natürlichen Zahlen, verstanden als eine aufsteigende Zahlenfolge, universell gut für die punktweise Konvergenz von $L^1$"=Funktionen sind.\\ Als eines der klassischen Resultate in diesem Forschungsgebiet zeigte Jean Bourgain (\textborn 1954) in \cite{Bourgain1989b}, dass der ganzzahlige Teil einer durch ein beliebiges Polynom erzeugten Zahlenfolge universell gut für die punktweise Konvergenz von $L^p$"=Funktionen für $p>1$ ist. Wierdl bewies in \cite{Wierdl1988} eine analoge Aussage für die Folge der Primzahlen. Für eine geraume Zeit blieb die Frage,  welche Aussagen für $p=1$ zutreffend sind, ein offenes Problem. Aufbauend auf \cite{Buczolich2010} zeigte LaVictoire in \cite{LaVictoire2011}, dass die Folge der $d$'ten Potenzen und der Primzahlen universell schlecht für die punktweise Konvergenz von $L^1$"=Funktionen sind. Er konnte sogar zeigen, dass dies für jede Teilfolge dieser Folgen gilt. Dabei bezeichnet $\big(n_k\big)_{k=1}^\infty$ eine \emph{universell schlechte Folge} für die punktweise Konvergenz von $L^1$"=Funktionen, wenn für jedes nicht"=atomare, ergodische maßtheoretische dynamische System $(X,\mathfrak{A},\mu,T)$ eine Funktion $f\in L^1(X,\mu)$ derart existiert, dass die Folge $\big(\frac{1}{K}\sum_{k=1}^K T^{n_k}f\big)_{K=1}^\infty$ auf einer messbaren Teilmenge von $X$ mit positivem Maß divergiert (vgl. \cite[S.241]{LaVictoire2011}).\\
\noindent Eng verwandt mit der Erforschung universell guter Folgen ist die Untersuchung universell guter Gewichte (vgl. \cite[S.308-309]{Bellow1985}):\vspace{0.2cm}\\
\noindent {\textbf{Definition} (Universell gute Gewichte für die punktweise Konvergenz von $L^p$"=Funktionen).\nopagebreak\quad\textit{Sei $1\leq p \leq \infty$. Die Folge $(a_n)_{n=1}^\infty$ ist ein \emph{universell gutes Gewicht} für die punktweise Konvergenz von $L^p$"=Funktionen, wenn für ein beliebiges maßtheoretisches dynamisches System $(Y,\mathfrak{B},\nu,S)$ und eine beliebige Funktion $g\in L^p(Y,\nu)$ gilt: $$\lim_{N\to\infty}\frac{1}{N}\sum_{n=1}^N a_n g(S^ny) \text{ existiert für }\nu\text{-fast alle }y\in Y.$$}}
\noindent \hspace{-0.1cm}Für ein maßtheoretisches dynamisches System $(X,\mathfrak{A},\mu,T)$ lassen sich anstelle deterministischer Gewichte auch die von einer Menge $A\in \mathfrak{A}$\, "`zufällig"' erzeugten Gewichte $\big(\mathbbm{1}_A(T^nx)\big)_{n=1}^\infty,$ $x\in X$ betrachten. Da diese Folge genau dann den Wert $1$ annimmt, wenn $T^nx\in A$ erfüllt ist, bezeichnet man $\big(\mathbbm{1}_A(T^nx)\big)_{n=1}^\infty$ auch als \emph{Folge von Rückkehrzeiten}. Die Fragestellung, ob auf diese Weise erzeugte Gewichte universell gut für die punktweise Konvergenz von $L^p$"=Funktionen sind, wurde erstmalig in \cite{Brunel1966} untersucht (vgl.\cite[S.2]{Assani2017}). Brunel und Keane zeigten in \cite{Brunel1969}, dass die von ihnen eingeführten gleichmäßigen Folgen (\textit{uniform sequences}) universell gute Gewichte für die punktweise und die Norm-Konvergenz von $L^1$"=Funktionen sind. Ryll-Nardzewski verallgemeinerte dieses Resultat  und zeigte in \cite{Ryll1975}, dass die Menge beschränkter Besicovitch-Folgen, eine Menge universell guter Gewichte für die punktweise Konvergenz von $L^1$"=Funktionen ist.\\
\noindent Darauf aufbauend untersuchten Bellow und Losert in \cite{Bellow1985}, welche Eigenschaften von maßtheoretischen dynamischen Systemen sicherstellen, dass Folgen der Form $\big(f(T^nx)\big)_{n=1}^\infty$ universell gute Gewichte für die punktweise Konvergenz von $L^1$"=Funktionen sind. Unter anderem bewiesen sie, dass für einen Operator $T$ mit abzählbarem Lebesgue"=Spektrum und eine Funktion $f\in L^\infty(X,\mu)$ die Folge $\big(f(T^nx)\big)_{n=1}^\infty$ für $\mu$"=fast alle $x\in X$ ein universell gutes Gewicht für die punktweise Konvergenz von $L^1$"=Funktionen ist. Als zentrales Resultat in diesem Forschungsgebiet zeigte Bourgain   mithilfe fourieranalytischer Methoden in \cite{Bourgain1988} das folgende Theorem:\vspace{0.2cm}\\
\noindent \textbf{Rückkehrzeitentheorem.}\quad \textit{Seien $(X, \mathfrak{A}, \mu, T)$ ein ergodisches maßtheoretisches dynamisches System und $A\in \mathfrak{A}$ eine Menge mit $\mu(A)>0$. Dann ist für $\mu$-fast\ alle $x \in X$ die Folge 	 $\big(\mathbbm{1}_A(T^n x)\big)_{n=1}^{\infty}$ 
		ein universell gutes Gewicht für die punktweise Konvergenz von $L^1$"=Funktionen.}\vspace{0.2cm}\\
\noindent Das heißt, es existiert eine Menge $X'\in \mathfrak{A}$ mit $\mu(X')=1$, sodass für $x\in X'$, ein beliebiges weiteres maßtheoretisches dynamisches System $(Y,\mathfrak{B},\nu,S)$ und eine Funktion $g\in L^1$ gilt: $$\lim_{N\to\infty}\frac{1}{N}\sum_{n=1}^\infty \mathbbm{1}_A(T^nx)g(S^ny) \text{ existiert für }\nu\text{-fast alle }y\in Y.$$ \noindent Hervorzuheben ist, dass die Menge $X'\in \mathfrak{A}$ nur vom System $(X,\mathfrak{A},\mu,T)$ und der Funktion $\mathbbm{1}_A\in L^\infty(X,\mu)$, nicht jedoch vom System $(Y,\mathfrak{B},\nu,S)$  oder der Funktion $g\in L^1(Y,\nu)$ abhängt (vgl. \cite[S.23-25]{Assani2014}).  Die Aussagekraft des Rückkehrzeitentheorems ist aus diesem Grund stärker als die des  punktweisen Ergodensatzes angewendet auf das Produktsystem $({X\times Y}, {\mathfrak{A}\times \mathfrak{B}}, {\mu\otimes \nu}, {T\times S})$, da man mithilfe des punktweisen Ergodensatzes nur eine Menge $X'\in \mathfrak{A}$ erhält, welche von $(Y,\mathfrak{B},\nu,S)$ abhängt.\\
\noindent Zusammen mit Furstenberg, Katznelson und Ornstein veröffentlichte Bourgain 1989 einen weiteren Beweis des Rückkehrzeitentheorems (\cite{Bourgain1989}), auf welchen in der Literatur häufig als BFKO"=Beweis Bezug genommen wird. Wir werden diesen Beweis zunächst basierend auf \cite[S.72-99]{Assani2003} für den ergodischen Fall vorstellen und anschließend aufbauend auf eigenen Überlegungen das Rückkehrzeitentheorem in der folgenden, etwas allgemeineren Form zeigen:
	 \begin{mdframed}
	 	\textbf{Rückkehrzeitentheorem von Bourgain.}\quad \textit{Seien $(X, \mathfrak{A}, \mu, T)$ ein maßtheoretisches dynamisches System mit rein atomarer invarianter $\sigma$"=Algebra und $f \in L^\infty(X, \mu)$. 
	Dann ist die Folge $\big(f\left(T^n x\right)\big)_{n=1}^{\infty}$ für $\mu$-fast alle $x \in X$ 
	ein universell gutes Gewicht für die punktweise Konvergenz von $L^1$"=Funktionen.}
\end{mdframed}
Die wesentliche Idee des BFKO"=Beweises ist, Funktionen aus dem Kroneckerfaktor sowie aus dessen orthogonalem Komplement getrennt zu behandeln. Für den Beweis sind weitere Voraussetzungen an das System $(X,\mathfrak{A},\mu,T)$, wie zum Beispiel Ergodizität und Invertierbarkeit, sowie an die Funktion $f\in L^\infty(X,\mu)$ notwendig. Deshalb werden wir das Rückkehrzeitentheorem von Bourgain in den Kapiteln \ref{Kapitel2} und \ref{Kapitel3}  zunächst  in den Spezialfällen Satz \ref{Satz2.2b} und Satz \ref{Satz2.6} beweisen.  Eine Verallgemeinerung der Ergebnisse aus diesen beiden Kapiteln nehmen wir unter Verwendung der ergodischen Zerlegung sowie der  invertierbaren Erweiterung eines maßtheoretischen dynamischen Systems in Kapitel \ref{allgemein} vor. Die Beweise führen wir jeweils in mehreren aufeinander aufbauenden Schritten durch.

 \chapter{Vorbetrachtungen und Konventionen}
 \label{Kapitel2b}
\noindent In diesem Kapitel erläutern wir zunächst einige Konventionen und führen für den Beweis notwendige Resultate ein.\\
\noindent Wir bezeichnen mit $\mathbb{T}$ den Einheitskreis in der komplexen Zahlenebene sowie mit $\mathbbm{1}_A$ die charakteristische Funktion der Menge $A$. Weiterhin werden wir $\delta>0$ und $\epsilon>0$ beziehungsweise $\delta\in \mathbb{R}$ und $\epsilon \in \mathbb{R}$ notieren als $\delta,\epsilon>0$ beziehungsweise $\delta,\epsilon \in \mathbb{R}$. Für eine Abbildung $T$ und eine Menge $A$ bezeichnen wir mit $TA$ das Bild der Menge $A$ unter $T$ sowie mit $T^{-1}A$ das  Urbild von $A$. Weiterhin sei $T^0$ die identische Abbildung.\\
\noindent In dieser Arbeit verstehen wir maßtheoretische dynamische Systeme $(X,\mathfrak{A},\mu,T)$ als Systeme auf einem vollständigen Wahrscheinlichkeitsraum $(X,\mathfrak{A},\mu)$. Dabei heißt ein Maßraum vollständig, wenn jede Teilmenge einer $\mu$"=Nullmenge wieder ein Element der $\sigma$"=Algebra $\mathfrak{A}$ ist. Wir bezeichnen den zur Abbildung $T:X \to X$ gehörenden Koopmanoperator $L^1(X,\mu)\to L^1(X,\mu): \ f\mapsto f\circ T$ ebenfalls mit $T$. Ob es sich bei  $T$ um eine Abbildung $T:X\to X$ oder um den zugehörigen Koopmanoperator $T:L^1(X,\mu)\to L^1(X,\mu)$ handelt, wird jeweils zweifelsfrei aus dem Kontext hervorgehen. Wir weisen darauf hin, dass $L^\infty(X,\mu) \subseteq L^p(X,\mu)$, $1\leq p<\infty$ gilt, da $(X,\mathfrak{A},\mu,T)$ ein Wahrscheinlichkeitsraum ist.\\
\noindent Eine weitere Annahme bezieht sich auf das Verständnis einer Funktion $f$ als Funktion in $L^p$ oder in $\mathcal{L}^p$. Das Rückkehrzeitentheorem von Bourgain ist wie viele weitere punktweise Aussagen in dieser Arbeit für $L^p$"=Funktionen formuliert. Damit wir viele der Beweise sinnvoll führen können, ist es notwendig, für $L^p$"=Funktionen zu Beginn eines Beweises einen aus $\mathcal{L}^p$ stammenden Vertreter der Äquivalenzklasse zu fixieren. Eine explizite Wahl von Vertretern dieser Äquivalenzklassen nehmen wir in dieser Arbeit nicht vor. Dennoch sind punktweise Aussagen dahingehend zu verstehen, dass sie an einem zu Beginn des Beweises gedanklich fixierten Vertreter der jeweiligen Äquivalenzklasse ausgewertet werden. Für einfache Funktionen in $L^\infty(X,\mu)$ wählen wir einen Vertreter aus $\mathcal{L}^\infty(X,\mu)$, welcher nur endlich viele Werte annimmt.
\begin{Bemerkung}
In der Einleitung haben wir universell gute Gewichte für die punktweise Konvergenz von $L^p$"=Funktionen definiert. Gemeint ist damit die punktweise Konvergenz fast überall.
\end{Bemerkung}

\noindent Nachfolgend findet sich eine Zusammenstellung von Definitionen, Hilfsaussagen und Theoremen, welche wir zum Beweis des Rückkehrzeitentheorems benutzen werden. Damit diese Aussagen parallel zum Beweis einfach nachvollzogen werden können, sind sie in der Reihenfolge ihrer erstmaligen Verwendung angeordnet. Außerdem übernehmen wir aus diesem Grund  in der Formulierung vieler Aussagen die Bezeichnungen aus dem jeweiligen Kontext. Um den Umfang dieser Diplomarbeit nicht zu stark auszudehnen, geben wir zu vielen Resultaten lediglich Verweise auf die zugehörigen Beweise in der Literatur an.

\section*{Vorbetrachtungen zu Kapitel 1 und 3}

\begin{Definition}[Invariante $\sigma$"=Algebra]
	\label{invariant}
Sei $(X,\mathfrak{A},\mu,T)$ ein maßtheoretisches dynamisches System. Dann ist die invariante $\sigma$"=Algebra definiert als die $\sigma$"=Algebra $T$"=invarianter Mengen von $\mathfrak{A}$ gegeben durch$$\mathfrak{E}:=\big\{ A\in \mathfrak{A}|\ \mu\big((T^{-1}A) \triangle A\big)=0\big\}.$$
\end{Definition}

\begin{Definition}[Atome einer Maßalgebra, vgl. {\cite[S.65-66]{Elstrodt2011}}] \label{Atom} Sei $(X,\mathfrak{A},\mu)$ ein Maßraum. Eine Menge $A\in \mathfrak{A}$ mit $\mu(A)>0$ heißt ein \emph{$\mu$"=Atom}, wenn für jedes $B\in \mathfrak{A}$ mit $B\subseteq A$ gilt $\mu(B)=0$ oder $\mu(A\setminus B)=0$.\\
	\noindent $(X,\mathfrak{A},\mu)$ heißt \emph{rein  atomar}, wenn eine Menge $I\subseteq \mathbb{N}$ und eine Folge $(A_n)_{n\in I}$ von $\mu$"=Atomen existieren, sodass $\mu\big(\bigcup_{n\in I}A_n\big)=1$ erfüllt ist.
\end{Definition}
\begin{Bemerkung}
	Wir bezeichnen die invariante $\sigma$"=Algebra $\mathfrak{E}$ aus Definition \ref{invariant} als rein atomar, wenn $(X,\mathfrak{E},\mu_{\mid_{\mathfrak{E}}})$ rein atomar ist, wobei $\mu_{\mid_{\mathfrak{E}}}$ die Einschränkung von $\mu$ auf $\mathfrak{E}$ symbolisiere. \end{Bemerkung}

\begin{Definition}[Borel"=Wahrscheinlichkeitsraum]
	\label{BorelWRaum}
	Seien X eine Borel"=Teilmenge eines kompakten metrischen Raumes und $\mu$ ein Wahrscheinlichkeitsmaß, welches auf der Einschränkung der Borelschen $\sigma$"=Algebra auf $X$, bezeichnet durch $\mathfrak{A}$, definiert sei. Der resultierende Wahrscheinlichkeitsraum $(X,\mathfrak{A},\mu)$ ist ein \emph{Borel"=Wahrscheinlichkeitsraum}.
\end{Definition}
\begin{Bemerkung}
	\label{Bemerkung}
	Es ist üblich, die $\sigma$"=Algebra $\mathfrak{A}$ zur $\sigma$"=Algebra $\mathfrak{A}_\mu$ zu vervollständigen. Dies ist die kleinste $\sigma$"=Algebra, welche sowohl $\mathfrak{A}$ als auch alle Teilmengen von $\mu$"=Nullmengen enthält (vgl. \cite[S.411]{Einsiedler2010}). Da wir in dieser Arbeit von vollständigen Wahrscheinlichkeitsräumen ausgehen, ist bei der Annahme von Borel"=Wahrscheinlichkeitsräumen immer deren Vervollständigung gemeint.
\end{Bemerkung}

\begin{Satz}[Mittelergodensatz]
	\label{Mittelergodensatz}
	Seien $(X,\mathfrak{A},\mu,T)$ ein maßtheoretisches dynamisches System sowie $f\in L^2(X,\mu)$. $P_T$ bezeichne die orthogonale Projektion auf den abgeschlossenen linearen Teilraum $I=\big\{g\in L^2(X,\mu)|\ Tg=g\}\subseteq L^2(X,\mu)$. Dann gilt
	$$\lim_{N\to\infty}\Big|\Big|\frac{1}{N}\sum_{n=1}^N f(T^nx)-P_Tf\Big|\Big|_{L^2(X,\mu)}=0.$$
\end{Satz}
\begin{proof}[\textbf{Beweis:}]
Siehe \cite[S.32-34]{Einsiedler2010}.
\end{proof}
\begin{Bemerkung}
Es folgt insbesondere $\int_X f d\mu=\int_X P_Tfd\mu$.
\end{Bemerkung}

\begin{Satz}[Punktweiser Ergodensatz]
	\label{PunktweiserErgodensatz}
	Seien $(X,\mathfrak{A},\mu,T)$ ein maßtheoretisches dynamisches System sowie $f\in L^1(X,\mu)$. Dann konvergiert die Folge $\big( \frac{1}{N}\sum_{n=1}^N f(T^nx)\big)_{N=1}^\infty$ punktweise $\mu$"=fast überall sowie in $L^1(X,\mu)$ gegen eine Fixfunktion $f^*$ von $T$ mit $\int_X f^* d\mu=\int_X fd\mu.$
\end{Satz}

\begin{Bemerkung}
	\noindent Wenn $(X,\mathfrak{A},\mu,T)$ ergodisch ist, gilt $f^*(x)=\int_X fd\mu$ $\mu$"=fast überall. In der Literatur findet man den punktweisen Ergodensatz häufig mit den Mitteln  $\frac{1}{N}\sum_{n=0}^{N-1} f(T^nx)$. Es lässt sich jedoch leicht zeigen, dass die Folge $\big(\frac{1}{N}\sum_{n=0}^{N-1} f(T^nx)\big)_{N=1}^\infty$ genau dann konvergiert, wenn die Folge $\big(\frac{1}{N}\sum_{n=1}^N f(T^nx)\big)_{N=1}^\infty$ konvergiert. 
\end{Bemerkung}

\noindent Die folgenden Aussagen stammen aus \cite[S.25-26]{Assani2003}:

\begin{Definition}[Kroneckerfaktor]
	\label{DefKro}
	Sei $(X,\mathfrak{A}, \mu,T)$ ein maßtheoretisches dynamisches System. Für dieses System ist der \emph{Kroneckerfaktor $\mathcal{K}$} definiert als die in $L^2(X,\mu)$ abgeschlossene lineare Hülle der Eigenfunktionen von $T$.
\end{Definition}
\begin{Bemerkung}
\label{Bemerkung3}
Für eine beliebig gewählte Eigenfunktion $f\in L^2(X,\mu)$ von $T$ zum Eigenwert $\lambda$ gilt
$$T|f|=|Tf|=|\lambda f|=|\lambda||f|=|f|.$$ Also ist $|f|$ eine Fixfunktion von $T$. Wenn  $(X,\mathfrak{A},\mu,T)$ ergodisch ist, ist $|f|$ also $\mu$"=fast überall konstant (vgl. \cite[S.117]{Eisner2014}) und es gilt $f\in L^\infty(X,\mu).$
\end{Bemerkung}

\noindent Für die Projektion einer Funktion $f\in L^2(X,\mu)$ auf $\mathcal{K}$ lässt sich zeigen, dass diese als bedingte Erwartung von $f$ bezüglich einer Unter"=$\sigma$"=Algebra von $\mathfrak{A}$ darstellbar ist. Insbesondere gilt deshalb: Wenn $f$ eine Funktion aus $L^\infty(X,\mu)$ ist, dann sind auch deren Projektionen auf $\mathcal{K}$ beziehungsweise $\mathcal{K}^\bot$ Funktionen aus $L^\infty(X,\mu)$. Folglich existiert für eine beliebige Funktion $f\in L^\infty(X,\mu)$ eine Zerlegung 
\begin{equation}
\label{Kronecker}
f=f_1+f_2,\quad f_1\in\mathcal{K}\cap L^\infty(X,\mu),\ f_2\in \mathcal{K}^\bot \cap L^\infty(X,\mu).
\end{equation} 

\section*{Vorbetrachtungen zu Kapitel \ref{Kapitel3}}
\subsubsection*{Einige Aussagen zu Spektralmaßen}
\begin{Definition}
	\label{Mass}
	$M(\mathbb{T})$ bezeichne die Menge aller endlichen Borel"=Maße auf dem Einheitskreis $\mathbb{T}$.
\end{Definition}
\begin{Definition}[Fourierkoeffizienten eines Maßes, vgl. {\cite[S.35,38]{Katznelson2004}}]
	\label{Fourierkoeffizient}
	Sei $\sigma \in M(\mathbb{T})$. Für $n\in \mathbb{Z}$ ist der \emph{n'te Fourierkoeffizient} $\hat{\sigma}(n)$ von $\sigma$ definiert durch \vspace{-0.1cm}
	$$\hat{\sigma}(n):=\int\limits_{\mathbb{T}}z^{-n}d\sigma.\vspace{-0.1cm}$$
\end{Definition}

\begin{Definition}[Stetigkeit von Maßen, vgl. {\cite[S.44]{Katznelson2004}}]
	Sei $\sigma\in M(\mathbb{T})$. Dann heißt $\sigma$ \emph{stetig}, genau dann wenn $\sigma(\{z\})=0$ für alle $z\in \mathbb{T}$ erfüllt ist.\vspace{-0.3cm}
\end{Definition}

\begin{Satz}[Lemma von Wiener]
	\label{Satz2.3}
	Sei $\sigma \in M(\mathbb{T})$. Dann ist $\sigma$ genau dann stetig,  wenn $$\lim_{N \to \infty} \frac{1}{2N + 1} \sum_{-N}^N|\hat{\sigma}(n)|^2 = 0$$ erfüllt ist.
\end{Satz}
\begin{proof}[\textbf{Beweis:}]
	Siehe \cite[S.45]{Katznelson2004}.
\end{proof}
\noindent Die folgende Formulierung des Spektraltheorems ist aus \cite[S.420]{Einsiedler2010} entnommen:
\begin{Theorem}[Spektralmaß]
	\label{spectral}
	Sei $U$ ein unitärer Operator auf einem komplexen Hilbertraum $H$. Dann existiert für jedes Element $f\in H$ ein eindeutig bestimmtes endliches Borel"=Maß $\sigma_f \in M(\mathbb{T})$, sodass für alle $n\in \mathbb{Z}$
	$$\langle U^nf, f \rangle =\int\limits_{\mathbb{T}}z^n d\sigma_f(z)=\hat{\sigma}_f(-n)$$ erfüllt ist.
\end{Theorem}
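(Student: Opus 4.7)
Der Plan ist, das Maß $\sigma_f$ als Riesz"=Darstellung eines geeigneten positiven linearen Funktionals auf $C(\mathbb{T})$ zu gewinnen. Zunächst definiere ich auf dem Raum $\mathcal{P}$ der trigonometrischen Polynome $p(z)=\sum_{n=-N}^{N}a_n z^n$ das lineare Funktional
$$\Phi(p):=\sum_{n=-N}^{N} a_n \langle U^n f, f\rangle.$$
Der erste zentrale Schritt besteht im Nachweis der Positivität von $\Phi$.

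Ist $p\geq 0$ auf $\mathbb{T}$, so liefert der Satz von Fejér"=Riesz ein trigonometrisches Polynom $q(z)=\sum_{k=0}^M b_k z^k$ mit $p=|q|^2$. Da $U$ unitär ist, gilt $\langle U^{k-l}f,f\rangle = \langle U^k f, U^l f\rangle$, und mit $q(U):=\sum_k b_k U^k$ folgt
$$\Phi(p) = \sum_{k,l=0}^M b_k \overline{b_l}\, \langle U^k f, U^l f\rangle = \Vert q(U)f\Vert^2 \geq 0.$$
Aus der Positivität leitet sich zudem die Stetigkeitsabschätzung $|\Phi(p)|\leq 2\Vert p\Vert_\infty \Vert f\Vert^2$ ab: Für reelle $p$ liefern $\Vert p\Vert_\infty \pm p \geq 0$ sofort $|\Phi(p)|\leq \Vert p\Vert_\infty \Phi(1) = \Vert p\Vert_\infty\Vert f\Vert^2$, und komplexe $p$ werden in Real"= und Imaginärteil zerlegt.

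Im zweiten Schritt nutze ich die Dichtheit von $\mathcal{P}$ in $C(\mathbb{T})$ nach dem Satz von Stone"=Weierstraß, um $\Phi$ eindeutig zu einem positiven, beschränkten linearen Funktional auf $C(\mathbb{T})$ fortzusetzen. Der Darstellungssatz von Riesz"=Markow liefert dann ein eindeutig bestimmtes endliches Borel"=Maß $\sigma_f\in M(\mathbb{T})$ mit $\Phi(g)=\int_{\mathbb{T}} g\, d\sigma_f$ für alle $g\in C(\mathbb{T})$. Einsetzen von $g(z)=z^n$ ergibt die behauptete Identität $\langle U^n f, f\rangle = \int_{\mathbb{T}} z^n\, d\sigma_f(z) = \hat{\sigma}_f(-n)$. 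Die Eindeutigkeit von $\sigma_f$ folgt unmittelbar daraus, dass endliche Borel"=Maße auf $\mathbb{T}$ durch ihre Fourierkoeffizienten festgelegt sind.

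Die Hauptschwierigkeit sehe ich im Nachweis der Positivität, da dieser auf dem nicht"=trivialen Satz von Fejér"=Riesz beruht. Alternativ ließe sich die gesamte Aussage auch auf den Satz von Herglotz zurückführen, welcher positiv"=definite Folgen auf $\mathbb{Z}$ bijektiv zu endlichen positiven Borel"=Maßen auf $\mathbb{T}$ in Beziehung setzt; dabei ist die Positiv"=Definitheit der Folge $(\langle U^n f, f\rangle)_{n\in\mathbb{Z}}$ wegen $\sum_{i,j} a_i \bar{a}_j \langle U^{n_i-n_j}f, f\rangle = \Vert \sum_i a_i U^{n_i}f\Vert^2 \geq 0$ ohne weiteres ersichtlich, sodass die analytische Arbeit hier in den Beweis des Herglotz"=Satzes verlagert würde.
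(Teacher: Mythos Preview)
Your proof is correct. The paper itself does not give an argument here but merely refers to \cite[S.39--40]{Katznelson2004}; the approach found there is precisely the Herglotz route you sketch as an alternative (positive-definiteness of $n\mapsto\langle U^n f,f\rangle$ plus the Herglotz/Bochner representation), so your main argument via Fej\'er--Riesz and Riesz--Markov is a slight variant of, but essentially equivalent to, the cited proof.
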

\begin{proof}[\textbf{Beweis:}]
	Siehe \cite[S.39-40]{Katznelson2004}. \end{proof}
\noindent Weiterführende Informationen zu Spektralmaßen sind in \cite[S.17-19]{Nadkarni1998} zu finden.
\begin{Bemerkung}
	Wenn $(X,\mathfrak{A},\mu,T)$ ein invertierbares maßtheoretisches dynamisches System ist, dann ist der Koopmanoperator $T$ unitär (siehe \cite[S.29]{Einsiedler2010}) und wir können Theorem \ref{spectral} auf den Operator $T$ und eine beliebige Funktion $f$ aus dem Hilbertraum $L^2(X,\mu)$ anwenden.
\end{Bemerkung}
\noindent Dies motiviert die folgende Definition:
\begin{Definition}[Das zu $f$ gehörige Spektralmaß $\mu_f$]
	\label{DefSpek}
	Seien $(X,\mathfrak{A},\mu,T)$ ein invertierbares maßtheoretisches dynamisches System	und $f\in L^2(X,\mu)$. Dann ist das zu $f$ gehörige \emph{Spektralmaß} $\mu_f$ definiert als das eindeutig bestimmte Borel"=Wahrscheinlichkeitsmaß aus Theorem \ref{spectral} mit $\langle T^nf,f\rangle = \hat{\mu}_f(-n) \text{ für alle } n\in \mathbb{Z}$. 
\end{Definition}
\noindent Mithilfe dieser zu Funktionen aus $L^2(X,\mu)$ gehörigen Spektralmaße, können wir $\mathcal{K}^\bot$ spektral charakterisieren.
\begin{Satz}[Spektrale Charakterisierung von $\mathcal{K}^\bot$]
	\label{SpekChar}
	Sei $(X,\mathfrak{A},\mu,T)$ ein invertierbares, ergodisches maßtheoretisches dynamisches System. Dann gilt $f\in \mathcal{K}^\bot$ genau dann, wenn das zu $f$ gehörige Spektralmaß $\mu_f$ stetig ist.
\end{Satz}
\begin{proof}[\textbf{Beweis:}]
	Siehe \cite[S.27-28]{Assani2003}.
\end{proof}
\begin{Satz}
	\label{Satz2.2}
	Seien $(X, \mathfrak{A}, \mu, T)$ ein invertierbares maßtheoretisches dynamisches System sowie $f\in L^\infty(X, \mu)$. Dann sind die folgenden Aussagen äquivalent: 
	\begin{description}
		\item[(i)] Das zu $f$ gehörige Spektralmaß $\mu_f$ ist stetig. 
		\item[(ii)] $\lim\limits_{N\to \infty}\frac{1}{N}\sum\limits_{n=1}^N f(T^nx)\overline{f(T^n\xi)}=0\ \ \mu \otimes\mu$ - fast überall in $X\times X$.
	\end{description}
\end{Satz}

\begin{proof}[\textbf{Beweis:}] $(i) \Rightarrow (ii)$:\\
	Nach dem Mittelergodensatz angewendet auf das Produktsystem $(X \times X, \mathfrak{A}\times \mathfrak{A}, T\times T, \mu \otimes \mu)$ existiert eine Funktion $F \in L^2(X\times X, \mu \otimes \mu)$ derart, dass 
	$$\lim_{N\to \infty}\Big|\Big| \frac{1}{N}\sum_{n=1}^N f(T^nx)\overline{f(T^n\xi)} \ - \ F\Big|\Big|_{L^2(X\times X, \mu \otimes \mu)}=0$$ erfüllt ist.
	Folglich erhalten wir mithilfe des Satzes von Fubini	
	\begin{equation*}
	\label{Gleichung2.14}
	\begin{split}
	\big|\big|F\big|\big|_{L^2(X\times X, \mu \otimes \mu)}^2 &=\lim_{N\to \infty}\bigg|\bigg|\frac{1}{N}\sum_{n=1}^N f(T^nx)\overline{f(T^n\xi)}\bigg|\bigg|_{L^2(X\times X, \mu \otimes \mu)}^2\\
	&=\lim_{N\to\infty}\frac{1}{N^2}\sum_{n=1}^N\sum_{k=1}^N\bigg(\int\limits_X f(T^nx)\overline{f(T^kx)}
	d\mu(x)\bigg)\cdot\bigg(\int\limits_Xf(T^k\xi)\overline{f(T^n\xi})d\mu(y)\bigg)\\
	&=\lim_{N\to\infty}\frac{1}{N^2}\sum_{n=1}^N\sum_{k=1}^N\Big|\left<T^nf, T^kf\right>_{L^2(X,\mu)}\Big|^2\\
	&\hspace{-0.6cm}\overset{T\text{-Invarianz}}{\underset{\text{von }\mu }{=}}\lim_{N\to\infty}\frac{1}{N^2}\sum_{n=1}^N\sum_{k=1}^N\Big|\langle T^{n-k}f,f\rangle_{L^2(X,\mu)}\Big|^2\\
	&\leq\lim_{N\to\infty}\frac{1}{N^2}\sum_{n=1}^N\sum_{k=-N+1}^{N-1}\Big|\underbrace{\left<T^{k}f,f\right>_{L^2(X,\mu)}}_{=\hat{\mu}_f(k)}\Big|^2\\
	&\leq\lim_{N\to\infty}\frac{1}{N}\sum_{k=-N}^N\Big|\hat{\mu}_f(k)\Big|^2.\\
	\end{split}
	\end{equation*}
	\noindent Da $\mu_f$ nach Annahme stetig ist, ist nach Satz \ref{Satz2.3}\vspace{-0.2cm}
	$$\lim_{N\to\infty}\frac{1}{N}\sum_{k=-N}^N |\hat{\mu}_f(k)|^2=0\vspace{-0.1cm}$$ erfüllt.	Folglich gilt nach dem punktweisen Ergodensatz \vspace{-0.2cm}$$\lim_{N\to \infty}\frac{1}{N}\sum_{n=1}^N f(T^nx)\overline{f(T^n\xi)}= F(x,\xi) = 0 \ \mu\otimes\mu \text{-fast überall in }X\times X.$$
	\noindent $(ii) \Rightarrow (i)$: \\
	Sei nun $$\lim_{N\to \infty}\frac{1}{N}\sum_{n=1}^N f(T^nx)\overline{f(T^n\xi)}=0\  \mu \otimes\mu \text{-fast überall in } X\times X$$ erfüllt. Da nach Voraussetzung $f\in L^\infty(X,\mu)$ erfüllt ist, folgt mit dem Satz von der majorisierten Konvergenz

	\begin{equation}
	\begin{split}
	\label{Gleichung2.15}
	0&=\int\limits_{X\times X}\left(\lim_{N\to\infty}\frac{1}{N}\sum_{n=1}^N f(T^nx)\overline{f(T^n\xi)}\right)\cdot \overline{f(x)}f(\xi)d(\mu\otimes\mu)\\
	&\hspace{-0.23cm}\overset{\text{Satz v.}}{\underset{\text{Fubini}}{=}}\lim_{N\to\infty}\frac{1}{N}\sum_{n=1}^N\int\limits_X f(T^nx)\overline{f(x)}d\mu(x)\int\limits_Xf(\xi)\overline{f(T^n\xi)}d\mu(\xi)\\
	&=\lim_{N\to\infty}\frac{1}{N}\sum_{n=1}^N\underbrace{\left<T^nf, f\right>_{L^2(X, \mu)}}_{\hat{\mu}_f(-n)}\cdot\underbrace{\overline{\left<T^nf, f\right>_{L^2(X, \mu)}}}_{\overline{\hat{\mu}_f(-n)}}\\
	&=\lim_{N\to\infty}\frac{1}{N}\sum_{n=1}^N|\hat{\mu}_f(-n)|^2.
	\end{split}
	\end{equation}
	\noindent Nach Definition der Fourierkoeffizienten in Definition \ref{Fourierkoeffizient} gilt $\hat{\mu}_f(n)=\overline{\hat{\mu}_f(-n)}.$ Damit erhalten wir
	$$0=\lim_{N\to\infty}\frac{1}{N}\sum_{n=1}^N|\hat{\mu}_f(-n)|^2=\lim_{N\to\infty}\frac{1}{N}\sum_{n=1}^N|\overline{\hat{\mu}_f(n)}|^2=\lim_{N\to\infty}\frac{1}{2N}\sum_{k=-N}^N|\hat{\mu}_f(k)|^2.$$ Nach Satz \ref{Satz2.3} ist dann das  zu $f$ gehörige Spektralmaß $\mu_f$ stetig.
\end{proof}
\subsubsection*{Der Satz von Jegorow}
\begin{Definition}[Fast gleichmäßige Konvergenz, vgl. {\cite[S.251]{Elstrodt2011}}]
Seien $(Y,\mathfrak{B},\nu)$ ein Maßraum und $(g_n)_{n=1}^\infty$ eine Folge messbarer Funktionen $g_n: Y\to \mathbb{K}$, welche punktweise $\nu$"=fast überall gegen die messbare Funktion $g$ konvergiert. Dann heißt die Folge $(g_n)_{n=1}^\infty$ \emph{fast gleichmäßig konvergent} gegen $g$, wenn zu jedem $\delta >0$ ein $B\in \mathfrak{B}$ mit $\nu(B)>1-\delta$ existiert, sodass $$\lim\limits_{\vphantom{y\in B} n\to\infty}\, \sup\limits_{y\in B} \big|g_n(y)-g(y)\big|=0$$ erfüllt ist. 
\end{Definition}

\begin{Satz}[Satz von Jegorow]
	\label{Jegorow}
Sei $(Y,\mathfrak{B},\nu)$ ein endlicher Maßraum. Weiterhin sei $(g_n)_{n=1}^\infty$ eine Folge messbarer Funktionen $g_n: Y\to \mathbb{K}$, welche punktweise $\nu$"=fast überall gegen die messbare Funktion $g\hspace{-0.05cm}:\hspace{-0.05cm} Y\hspace{-0.1cm}\to \mathbb{K}$ konvergiert. {Dann konvergiert $(g_n)_{n=1}^\infty$ fast gleichmäßig gegen $g$.} 
\end{Satz}
\begin{proof}[\textbf{Beweis:}]
Siehe \cite[S.252]{Elstrodt2011}.
\end{proof}
\begin{Behauptung}
	\label{Behauptung2.7} 
	Seien $(Y,\mathfrak{B},\nu,S)$ ein invertierbares, ergodisches maßtheoretisches dynamisches System. Seien weiterhin $0<\delta\in \mathbb{R}$ sowie $B\in \mathfrak{B}$ mit $\nu(B)>0$. Dann existiert ein $K=K(\delta)\in \mathbb{N}$ derart, dass $\nu\big(\bigcup_{j=1}^K S^jB\big)>1-\delta$ erfüllt ist.
\end{Behauptung} 
\begin{proof}[\textbf{Beweis:}]

	\noindent Wir führen den Beweis indirekt. Sei dazu angenommen,  dass ein $\delta>0$ existiert, sodass für alle $N\in \mathbb{N}$ \ $\nu\left(\bigcup_{j=1}^N S^jB \right)\leq 1-\delta$ erfüllt ist. \\
	\noindent Da die Abbildung $S$ nach Voraussetzung invertierbar ist, gilt $S^jB\in \mathfrak{B}$ für alle $j\in \mathbb{N}$. Weil für $N\leq M$ weiterhin $\bigcup_{j=1}^N S^jB\ \subseteq\ \bigcup_{j=1}^{M}S^jB$ erfüllt ist, gilt
	$$\nu\Big(\bigcup_{j=1}^\infty S^jB\Big)=\lim\limits_{N\to\infty}\,\nu\Big(\bigcup_{j=1}^N S^jB\Big)\leq 1-\delta$$ sowie 
	
	\begin{equation}
	\label{Gleichung2.24}
	\nu\bigg(\underbrace{Y\setminus \Big(\bigcup_{j=1}^\infty S^jB\Big)}_{=:C\in \mathfrak{B}}\bigg) \geq \delta.
	\end{equation}
	
	\noindent Aus der Definition von $C$ in \eqref{Gleichung2.24} folgt $S^{-j}C\cap B=\emptyset$ für alle $j\in \mathbb{N}$, da aus $y\in S^{-j}C$ folgt: $S^jy\in C\subseteq Y\setminus S^jB$. Für die Menge $$\mathfrak{B}\ni D:=\bigcup_{j=1}^\infty S^{-j}C$$ gilt deshalb
	\begin{equation}
	\label{Gleichung2.25}
	D\cap B=\bigcup_{j=1}^\infty S^{-j} C\ \cap B = \emptyset.
	\end{equation}
	
	\noindent Somit ist 
	\begin{equation} 
	\label{Gleichung3.19}
	0<\delta\overset{(\ref{Gleichung2.24})}{\leq}\nu(C)\leq \nu\Big(\bigcup_{j=1}^\infty S^{-j}C\Big)\overset{\text{Def.}}{=}\nu(D)\overset{\eqref{Gleichung2.25}}{\leq} 1-\underbrace{\nu(B)}_{>0}<1
	\end{equation}
	erfüllt. Wegen
	$S^{-1}D \subseteq D$ sowie 
	$\nu(S^{-1}D)=\nu(D)$ gilt
	$$\nu\big((S^{-1} D)\triangle D\big)=0.$$
	$D$ ist also eine $S$"=invariante Menge mod $\nu$, welche wegen \eqref{Gleichung3.19} weder eine Nullmenge ist noch volles Maß hat. Dies ist ein Widerspruch zur Ergodizität des Systems $(Y,\mathfrak{B},\nu,S)$. 
	Deshalb ist die zu Beginn des Beweises getroffene Annahme falsch. \end{proof}

\noindent Aufbauend auf Behauptung \ref{Behauptung2.7} sowie Satz \ref{Jegorow} beweisen wir die folgende Behauptung:
\begin{Behauptung}
	\label{Behauptung2.8}
	Seien $(Y,\mathfrak{B},\nu,S)$ ein invertierbares, ergodisches maßtheoretisches dynamisches System sowie $0<\delta, \delta'\in \mathbb{R}$ und $B\in \mathfrak{B}$ mit $\nu(B)>0$ beliebig. Dann existiert ein $K=K(\delta')\in \mathbb{N}$, sodass für die Funktion $\psi:= \mathbbm{1}_{\bigcup_{j=1}^K S^jB}$ gilt: Es existieren eine Menge $G\in \mathfrak{B}$ mit $\nu(G)>1-\delta$ sowie ein $M_0 \in \mathbb{N}$, sodass für alle $y\in G$ und alle $N>M_0$ 
	\begin{equation}
	\label{Gleichung2.26}
	\bigg|\frac{1}{N}\sum_{n=1}^N \psi(S^ny)-1\bigg|<\frac{\delta '}{4}
	\end{equation}
	erfüllt ist. 
\end{Behauptung}

\begin{proof}[\textbf{Beweis:}]
	Nach Behauptung \ref{Behauptung2.7} existiert ein $K\in \mathbb{N}$, sodass
	\begin{equation}
	\label{Gleichung3.20}
	\nu\big(\bigcup_{j=1}^K S^j B \big)>1-\frac{\delta'}{8}
	\end{equation}
	erfüllt ist. 
	\noindent Da $(Y, \mathfrak{B}, \nu, S)$ ergodisch sowie $\psi$ ein Element aus $ L^1(Y, \nu)$ ist, gilt nach dem punktweisen Ergodensatz für $\nu$-fast alle $y\in Y$ 
	$$\lim\limits_{N\to\infty} \frac{1}{N} \sum_{n=1}^N\psi(S^ny)=\int\limits_Y \psi\  d\nu=\int\limits_{Y}\mathbbm{1}_{\bigcup_{j=1}^K S^jB}\ d\nu = \nu\bigg(\bigcup_{j=1}^K S^jB\bigg)\overset{\eqref{Gleichung3.20}}{>}1-\frac{\delta'}{8}.$$
	\noindent Weil die Funktionenfolge $\Big(\frac{1}{N}\sum_{n=1}^N\psi(S^ny)\Big)_{n=1}^\infty$ $\nu$"=fast überall gegen $\nu\big(\bigcup_{j=1}^K S^j B\big)$ konvergiert, existiert nach Satz \ref{Jegorow} eine Menge $G\in \mathfrak{B}$ mit $\nu(G)>1-\delta$ derart, dass\vspace{-0.1cm} $$\lim\limits_{N\to\infty}\sup\limits_{y\in G} \bigg|\frac{1}{N}\sum_{n=1}^N \psi(S^ny)-\nu\bigg(\bigcup_{j=1}^K S^jB\bigg)\bigg|=0\vspace{-0.1cm}$$ erfüllt ist. Folglich existiert ein $M_0\in \mathbb{N}$, sodass für alle $N>M$ gilt:
	\begin{equation}
	\label{Gleichung2.27}
	\sup\limits_{y\in G}\bigg|\frac{1}{N}\sum_{n=1}^N \psi(S^ny)-\nu\bigg(\bigcup_{j=1}^K S^jB\bigg)\bigg|<\frac{\delta'}{8}.\vspace{-0.3cm}
	\end{equation}
	Deshalb ist für alle $y\in G$ und alle $N >M_0$ 
	\begin{equation*}
	\bigg|\frac{1}{N}\sum_{n=1}^N \psi(S^ny)-1\bigg| \leq \underbrace{\bigg|\frac{1}{N}\sum_{n=1}^N\psi(S^n y)-\nu\bigg(\bigcup_{j=1}^KS^j B\bigg)\bigg|}_{<\frac{\delta'}{8} \text{ wegen (\ref{Gleichung2.27})}} + \underbrace{\bigg|\nu\bigg(\bigcup_{j=1}^KS^j B\bigg)-1 \bigg|}_{<\frac{\delta'}{8} \text{ wegen} \eqref{Gleichung3.20}}<\frac{\delta'}{4}
	\end{equation*}
	erfüllt.
\end{proof}
\begin{Bemerkung}
	Die Behauptung ist für $\frac{\delta'}{4}$ formuliert, um die Wahl der Konstanten am Anfang von Abschnitt \ref{SchrittV} zu vereinfachen. Die Behauptung lässt sich natürlich auch für $\delta'$ anstelle von $\frac{\delta'}{4}$ beweisen.
\end{Bemerkung}
\subsubsection*{Approximation durch einfache Funktionen}
\begin{Behauptung}
	\label{Behauptung2.10}
	Seien $(X,\mathfrak{A},\mu)$ ein Maßraum sowie $f\in L^\infty(X, \mu)$. Dann existiert eine Folge $\big(f_k\big)_{k=1}^\infty\subset L^\infty(X, \mu)$ von einfachen Funktionen, sodass $$\lim\limits_{k\to\infty}||f-f_k||_{L^\infty(X, \mu)}=0$$
	erfüllt ist.
\end{Behauptung}
\begin{proof}[\textbf{Beweis:}]
	Wir betrachten die Zerlegung $f=\operatorname{Re}(f) + i \operatorname{Im}(f)$ von $f$ in Real- und Imaginärteil. $\operatorname{Re}(f)$ und $\operatorname{Im}(f)$ sind messbare Funktionen, für welche $||\operatorname{Re}(f)||_{L^\infty(X, \mu)}\leq ||f||_{L^\infty(X, \mu)}$ sowie $||\operatorname{Im}(f)||_{L^\infty(X, \mu)}\leq||f||_{L^\infty(X, \mu)}$ erfüllt ist. Für $k\in \mathbb{N}$ definieren wir mithilfe der oberen Gaußklammer $N_k:=\left \lceil k \cdot ||f||_{L^\infty(X, \mu)}  \right\rceil$ 
	sowie $f_k^{Re}$ als die einfache Funktion $$f_{k}^{Re}:=\sum\limits_{j=-N_k}^{N_k} \frac{j}{k}\cdot \mathbbm{1}_{\left(\operatorname{Re}(f)\right)^{-1}[\frac{j}{k}, \frac{j+1}{k})}.$$
	Analog dazu definieren wir $f_{k}^{Im}$. Nach Konstruktion gilt $||\operatorname{Re}(f)-f_k^{Re}||_{L^\infty(X, \mu)}\leq \frac{1}{k}$ sowie $||\operatorname{Im}(f)-f_k^{Im}||_{L^\infty(X, \mu)}\leq \frac{1}{k}.$ Für die einfache Funktion $$f_k:=f_k^{Re}+i f_k^{Im} \in L^\infty(X, \mu)$$ ist folglich  
	$||f-f_k||_{L^\infty(X, \mu)} \leq \frac{2}{k}$
	erfüllt. Damit ist die Behauptung bewiesen.
\end{proof}

\section*{Vorbetrachtungen zu Kapitel \ref{allgemein}}
\subsubsection*{Invertierbare Erweiterung und ergodische Zerlegung}
\begin{Satz}[Invertierbare Erweiterung, vgl. {\cite[S.20]{Einsiedler2010}}]
	\label{Satz99}
	Sei $(Y,\mathfrak{B},\nu,S)$ ein maßtheoretisches dynamisches System. Definiere $(\tilde{Y},\tilde{\mathfrak{B}},\tilde{\nu}, \tilde{S})$ durch 	
	\begin{enumerate}
		\item[(1)] $\tilde{Y}=\big\{y\in Y^{\mathbb{Z}}\big| y_{k+1}=Sy_k \text{ für alle } k\in \mathbb{Z} \big\}$,
		\item[(2)] $\big(\tilde{S}y\big)_k=y_{k+1}$ für alle $k\in \mathbb{Z}$ und $y\in \tilde{Y}$ (Linksshift),
		\item[(3)] $\tilde{\nu}(\{y\in \tilde{Y}| y_k \in B \})=\nu(B)$ für alle $B\in \mathfrak{B}$ und $k\in \mathbb{Z}$,
		\item[(4)] $\tilde{\mathfrak{B}}$ die von den Zylindermengen $ \{y\in \tilde{Y}\big| y_k \in B\} \ k\in \mathbb{Z}, B\in \mathfrak{B}$ erzeugte $\sigma$"=Algebra.
	\end{enumerate}
	Dann ist $(\tilde{Y},\tilde{\mathfrak{B}},\tilde{\nu},\tilde{S})$ eine invertierbare Erweiterung des Systems $(Y,\mathfrak{B}, \nu,S).$	Weiterhin gilt: Wenn $(Y,\mathfrak{B},\nu,S)$ ergodisch ist, dann ist auch $(\tilde{Y},\tilde{\mathfrak{B}},\tilde{\nu},\tilde{S})$ ergodisch.
\end{Satz}
\begin{Bemerkung}
In \cite[S.249-254]{Eisner2014} wird ein anderer Zugang gewählt. Die invertierbare Erweiterung wird dort mittels induktiver Limites konstruiert.
\end{Bemerkung}
\noindent Auf den Beweis, dass es sich bei $(\tilde{Y}, \tilde{\mathfrak{B}}, \tilde{\nu}, \tilde{S})$ um ein invertierbares maßtheoretisches dynamisches System sowie bei der Abbildung $$\pi: \tilde{Y}\to Y: y \mapsto y_0$$ um eine Faktorabbildung handelt, werden wir aus Gründen des Umfangs verzichten. Den Zusammenhang veranschaulicht das folgende Diagramm:
\begin{equation*} \begin{tikzcd}
\tilde{Y} \arrow{d}[swap]{\pi} \arrow{r}{\tilde{S}} & \tilde{Y} \arrow{d}{\pi} \\
Y\arrow{r}{S} & Y 
\end{tikzcd}\end{equation*}
Für einen Beweis dafür, dass $(\tilde{Y},\tilde{\mathfrak{B}}, \tilde{\nu},\tilde{S})$ ergodisch ist, wenn $(Y,\mathfrak{B},\nu,S)$ ergodisch ist, verweisen wir auf \cite[S.252-253]{Eisner2014}. 

\begin{Theorem}[Ergodische Zerlegung]
	\label{Zerlegung}
	Sei $S: (Y, \mathfrak{B}, \nu) \to (Y, \mathfrak{B}, \nu)$ eine maßerhaltende Abbildung auf einem Borel"=Wahrscheinlichkeitsraum. Weiterhin bezeichne $M_1(Y,\mathfrak{B})$ die Menge aller Borel"=Wahrscheinlichkeitsmaße auf $(Y,\mathfrak{B})$. Dann gibt es einen Borel"=Wahrscheinlichkeitsraum $(Z, \mathfrak{B}_Z, \lambda)$ und eine messbare Abbildung $Z\to M_1(Y,\mathfrak{B})$, $z \mapsto \nu_z$, sodass  $\nu_z$ für $\lambda$-fast alle $z\in Z$ ein $S$-invariantes, ergodisches Wahrscheinlichkeitsmaß auf $Y$ ist und weiterhin \begin{equation} \label{Gleichung5.1}\int_Y h d\nu = \int_Z \int_Y h d\nu_zd\lambda(z) \text{ für alle } h\in L^\infty(Y,\nu)\end{equation} erfüllt ist.
\end{Theorem}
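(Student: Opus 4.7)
The plan is to realize the ergodic decomposition as a disintegration of $\nu$ with respect to the invariant $\sigma$"=Algebra $\mathfrak{E}$ from Definition \ref{invariant}. Since $(Y,\mathfrak{B},\nu)$ is, up to completion, a Borel probability space in the sense of Definition \ref{BorelWRaum}, the standard disintegration theorem (existence of regular conditional probabilities) applies: there exists a family $\{\nu_y\}_{y\in Y}$ of Borel probability measures on $(Y,\mathfrak{B})$ such that $y\mapsto \nu_y(B)$ is $\mathfrak{E}$"=messbar for each $B\in\mathfrak{B}$, and
\begin{equation*}
\nu_y(B)=E_\nu[\mathbbm{1}_B\mid\mathfrak{E}](y)\quad\text{für $\nu$"=fast alle }y\in Y.
\end{equation*}
Setting $Z:=Y$, $\mathfrak{B}_Z:=\mathfrak{E}$ and $\lambda:=\nu_{\mid_\mathfrak{E}}$, the integral formula \eqref{Gleichung5.1} is then immediate from the tower property: for $h\in L^\infty(Y,\nu)$ one has $\int_Y h\,d\nu_z=E_\nu[h\mid\mathfrak{E}](z)$ $\lambda$"=fast überall, and integrating against $\lambda$ yields $\int_Y h\,d\nu$.

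Next I would establish the $S$"=Invarianz of $\nu_y$ for $\nu$"=fast alle $y$. For fixed $B\in\mathfrak{B}$, the $S$"=Invarianz of $\mathfrak{E}$ and of $\nu$ imply
\begin{equation*}
E_\nu[\mathbbm{1}_{S^{-1}B}\mid\mathfrak{E}]=E_\nu[\mathbbm{1}_B\circ S\mid\mathfrak{E}]=E_\nu[\mathbbm{1}_B\mid\mathfrak{E}]\quad\nu\text{-fast überall},
\end{equation*}
so $\nu_y(S^{-1}B)=\nu_y(B)$ off a $\nu$"=Nullmenge depending on $B$. Choosing a countable Erzeuger von $\mathfrak{B}$, which exists because $Y$ is a Borel"=Teilmenge eines kompakten metrischen Raumes, the exceptional nullsets can be unified, yielding one full"=measure set on which $\nu_y$ is $S$"=invariant.

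For the ergodicity I would argue as follows. For every $A\in\mathfrak{E}$ one has $\nu_y(A)=\mathbbm{1}_A(y)\in\{0,1\}$ for $\nu$"=fast alle $y$. Fixing again a countable Erzeuger $\{A_k\}_{k\in\mathbb{N}}$ von $\mathfrak{E}$ and using the punktweisen Ergodensatz \ref{PunktweiserErgodensatz}, one shows that for $\nu$"=fast alle $y$ and every $B\in\mathfrak{B}$
\begin{equation*}
\lim_{N\to\infty}\frac{1}{N}\sum_{n=1}^N\mathbbm{1}_B(S^n\cdot)=\nu_y(B)\quad\nu_y\text{-fast überall},
\end{equation*}
since the limit is $S$"=invariant, hence $\mathfrak{E}$"=messbar, and on $\mathfrak{E}$ it agrees $\nu$"=fast überall with the constant $\nu_y(B)$. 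This constancy of the ergodic averages characterizes ergodicity of $\nu_y$.

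The main obstacle is the ergodicity step. The issue is that the natural identity $\nu_y(A)\in\{0,1\}$ only holds \emph{separately} for each $A\in\mathfrak{E}$ outside a $\nu$"=Nullmenge that depends on $A$. Upgrading this to a simultaneous statement for all invariant sets of $\nu_y$ at once, hence to genuine Ergodizität of $\nu_y$, requires the countable generation of $\mathfrak{B}$ provided by the Borel"=Struktur, and a careful separation of null sets so that one obtains a single exceptional $\lambda$"=Nullmenge of bad $z\in Z$. The $S$"=Invarianz step is similar but easier, and the integral identity is just the tower property of conditional expectation.
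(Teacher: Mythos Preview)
The paper does not give its own proof of this theorem; it merely cites \cite[S.154--156]{Einsiedler2010}. Your sketch follows precisely the standard route taken there: disintegrate $\nu$ with respect to the invariant $\sigma$"=Algebra $\mathfrak{E}$, verify $S$"=Invarianz of the conditional measures on a countable generator of $\mathfrak{B}$, and obtain ergodicity via Birkhoff averages. So in spirit your approach coincides with the cited reference.

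Two points deserve attention. First, you assume a countable Erzeuger of $\mathfrak{E}$, but the invariant $\sigma$"=Algebra need not be countably generated even when $\mathfrak{B}$ is. The standard argument avoids this: one works only with a countable generating algebra of $\mathfrak{B}$, applies the punktweisen Ergodensatz to its elements, and concludes that for $\nu$"=fast alle $y$ the Birkhoff averages of \emph{every} $h\in L^1(Y,\nu_y)$ converge $\nu_y$"=fast überall to $\int h\,d\nu_y$, which is equivalent to ergodicity of $\nu_y$. Second, the statement asks for $(Z,\mathfrak{B}_Z,\lambda)$ to be a Borel"=Wahrscheinlichkeitsraum in the sense of Definition~\ref{BorelWRaum}; your choice $(Y,\mathfrak{E},\nu_{\mid\mathfrak{E}})$ does not literally satisfy this, since $\mathfrak{E}$ is not the Borel"=$\sigma$"=Algebra on $Y$. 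One remedies this by passing to a suitable quotient or, as in the cited reference, by realizing $Z$ as a Borel subset of the compact metrisable space of probability measures on $Y$.
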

\begin{proof}[\textbf{Beweis:}]
	Siehe \cite[S.154-156]{Einsiedler2010}.
\end{proof}

\subsubsection*{Metrisches Modell}
Zunächst benötigen wir die folgende Definition:
\begin{Definition}[Markov"=Isomorphismus, siehe {\cite[S.217]{Eisner2014}}]
	\label{Markov}
	Seien $(X_1,\mathfrak{B}_1,\mu_1)$ und $(X_2,\mathfrak{B}_2,\mu_2)$ zwei Wahrscheinlichkeitsräume. Ein linearer Operator $$\Phi: L^1(X_1,\mu_1) \to L^1(X_2,\mu_2)$$ ist ein \emph{Markov"=Operator}, falls 
	
	{$$\Phi\geq 0, \quad \Phi\mathbbm{1}_{X_1}=\mathbbm{1}_{X_2}\text{ und} $$
		\begin{equation}
		\label{Gleichung4.16}\int\limits_{X_2}\Phi h \,d\mu_2 = \int\limits_{X_1}h\, d\mu_1 \text{ für alle }h\in L^1(X_1,\mu_1)\end{equation}} \noindent  erfüllt sind. Eine \emph{Markov"=Einbettung} ist ein Markov"=Operator, für welchen zusätzlich \begin{equation}
	\label{Gleichung4.18}
	|\Phi h|=\Phi |h| \text{ für alle }h\in L^1(X_1,\mu_1)\end{equation} gilt. Ein \emph{\textbf{Markov"=Isomorphismus}} ist eine surjektive Markov"=Einbettung.
\end{Definition}	
\noindent Mithilfe dieser Definition können wir nun das folgende Theorem formulieren:
\begin{Theorem} 
	\label{MetrischesModell}
	\noindent Sei $(Y,\mathfrak{B},\nu,S)$ ein maßtheoretisches dynamisches System, sodass $L^1(Y,\nu)$ ein separabler Banachraum ist. Dann existiert ein maßtheoretisches dynamisches System $$(K,\mathfrak{B}_K,\lambda,\tilde{S})$$ mit der Eigenschaft, dass $(K,\mathfrak{B}_K,\lambda)$ ein Borel"=Wahrscheinlichkeitsraum ist. Weiterhin existiert ein Markov-Isomorphismus $\Phi: L^1(Y,\nu) \to L^1(K,\lambda)$, für welchen $$\Phi \circ S = \tilde{S}\circ \Phi$$ für die zu $S$ beziehungsweise $\tilde{S}$ gehörigen Koopmanoperatoren erfüllt ist.
\end{Theorem}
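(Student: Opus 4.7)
The plan is to construct the metric model $(K,\mathfrak{B}_K,\lambda,\tilde{S})$ as the Stone space of a carefully chosen countable, $S^{-1}$-invariant Boolean subalgebra of $\mathfrak{B}$, and then to obtain the Markov-Isomorphismus $\Phi$ as the unique $L^1$-isometrische Fortsetzung of the map that sends the Indikatorfunktion of an algebra element to the Indikatorfunktion of its clopen counterpart in $K$. The role of the separability assumption on $L^1(Y,\nu)$ is precisely to guarantee that such a countable generating algebra exists.

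First I would exploit the separability: the Maßalgebra $(\mathfrak{B}/\mathcal{N},d)$ with $d(A,B):=\nu(A\triangle B)$ is separable via $A\mapsto \mathbbm{1}_A$, so I can choose a countable family $\mathcal{C}_0\subseteq \mathfrak{B}$ whose Klassen $d$-dicht liegen. Setting $\mathcal{A}$ to be the Boolean algebra generated by $\bigcup_{n\geq 0}S^{-n}\mathcal{C}_0$ yields a countable algebra with $S^{-1}\mathcal{A}\subseteq\mathcal{A}$ and $\sigma(\mathcal{A})=\mathfrak{B}$ modulo $\nu$-Nullmengen. Next I let $K$ be the Stone-Raum of $\mathcal{A}$; weil $\mathcal{A}$ abzählbar ist, ist $K$ kompakt, totalverbunden und zweit-abzählbar, also ein kompakter metrischer Raum mit Borel-$\sigma$-Algebra $\mathfrak{B}_K$. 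Jedes $A\in\mathcal{A}$ entspricht einer clopenen Menge $\hat{A}\subseteq K$. Ein Prämaß $\lambda_0$ auf der clopenen Algebra wird durch $\lambda_0(\hat{A}):=\nu(A)$ definiert und mit dem Satz von Caratheodory zu einem Borel-Wahrscheinlichkeitsmaß $\lambda$ auf $\mathfrak{B}_K$ fortgesetzt, vervollständigt im Sinne von Bemerkung \ref{Bemerkung}. Der Endomorphismus $A\mapsto S^{-1}A$ von $\mathcal{A}$ dualisiert per Stone-Dualität zu einer stetigen Abbildung $\tilde{S}:K\to K$ mit $\tilde{S}^{-1}\hat{A}=\widehat{S^{-1}A}$; die $S$-Invarianz von $\nu$ liefert unmittelbar $\tilde{S}_*\lambda=\lambda$.

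Mit $(K,\mathfrak{B}_K,\lambda,\tilde{S})$ in der Hand würde ich $\Phi$ konstruieren, indem ich zunächst $\Phi(\mathbbm{1}_A):=\mathbbm{1}_{\hat{A}}$ für $A\in\mathcal{A}$ erkläre, linear auf die Spannmenge dieser Indikatoren fortsetze und sodann mittels $L^1$-Isometrie (die aus der Identität $\lambda(\hat{A})=\nu(A)$ hervorgeht) auf ganz $L^1(Y,\nu)$ ausdehne. Die Eigenschaften $\Phi\geq 0$, $\Phi\mathbbm{1}_Y=\mathbbm{1}_K$ und die Integralidentität \eqref{Gleichung4.16} ergeben sich direkt auf dem dichten Unterraum und übertragen sich per Stetigkeit. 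Die Markov-Einbettungs-Eigenschaft $|\Phi h|=\Phi|h|$ wird zunächst für reellwertige einfache Funktionen bezüglich einer endlichen Unter-Boole-Algebra von $\mathcal{A}$ verifiziert, wo sie wegen der Disjunktheit der Atome trivial ist, und dann per Dichte fortgesetzt. Die Surjektivität folgt daraus, dass das Bild von $\Phi$ alle clopenen Indikatoren enthält, welche einen dichten Unterraum von $L^1(K,\lambda)$ aufspannen. Die Verflechtungsidentität $\Phi\circ S=\tilde{S}\circ \Phi$ wird auf den erzeugenden Indikatoren durch
\[
\Phi(S\mathbbm{1}_A) = \Phi(\mathbbm{1}_{S^{-1}A}) = \mathbbm{1}_{\widehat{S^{-1}A}} = \mathbbm{1}_{\tilde{S}^{-1}\hat{A}} = \tilde{S}(\mathbbm{1}_{\hat{A}}) = \tilde{S}(\Phi\mathbbm{1}_A)
\]
geprüft und mittels Linearität und Stetigkeit auf ganz $L^1(Y,\nu)$ ausgedehnt.

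Die wesentliche Hürde erwarte ich in der maßtheoretischen Buchführung des Stone-Raum-Schritts: nachzuweisen, dass das Prämaß $\lambda_0$ auf der clopenen Algebra $\sigma$-additiv ist, was letztlich auf der Kompaktheit von $K$ beruht (jede paarweise disjunkte clopene Überdeckung einer clopenen Menge ist automatisch endlich), und anschließend zu verifizieren, dass die Identität $\sigma(\mathcal{A})=\mathfrak{B}$ mod Nullmengen zu einem Isomorphismus der Maßalgebren von $(Y,\mathfrak{B},\nu)$ und $(K,\mathfrak{B}_K,\lambda)$ hochgehoben werden kann. Sobald dies gesichert ist, reduzieren sich alle übrigen Schritte auf Routine-Argumente mittels Dichte und Stetigkeit.
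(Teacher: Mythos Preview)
Your proposal is correct and essentially self-contained, whereas the paper does not give a full proof at all: it explicitly declines to reproduce the construction and instead gives only a \emph{Beweisidee}, citing \cite[S.~215--225]{Eisner2014} for the existence of a metric model when $L^1(Y,\nu)$ is separable and \cite[S.~75]{Eisner2014} for the coincidence of Baire and Borel sets on compact metric spaces. So your write-up already goes well beyond what the paper offers.

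As to the relationship between the two routes: the reference the paper invokes builds the topological model via Gelfand theory for the commutative $C^*$-algebra $L^\infty(Y,\nu)$ (or a suitable separable subalgebra), whereas you work with Stone duality for a countable Boolean subalgebra of $\mathfrak{B}$. These are two faces of the same construction --- the Stone space of your $\mathcal{A}$ is precisely the Gelfand space of the norm-closure of $\operatorname{span}\{\mathbbm{1}_A : A\in\mathcal{A}\}$ in $L^\infty$ --- but your version is more elementary in that it avoids the $C^*$-algebra machinery entirely. What you gain is a concrete, hands-on description of $K$, $\lambda$, $\tilde{S}$ and $\Phi$ in terms of ultrafilters and clopen sets, which makes the verification of the Markov-isomorphism axioms and the intertwining relation transparent. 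What the Gelfand approach in \cite{Eisner2014} buys is a more uniform framework that also handles the operator-theoretic aspects (e.g.\ the lattice-homomorphism property $|\Phi h|=\Phi|h|$) in one stroke. The two hurdles you single out --- $\sigma$-additivity of the premeasure on the clopen algebra (handled by compactness, since any countable disjoint clopen cover of a clopen set is finite) and the lifting of $\sigma(\mathcal{A})=\mathfrak{B}$ mod null sets to a measure-algebra isomorphism --- are exactly the right places to be careful, and your indicated arguments for both are sound.
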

\noindent Aufgrund des Umfangs der diesem Theorem zugrundeliegenden Sätze und Definitionen in \cite[S.215-225]{Eisner2014} verzichten wir in dieser Arbeit auf deren ausführliche Darstellung und geben lediglich eine Beweisidee an:
\begin{proof}[\textbf{Beweisidee:}]
\noindent Nach \cite[S.224]{Eisner2014} existiert für $(Y,\mathfrak{B},\nu,S)$ ein metrisches Modell genau dann, wenn $L^1(Y,\nu)$ ein separabler Banachraum ist. Da $L^1(Y,\nu)$ nach Voraussetzung separabel ist, existiert nach der Definition eines metrischen Modells ein Wahrscheinlichkeitsraum $(K,\mathfrak{B}_K,\lambda)$ sowie ein Markov"=Isomorphismus $\Phi:L^1(Y,\nu)\to L^1(K,\lambda)$, wobei $K$ eine kompakte Menge, $\mathfrak{B}_K$ von einer Metrik erzeugt und $\lambda$ ein Baire"=Wahrscheinlichkeitsmaß ist. Nach \cite[S.75]{Eisner2014} fallen unter diesen Voraussetzungen die Begriffe Baire"= und Borelmengen sowie Baire"= und Borelmaße zusammen. Eine Vervollständigung des Wahrscheinlichkeitsraums $(K,\mathfrak{B}_K,\lambda)$ ergibt deshalb einen Borel"=Wahrscheinlichkeitsraum im Sinne der Definition \ref{BorelWRaum} in Verbindung mit Bemerkung \ref{Bemerkung}.
\end{proof}

\noindent Wir benötigen weiterhin das folgende Hilfsresultat:
\begin{Behauptung}
	\label{Behauptung17}
	Seien $(X_1,\mathfrak{B_1},\mu_1)$ und $(X_2, \mathfrak{B_2},\mu_2)$ zwei Wahrscheinlichkeitsräume sowie $\Phi: L^1(X_1,\mu_1)\to L^1(X_2,\mu_2)$ eine Markov"=Einbettung. Weiterhin sei $(f_n)_{n=1}^\infty\subseteq L^1(X_1,\mu_1)$ eine Folge von Funktionen, welche durch eine Funktion $g\in L^1(X_1,\mu_1)$ majorisiert wird. Dann gilt: $(f_n)_{n=1}^\infty$ konvergiert punktweise $\mu_1$"=fast überall gegen $0$ genau dann, wenn $(\Phi f_n)_{n=1}^\infty$ punktweise $\mu_2$"=fast überall gegen $0$ konvergiert.
\end{Behauptung}
\begin{proof}[\textbf{Beweis:}]
Wegen \eqref{Gleichung4.16} und \eqref{Gleichung4.18} ist $\Phi g$ eine $\mu_2$"=integrierbare Majorante für die Folge $(\Phi f_n)_{n=1}^\infty$. Falls $(f_n)_{n=1}^\infty$ punktweise $\mu_1$"=fast überall gegen $0$ konvergiert, erhalten wir durch zweimalige Anwendung des Satzes von der majorisierten Konvergenz
\begin{equation*}
\begin{split}
0=\lim_{n\to\infty}\int\limits_{X_1}|f_n|d\mu_1
\overset{\eqref{Gleichung4.16}}{=}\lim_{n\to\infty}\int\limits_{X_2}\Phi|f_n|d\mu_2
\overset{\eqref{Gleichung4.18}}{=}\lim_{n\to\infty}\int\limits_{X_2}|\Phi f_n|d\mu_2
=\int\limits_{X_2}\lim_{n\to\infty}|\Phi f_n|d\mu_2.
\end{split}
\end{equation*} Folglich konvergiert $(\Phi f_n)_{n=1}^\infty$ punktweise $\mu_2$"=fast überall gegen $0$.
Die Rückrichtung erhalten wir durch Umkehrung der Gleichungskette.
\end{proof}

\subsubsection*{Separabilität von $L^1$}
\begin{Behauptung}
	\label{Behauptung4}
	Sei $(Y,\mathfrak{B},\nu)$ ein Wahrscheinlichkeitsraum mit abzählbar erzeugter $\sigma$-Algebra $\mathfrak{B}$. Dann ist $L^1(Y,\nu)$ ein separabler Banachraum.
\end{Behauptung}
\begin{proof}[\textbf{Beweis:}]
	\noindent Nach dem Satz von Riesz"=Fischer ist $L^1(Y,\nu)$ ein Banachraum. Es genügt also, eine abzählbare dichte Teilmenge von $L^1(Y,\nu)$ zu finden. \\
	Nach Voraussetzung ist $\mathfrak{B}$ abzählbar erzeugt. Sei der Erzeuger gegeben durch $$C:=\big\{A_k|\ k\in \mathbb{N}\big\}.$$ Es bezeichne $\mathfrak{C}$ die von $C$ erzeugte Algebra. Da $C$ eine abzählbare Menge ist, ist auch $\mathfrak{C}$ abzählbar. Folglich ist auch die Menge  $$F:=\Big\{\sum_{k=1}^N a_k\mathbbm{1}_{C_k}\big| \ C_k\in \mathfrak{C}, a_k\in \mathbb{Q}\cup i \cdot \mathbb{Q}, N\in \mathbb{N}\Big\}$$ von rationalen Treppenfunktionen abzählbar. 
	Sei nun $\epsilon>0$ sowie $h\in L^1(Y,\nu)$ beliebig gewählt. Wir können ohne Beschränkung der Allgemeinheit annehmen, dass $h$ eine nichtnegative reelle Funktion ist. (Sonst können wir Positiv"= und Negativteil von Real"= und Imaginärteil separat betrachten.) Dann existiert eine Treppenfunktion\vspace{-0.1cm} $$h_\epsilon := \sum_{k=1}^N b_k\mathbbm{1}_{B_k}$$\vspace{-0.1cm} mit $0<b_k\in \mathbb{R}, B_k\in \mathfrak{B}$ für $k=1, \dots, N$, sodass \begin{equation}
	\label{Gleichung4.18b}
	||h-h_\epsilon||_{L^1(Y,\nu)}=\int\limits_Y |h-h_\epsilon| d\nu < \frac{\epsilon}{2}
	\end{equation}
	\noindent erfüllt ist. Wähle nun für $k=1, \dots, N$\ \  $0<a_k\in \mathbb{Q}$ so, dass 
	\begin{equation}
	\label{Gleichung4.19}
	|a_k-b_k|<\frac{\epsilon}{4N}
	\end{equation}
	gilt. Da die Algebra $\mathfrak{C}$ den gleichen Erzeuger wie die $\sigma$"=Algebra $\mathfrak{B}$ hat, können wir für $k\in \{1, \dots, N\}$ Mengen $C_k\in \mathfrak{C}$ so wählen, dass \begin{equation}
	\label{Gleichung4.20}
	\nu(C_k\triangle B_k)<\frac{\epsilon}{4a_k\cdot N}
	\end{equation}
	erfüllt ist (siehe \cite[S.452]{Durrett1996}).  Definiere
$\tilde{h}_\epsilon:=\sum_{k=1}^N a_k \mathbbm{1}_{C_k}\in F.$
	Dann gilt 
	\begin{equation*}
	\begin{split}
	||h_\epsilon-\tilde{h}_\epsilon||_{L^1(Y,\nu)}&=\int\limits_Y \Big|\sum_{k=1}^N b_k \mathbbm{1}_{B_k}-\sum_{k=1}^N a_k\mathbbm{1}_{C_k}\Big|d\nu\\
	&\leq \int\limits_Y \sum_{k=1}^N |b_k-a_k|\cdot\mathbbm{1}_{B_k}d\nu + \int\limits_Y\sum_{k=1}^N |a_k|\cdot |\mathbbm{1}_{B_k}-\mathbbm{1}_{C_k}|d\nu\\
	&\leq \sum_{k=1}^N |b_k-a_k| +\sum_{k=1}^N |a_k|\cdot \nu\big(C_k\triangle B_k\big)\\
	&\hspace{-0.2cm}\overset{\eqref{Gleichung4.19}}{\underset{\eqref{Gleichung4.20}}{\leq}}\frac{\epsilon}{2}.
	\end{split}
	\end{equation*}
	Zusammen mit \eqref{Gleichung4.18b} folgt
	$$||h-\tilde{h}_\epsilon||_{L^1(Y,\nu)}\leq ||h-h_\epsilon||_{L^1(Y,\nu)}+||h_\epsilon -\tilde{h}_{\epsilon}||_{L^1(Y,\nu)}\leq\epsilon.$$
	Da wir $\epsilon>0$ beliebig gewählt haben, ist die Behauptung damit bewiesen.
\end{proof}
\begin{Bemerkung}
	\label{Bemerkung5.5}
	Die Behauptung bleibt wahr, wenn wir zur Vervollständigung $(Y,\overline{\mathfrak{B}}, \overline{\nu})$ eines Wahrscheinlichkeitsraumes $(Y,\mathfrak{B},\nu)$ mit abzählbar erzeugter $\sigma$"=Algebra $\mathfrak{B}$ übergehen, da $L^1(Y,\mathfrak{B},\nu)$ isomorph zu $L^1(Y,\overline{\mathfrak{B}},\overline{\nu})$ ist. Der Isomorphismus $\phi: L^1(Y,\mathfrak{B},\nu)\to L^1(Y,\overline{\mathfrak{B}},\overline{\nu})$ ist gegeben durch $L^1(Y,\mathfrak{B},\nu)\ni[h]_{\mathfrak{B}}\mapsto [h]_{\overline{\mathfrak{B}}}\in L^1(Y,\overline{\mathfrak{B}},\overline{\nu}).$ Das heißt, die Äquivalenzklasse $[h]_{\mathfrak{B}}$ wird abgebildet auf die Äquivalenzklasse $[h]_{\overline{\mathfrak{B}}}$ in $L^1(Y,\overline{\mathfrak{B}},\overline{\nu})$. Die Wohldefiniertheit, Injektivität, Linearität sowie Stetigkeit von $\phi$ lassen sich leicht zeigen, ein Beweis für die Surjektivität der Abbildung sowie für die Stetigkeit der Umkehrabbildung entfällt aus Gründen des Umfangs. \end{Bemerkung}
\subsubsection*{Zerlegung in Atome}
\begin{Behauptung}
	\label{Behauptung9}
	Sei $(X,\mathfrak{A},\mu)$ ein Maßraum, sodass $L^\infty(X,\mu)$ separabel ist. Dann ist $(X,\mathfrak{A},\mu)$ rein atomar.
\end{Behauptung}
\begin{proof}[\textbf{Beweis:}]
Wir nehmen an, $(X,\mathfrak{A},\mu)$ wäre nicht rein atomar. Dann existiert eine Menge $A\in \mathfrak{A}$ mit $\mu(A)>0$, welche keine $\mu$"=Atome enthält.\\ 
\noindent Nach \cite[S.56]{Bogacev2007} existiert  für jedes $\alpha\in [0,\mu(A)]$ eine Menge $A_\alpha\in \mathfrak{A}$, für welche $\mu(A_\alpha)=\alpha$ erfüllt ist. Wir wählen für jedes $\alpha\in [0, \mu(A)]$ eine solche Menge $A_\alpha$ und definieren $$f_\alpha:=\mathbbm{1}_{A_\alpha}\in L^\infty(X,\mu).$$ Für beliebige $\alpha_1, \alpha_2\in [0,\mu(A)],\, \alpha_1\neq \alpha_2$ unterscheiden sich die Funktionen $f_{\alpha_1}$ und $f_{\alpha_2}$ auf einer Menge mit positivem Maß. Folglich gilt $||f_{\alpha_1}-f_{\alpha_2}||_{L^\infty(X,\mu)}=1.$ Da die Menge $[0, \mu(A)]$ überabzählbar ist, ist  $L^\infty(X,\mu) \supseteq \{f_\alpha| \ \alpha\in [0,\mu(A)]\} $ nicht separabel. Dies steht im Widerspruch zur Voraussetzung. Folglich ist die Annahme, $(X,\mathfrak{A},\mu)$ ist nicht  rein atomar, falsch und die Behauptung damit bewiesen.
\end{proof}

\begin{Bemerkung}
Die umgekehrte Beweisrichtung gilt im Allgemeinen nicht.
\begin{proof}[\textbf{Beweis:}]
Sei $(X,\mathfrak{A},\mu)$ ein rein atomarer Maßraum mit abzählbar unendlich vielen paarweise disjunkten $\mu$"=Atomen $A_n\in \mathfrak{A}$, $n\in \mathbb{N}$. Seien $I,J\subseteq \mathbb{N}$, $I\neq J$ zwei Teilmengen der natürlichen Zahlen. Dann gilt für die Funktionen $f:=\sum_{n\in J} \mathbbm{1}_{A_n}\in L^\infty(X,\mu)$ und $g:=\sum_{n\in I} \mathbbm{1}_{A_n}\in L^\infty(X,\mu)$\vspace{-0.2cm} $$||f-g||_{L^\infty(X,\mu)}=1.$$ Da die natürlichen Zahlen überabzählbar viele Teilmengen enthalten, ist $L^\infty(X,\mu)$ folglich nicht separabel.
\end{proof} 
\end{Bemerkung}

\noindent Wir betrachten nun wieder maßtheoretische dynamische Systeme $(X,\mathfrak{A},\mu,T)$ und untersuchen Maße, welche wir durch Einschränkung  von $\mu$ auf bestimmte Teilmengen von $X$ erhalten.
\begin{Behauptung}
	\label{Behauptung16}
	Seien $(X,\mathfrak{A},\mu,T)$ ein maßtheoretisches dynamisches System und $A\in \mathfrak{A}$ ein $T$-invariantes $\mu$"=Atom. Dann ist das Maß $\mu_A$ gegeben durch \vspace{-0.15cm}$$\mu_A(B):=\frac{\mu(B\cap A)}{\mu(A)},\quad B\in \mathfrak{A}$$ ein $T$-invariantes, ergodisches Wahrscheinlichkeitsmaß.
\end{Behauptung}
\begin{proof}[\textbf{Beweis:}]
Da die Menge $A$ nach der Definition von $\mu$-Atomen positives Maß hat, ist die Abbildung $\mu_A: \mathfrak{A}\to [0,1]$ wohldefiniert und offensichtlich ein Wahrscheinlichkeitsmaß. Wir zeigen zunächst, dass $\mu_A$ $T$-invariant ist. Sei dazu eine beliebige Menge $B\in \mathfrak{A}$ gegeben. Da $A$ $T$-invariant ist, gilt $\mu\big(A \triangle (T^{-1}A)\big)=0$. Damit erhalten wir\vspace{-0.15cm}
$$\mu_A\big(T^{-1}B\big)\hspace{-0.09cm}=\hspace{-0.09cm}\frac{\mu\big((T^{-1}B) \cap A\big)}{\mu(A)}\hspace{-0.09cm}=\hspace{-0.09cm}\frac{\mu\big((T^{-1}B)\cap (T^{-1}A)\big)}{\mu(A)}\hspace{-0.09cm}=\hspace{-0.09cm}\frac{\mu\big(T^{-1}(B\cap A)\big)}{\mu(A)}\hspace{-0.09cm}=\hspace{-0.09cm}\frac{\mu\big(B\cap A\big)}{\mu(A)}\hspace{-0.09cm}=\hspace{-0.09cm}\mu_A(B).$$ \noindent Folglich ist $\mu_A$ $T$-invariant. \noindent Da $A$ ein $\mu$"=Atom ist, nimmt  $\mu_A$ nur die Werte $0$ und $1$ an. Das Maß $\mu_A$ ist also trivialerweise ergodisch.
\end{proof}

\subsubsection*{Approximation in $L^\infty$}
\begin{Behauptung}
	\label{Lemma9}
	Seien $(Y,\mathfrak{B},\nu,S)$ ein maßtheoretisches dynamisches System und $g\in L^1(Y,\nu)$. Für $k\in \mathbb{N}$ sei $g_k \in L^\infty(Y,\nu)$ gegeben durch $g_k:=g\cdot \mathbbm{1}_{|g|\leq k}.$ Dann gilt
	\begin{equation*}
	\label{Gleichung4.32}
	\lim\limits_{k\to\infty}\lim\limits_{N\to\infty}\frac{1}{N}\sum_{n=1}^N |g-g_k|(S^ny)=0 \text{ für }\nu-\text{fast alle } y\in Y.
	\end{equation*}
\end{Behauptung}
\begin{proof}[\textbf{Beweis:}]
	Nach Definition der Folge $(g_k)_{k=1}^\infty $ konvergiert diese punktweise $\nu$"=fast überall gegen die Funktion $g$ und nach dem Satz von der majorisierten Konvergenz ist \begin{equation}
	\label{Gleichung2.16d}\vspace{-0.2cm}
	0=\int\limits_Y \lim\limits_{k\to\infty} |g-g_k|d\nu=\lim\limits_{k\to\infty}\int\limits_Y |g-g_k|d\nu=\lim\limits_{k\to\infty}||g-g_k||_{L^1(Y, \nu)} \end{equation} erfüllt. Nach dem punktweisen Ergodensatz existiert für beliebiges $k\in \mathbb{N}$ eine Funktion $g_k^*\in L^1(Y, \nu)$, sodass \begin{equation} \label{Gleichung2.17c}\lim_{N\to\infty}\frac{1}{N}\sum_{n=1}^N |g-g_k|(S^ny)=g_k^*(y) \text{ für }\nu\text{-fast alle } y\in Y \end{equation} sowie 
	\begin{equation}
	\label{Gleichung2.17b}
	\int\limits_Y |g-g_k|d\nu=\int\limits_Y g_k^*d\nu
	\end{equation}
	gelten. Nach Definition der Folge $\big(g_k\big)_{k=1}^\infty$ gilt für alle $k,l\in \mathbb{N}$ mit $k\geq l$ und $\nu$-fast alle $y\in Y$ $$0\leq |g-g_k|(y)\leq|g-g_l|(y)$$ und folglich wegen \eqref{Gleichung2.17c}  $0\leq g_k^*(y)\leq g_l^*(y).$ Deshalb ist die Funktion 
	$L^1(Y, \nu)\ni g^*:=\lim\limits_{k\to\infty}g_k^*$ wohldefiniert und erfüllt nach dem Satz über die monotone Konvergenz 
	\begin{equation}
	\label{Gleichung4.36}
	0\leq \int\limits_Y g^*d\nu=\int\limits_Y \lim\limits_{k\to\infty} g_k^*\, d\nu=\lim\limits_{k\to\infty}\int\limits_Yg_k^*\,d\nu\overset{\eqref{Gleichung2.17b}}{=}\lim_{k\to\infty}\int\limits_Y|g-g_k|d\nu\overset{\eqref{Gleichung2.16d}}{=}0.
	\end{equation}
	Nach Definition der Funktionen $g_k^*$, $k\in \mathbb{N}$ in \eqref{Gleichung2.17c} sind diese  $\nu$"=fast überall nicht"=negativ. Zusammen mit \eqref{Gleichung4.36}  folgt, dass $g^*(y)=0$ für $\nu$"=fast alle $y\in Y$ erfüllt ist. 
	Damit erhalten wir für $\nu$"=fast alle $y\in Y$\vspace{-0.cm} $$0=g^*(y)=\lim\limits_{k\to\infty} g_k^*(y)\overset{\eqref{Gleichung2.17c}}{=}\lim\limits_{k\to\infty}\lim\limits_{N\to\infty}\frac{1}{N}\sum\limits_{n=1}^N\big|g-g_k|(S^ny).\vspace{-1cm}$$
\end{proof}
\noindent Wir verfügen nun über die Hilfsmittel, um in den folgenden drei Kapiteln den Beweis des Rückkehrzeitentheorems von Bourgain zu führen.

	\chapter{Beweis für den Kroneckerfaktor}
	\label{Kapitel2}
\noindent In diesem Kapitel beweisen wir in drei Schritten den folgenden Satz:
\begin{Satz}
	\label{Satz2.2b}
	Seien $(X, \mathfrak{A}, \mu, T)$ ein ergodisches maßtheoretisches dynamisches System und $f \in \mathcal{K}$. 	Dann ist für $\mu$-fast alle $x \in X$ die Folge 	 $\big( f(T^n x)\big)_{n=1}^{\infty}$ 
	ein universell gutes Gewicht für die punktweise Konvergenz von $L^\infty$"=Funktionen.
\end{Satz}

\noindent Dazu beweisen wir den Satz in Schritt I zunächst für Eigenfunktionen von $T$ und in Schritt II für Linearkombinationen dieser Eigenfunktionen. Mithilfe eines Approximationsargument führen wir in Schritt III schließlich den Beweis für Funktionen aus dem Kroneckerfaktor $\mathcal{K}$.

\section{Schritt I: Eigenfunktionen}
	\label{Eigenfunktion}
	\begin{Proposition}
		\label{Lemma2.1}
		
		Seien $(X, \mathfrak{A}, \mu, T)$ ein  maßtheoretisches dynamisches System und $f \in L^1(X, \mu)$ eine Eigenfunktion von $T$.	Dann ist für $\mu$-fast alle $x \in X$ die Folge 	 $\left( f\left(T^n x\right)\right)_{n=1}^{\infty}$ 
		ein universell gutes Gewicht für die punktweise Konvergenz von $L^1$"=Funktionen.
		\end{Proposition}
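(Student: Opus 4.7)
The plan is to exploit the very rigid structure of eigenfunctions of the Koopman operator. If $f\in L^1(X,\mu)$ is an eigenfunction with eigenvalue $\lambda$, then, because the Koopman operator is an $L^1$"=Isometrie, we have $|\lambda|\cdot\|f\|_{L^1}=\|Tf\|_{L^1}=\|f\|_{L^1}$, so (assuming $f\not\equiv 0$, the other case being trivial) $|\lambda|=1$. Fixing a $\mathcal{L}^1$-Vertreter of $f$ on which $f(Tx)=\lambda f(x)$ for $\mu$"=fast alle $x$, and using the $T$-Invarianz von $\mu$, the set
\[
X':=\bigcap_{n\in\mathbb{N}}\bigl\{x\in X:\,f(T^n x)=\lambda^n f(x)\bigr\}
\]
has $\mu$"=Maß $1$. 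This is the candidate full-measure set on which I would verify the Rückkehrzeiten-Eigenschaft.

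For any $x\in X'$, any weiteres maßtheoretisches dynamisches System $(Y,\mathfrak{B},\nu,S)$ and any $g\in L^1(Y,\nu)$, the weighted average factors as
\[
\frac{1}{N}\sum_{n=1}^N f(T^n x)\,g(S^n y)\;=\;f(x)\cdot\frac{1}{N}\sum_{n=1}^N \lambda^n g(S^n y).
\]
Thus the Proposition reduces to the claim that $\frac{1}{N}\sum_{n=1}^N \lambda^n g(S^n y)$ converges für $\nu$"=fast alle $y\in Y$, for every choice of $(Y,\mathfrak{B},\nu,S)$ and $g$. Crucially, this statement no longer involves $x$ at all, which is exactly the universality we need; the $x$"=dependent factor $f(x)$ is just a scalar.

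To handle the $\lambda$-weighted average, I would apply the Punktweisen Ergodensatz (Satz \ref{PunktweiserErgodensatz}) to the Produktsystem $(\mathbb{T}\times Y,\ \mathcal{B}(\mathbb{T})\otimes\mathfrak{B},\ \lambda_{\mathbb{T}}\otimes\nu,\ R_\lambda\times S)$, where $R_\lambda$ die Multiplikation mit $\lambda$ auf $\mathbb{T}$ bezeichnet; da $|\lambda|=1$, ist $R_\lambda$ maßerhaltend bezüglich $\lambda_{\mathbb{T}}$. Consider the integrable function $G(z,y):=z\cdot g(y)$. Then
\[
\frac{1}{N}\sum_{n=1}^N G\bigl((R_\lambda\times S)^n(z,y)\bigr)\;=\;z\cdot\frac{1}{N}\sum_{n=1}^N \lambda^n g(S^n y),
\]
and by Satz \ref{PunktweiserErgodensatz} the left-hand side converges $(\lambda_{\mathbb{T}}\otimes\nu)$"=fast überall.

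The last step, which I regard as the main (though mild) technical point, is the passage von fast überall"=Konvergenz auf $\mathbb{T}\times Y$ zu $\nu$"=fast überall"=Konvergenz auf $Y$. By Fubini, there is a full $\nu$"=measure set $Y'\subseteq Y$ such that for every $y\in Y'$ the slice $\{z\in\mathbb{T}:\text{Konvergenz}\}$ has full $\lambda_{\mathbb{T}}$"=Maß; in particular this slice contains some $z_0\neq 0$. Division by $z_0$ then yields convergence of $\frac{1}{N}\sum_{n=1}^N\lambda^n g(S^n y)$, which combined with the factorization above finishes the proof. Note that no Ergodizität is required, neither of $T$ nor of $S$; all the eigenfunction structure is already exploited in the first step, and Birkhoff together with Fubini handles the remainder.
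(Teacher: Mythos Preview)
Your proof is correct and follows essentially the same approach as the paper: both reduce to the product system $(\mathbb{T}\times Y,\ R_\lambda\times S)$ with the function $(z,y)\mapsto z\,g(y)$, apply Birkhoff, and use Fubini to descend to $Y$. The only cosmetic difference is that the paper observes directly that the convergence set is a product set $\mathbb{T}\times Y'$ (since the expression factors as $z$ times something independent of $z$), whereas you pick a nonzero $z_0$ from the full-measure slice; both arguments are valid and equivalent.
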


	\begin{proof}[\textbf{Beweis:}]
		  Da $f$ nach Voraussetzung eine Eigenfunktion von $T$ ist, existiert ein $\lambda\in \mathbb{T}$, sodass $Tf=f\circ T=\lambda f$ erfüllt ist. Folglich existiert eine Menge		 $\mathcal{N}_{f} \in \mathfrak{A}$ mit $\mu(\mathcal{N}_{f})=0$, sodass für alle $x\in X\setminus\mathcal{N}_f$ \vspace{-0.15cm}$$Tf(x)=\lambda f(x)$$ gilt.
		 Seien nun ein beliebiges maßtheoretisches dynamisches System $(Y,\mathfrak{B}, \nu, S)$ und eine Funktion $ g \in L^1(Y,\nu) $ gegeben.
		Wir betrachten das Produktsystem \vspace{-0.15cm}$$\big(\mathbb{T}\times Y, \mathbb{B}_{\mathbb{T}}\times \mathfrak{B}, \lambda_{\mathbb{T}}\otimes \nu, \phi_{\lambda}\big),$$ wobei $\mathbb{B}_{{\mathbb{T}}}$ die Borelsche $\sigma$-Algebra auf dem Einheitskreis ${\mathbb{T}}$, $\lambda_{{\mathbb{T}}}$ das Lebesgue-Maß auf ${\mathbb{T}}$ (Bildmaß des Lebesgue-Maßes auf $[0,1]\subset \mathbb{R}$ unter $t \mapsto e^{2\pi it}$) sowie  $\phi_{\lambda }$ die messbare Abbildung $\phi_{\lambda }: {\mathbb{T}}\times Y \to {\mathbb{T}}\times Y$ mit \vspace{-0.15cm}$$\phi_{\lambda }(z,y):= (\lambda z, Sy)$$ bezeichne. 
		Weiterhin sei $\tilde{g} \in L^1({\mathbb{T}}\times Y, \lambda_{{\mathbb{T}}}\otimes \nu )$ definiert als \vspace{-0.15cm}$$\tilde{g}(z,y):= z \cdot g(y).$$
		\noindent Aufgrund des punktweisen Ergodensatzes existiert für $(\lambda_{{\mathbb{T}}}\otimes\nu)$-fast alle $(z,y)\in {\mathbb{T}}\times Y$ der Grenzwert 	
		\begin{equation*}
		\label{Gleichung2.1}
		\lim_{N\to \infty}{\frac{1}{N} \sum_{n=1}^{N}\tilde{g}\big({\phi_{\lambda }}^n(z,y)\big)}= \lim_{N\to\infty}\frac{1}{N}\sum_{n=1}^{N}\lambda ^nzg(S^ny)=z\cdot \lim_{N\to\infty}\frac{1}{N}\sum_{n=1}^{N} \lambda ^n g(S^n y).	\end{equation*}
		Da die Konvergenz nicht  von $z\in{\mathbb{T}}$ abhängt, konvergiert die Folge $\big(\frac{1}{N}\sum_{n=1}^{N} \lambda^ng(S^n y)\big)_{N=1}^\infty$ auf einer Produktmenge $\mathbb{T}\times Y'$ mit $Y'\in \mathfrak{B}$. Da die Menge $\mathbb{T}\times Y'$ Maß $1$ hat, folgt nach dem Satz von Fubini:
		\begin{equation} \label{Gleichung1}  \lim_{N\to\infty}\frac{1}{N}\sum_{n=1}^{N} \lambda^ng(S^n y) \text{ existiert für } \nu\text{-fast alle } y\in Y.\end{equation} 		
		 Es folgt für alle $x\in X\setminus \mathcal{N}_f$:
		\begin{equation*}
		\lim_{N\to\infty}\frac{1}{N}\sum_{n=1}^{N}f(T^nx)g(S^ny) 
		=\lim_{N\to\infty}f(x)\frac{1}{N}\sum_{n=1}^{N}{\lambda}^n g(S^ny) \text{ existiert für } \nu"=\text{fast alle }y\in Y.
		\end{equation*}
	Da die Menge $\mathcal{N}_f$ nicht vom System $(Y,\mathfrak{B},\nu,S)$ und der Funktion $g\in L^1(Y,\nu)$ abhängt, ist die Proposition damit bewiesen.
	\end{proof}

\section{Schritt II: Linearkombinationen von Eigenfunktionen}
\begin{Proposition}
	\label{Behauptung3.3}
	Seien $(X, \mathfrak{A}, \mu, T)$ ein maßtheoretisches dynamisches System und $f\in L^1(X,\mu)$ eine Linearkombination von Eigenfunktionen von $T$. Dann ist für $\mu$-fast alle $x \in X$ die Folge 	 $\big( f(T^n x)\big)_{n=1}^{\infty}$ 
ein universell gutes Gewicht für die punktweise Konvergenz von $L^\infty$"=Funktionen.	
\end{Proposition}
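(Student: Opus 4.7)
Der Beweisplan ist, die in Proposition \ref{Lemma2.1} gezeigte Aussage für einzelne Eigenfunktionen mittels Linearität auf endliche Linearkombinationen zu übertragen. Nach Voraussetzung lässt sich $f\in L^1(X,\mu)$ als endliche Summe $f=\sum_{i=1}^k c_i f_i$ mit $c_i\in \mathbb{K}$ und Eigenfunktionen $f_i\in L^1(X,\mu)$ von $T$ darstellen.

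Zunächst würde ich Proposition \ref{Lemma2.1} getrennt auf jede einzelne Eigenfunktion $f_i$ anwenden. Für jedes $i\in \{1,\dots,k\}$ erhalte ich damit eine $\mu$"=Nullmenge $\mathcal{N}_{f_i}\in \mathfrak{A}$, sodass für jedes $x\in X\setminus \mathcal{N}_{f_i}$ die Folge $\big(f_i(T^nx)\big)_{n=1}^\infty$ ein universell gutes Gewicht für die punktweise Konvergenz von $L^1$"=Funktionen ist. Die endliche Vereinigung $\mathcal{N}:=\bigcup_{i=1}^k \mathcal{N}_{f_i}\in \mathfrak{A}$ ist wieder eine $\mu$"=Nullmenge und hängt weder vom Testsystem $(Y,\mathfrak{B},\nu,S)$ noch von einer konkreten Funktion $g$ ab.

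Im zweiten Schritt würde ich für festes $x\in X\setminus \mathcal{N}$, ein beliebiges maßtheoretisches dynamisches System $(Y,\mathfrak{B},\nu,S)$ sowie eine Funktion $g\in L^\infty(Y,\nu)\subseteq L^1(Y,\nu)$ aus Proposition \ref{Lemma2.1} für jedes $i\in \{1,\dots,k\}$ eine $\nu$"=Nullmenge $\mathcal{N}_i^{x,Y,g}\in \mathfrak{B}$ gewinnen, sodass für jedes $y\in Y\setminus \mathcal{N}_i^{x,Y,g}$ der Grenzwert $\lim_{N\to\infty}\tfrac{1}{N}\sum_{n=1}^N f_i(T^nx)g(S^ny)$ existiert. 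Für $y$ außerhalb der endlichen Vereinigung $\bigcup_{i=1}^k \mathcal{N}_i^{x,Y,g}$ würde ich dann durch Linearität der Ergodenmittel schließen, dass
$$\lim_{N\to\infty}\frac{1}{N}\sum_{n=1}^N f(T^nx)g(S^ny)=\sum_{i=1}^k c_i\cdot \lim_{N\to\infty}\frac{1}{N}\sum_{n=1}^N f_i(T^nx)g(S^ny)$$
existiert, da die rechte Seite eine endliche Summe existierender Grenzwerte ist.

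Ernsthafte Hindernisse erwarte ich bei diesem Beweis nicht. Das gesamte Argument stützt sich ausschließlich darauf, dass endliche Vereinigungen von Nullmengen wieder Nullmengen sind und dass die Grenzwertbildung für endliche Linearkombinationen mit der Summation vertauscht. Der entscheidende Punkt ist lediglich, dass die Ausnahmenullmenge $\mathcal{N}\subset X$ unabhängig vom Testsystem $(Y,\mathfrak{B},\nu,S)$ und der Funktion $g$ gewählt werden kann; dies überträgt sich unmittelbar von Proposition \ref{Lemma2.1}, die diese Unabhängigkeit für einzelne Eigenfunktionen bereits liefert.
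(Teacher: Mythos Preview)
Your proposal is correct and follows essentially the same approach as the paper: apply Proposition~\ref{Lemma2.1} to each eigenfunction separately, take the finite union of the exceptional $\mu$-null sets to obtain a system-independent full-measure set in $X$, and then for fixed $x$ combine the finitely many $\nu$-null sets in $Y$ and use linearity of the averages. The paper's argument is identical in structure and detail.
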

\noindent \begin{proof}[\textbf{Beweis:}]
Da $f$ nach Voraussetzung eine Linearkombination von Eigenfunktionen von $T$ ist, existieren ein $\kappa \in \mathbb{N}$, Koeffizienten $\mathbb{C}\ni c_k,\ k=1, \dots, \kappa$ und Eigenfunktionen von $T$ $L^1(X,\mu)\ni h_k,\ k=1, \dots, \kappa$, sodass sich $f$ darstellen lässt als 
\begin{equation}
\label{Gleichung3.5}
f=\sum_{k=1}^\kappa c_k h_k.
\end{equation} 
Nach Proposition \ref{Lemma2.1} ist für $k\in \{1, \dots, \kappa \}$ die Folge $\big(h_k(T^nx)\big)_{n=1}^\infty$ für $\mu$"=fast alle $x\in X$ ein universell gutes Gewicht für die punktweise Konvergenz von $L^1$"=Funktionen. Da eine endliche Vereinigung von $\mu$"=Nullmengen wieder eine $\mu$"=Nullmenge ist, existiert folglich eine Menge $X'\in \mathfrak{A}$ mit $\mu(X')=1$, sodass die Folge $\big(h_k(T^nx)\big)_{n=1}^\infty$ für alle $x\in X'$ und alle $k\in \{1, \dots, \kappa \}$ ein universell gutes Gewicht für die punktweise Konvergenz von $L^1$"=Funktionen ist.\\
Seien nun ein weiteres maßtheoretisches dynamisches System $(Y,\mathfrak{B},\nu,S)$ und eine Funktion $g\in L^1(Y,\nu)$ beliebig gewählt sowie ein $x\in X'$ fixiert. Nach Definition der Menge $X'$ gilt für ein beliebiges $k\in \{1, \dots, \kappa \}$:

\begin{equation}
\label{Gleichung2.5}
\lim\limits_{N\to\infty}\frac{1}{N}\sum_{n=1}^{N}h_k(T^nx)g(S^ny) \text{ existiert für }\nu\text{-fast alle }y\in Y. 
\end{equation}
Für jedes $k \in \{1, \dots, \kappa\}$ existiert eine $\nu$"=Nullmenge, sodass für alle $y$ außerhalb dieser Nullmenge der Grenzwert in \eqref{Gleichung2.5}  existiert. Da die endliche Vereinigung von $\nu$"=Nullmengen eine $\nu$"=Nullmenge ist, existiert also eine Menge 
$Y'\in \mathfrak{B}$ mit $\nu(Y')=1$, sodass der Grenzwert $\lim_{N\to\infty}\frac{1}{N}\sum_{n=1}^Nh_k(T^nx)g(S^ny)$ für alle $y\in Y'$ und alle $k\in \{1, \dots, \kappa \}$ existiert. 

\noindent Damit existiert für alle  $y\in Y'$ der Grenzwert 
 \begin{equation*}
 \lim_{N\to\infty}\frac{1}{N}\sum_{n=1}^{N}{f}(T^nx)g(S^ny).
 \end{equation*}
 Da wir $x\in X'$ beliebig gewählt haben, ist die Proposition damit bewiesen.\end{proof}
 
\section{Schritt III: Funktionen aus dem Kroneckerfaktor}
\label{SchrittIIIb}
\todo{Den neuen Teil nochmal auf Richtigkeit überprüfen.}
Um den Satz \ref{Satz2.2b}  beweisen zu können, benötigen wir das folgende Hilfsresultat:
\begin{Lemma}
	\label{Behauptung42}
Seien $(X,\mathfrak{A},\mu,T)$ ein maßtheoretisches dynamisches System und $\big(f_k\big)_{k=1}^\infty \subset L^2(X,\mu)$ eine Folge von Funktionen, für welche $\lim_{k\to\infty}||f-f_k||_{L^2(X,\mu)}=0$ für ein ${f\in L^2(X,\mu)}$ gilt. Weiterhin existiere ein $x\in X$, sodass \vspace{-0.15cm} $$\lim_{N\to\infty} \frac{1}{N}\sum_{n=1}^N|f-f_k|(T^nx)=\int\limits_X|f-f_k|d\mu \text{ für alle } k\in \mathbb{N}$$  erfüllt ist. Sei $\epsilon>0$. Dann existieren Zahlen $k_0, N_{k_0}\in \mathbb{N}$, sodass für alle $N\geq N_{k_0}$\vspace{-0.15cm}
$$0\leq \frac{1}{N}\sum_{n=1}^N|f-f_{k_0}|(T^nx)<\epsilon$$ gilt.
\end{Lemma}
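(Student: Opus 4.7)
The plan is to exploit the inclusion $L^2(X,\mu)\subseteq L^1(X,\mu)$, which holds since $(X,\mathfrak{A},\mu)$ is a probability space, in order to transfer the $L^2$-convergence of $(f_k)_{k=1}^\infty$ to $L^1$-convergence, and then to apply the pointwise hypothesis to a single, well-chosen index $k_0$.

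First I would note that by the Cauchy-Schwarz inequality (or Jensen's inequality applied to $|\cdot|^2$), one has
\[
\|f-f_k\|_{L^1(X,\mu)}\ \leq\ \|f-f_k\|_{L^2(X,\mu)}\cdot \|\mathbbm{1}_X\|_{L^2(X,\mu)}\ =\ \|f-f_k\|_{L^2(X,\mu)}
\]
for every $k\in \mathbb{N}$. Combined with the hypothesis $\lim_{k\to\infty}\|f-f_k\|_{L^2(X,\mu)}=0$, this yields
\[
\lim_{k\to\infty}\int\limits_X|f-f_k|\,d\mu\ =\ 0.
\]
Hence I can choose $k_0\in \mathbb{N}$ sufficiently large so that $\int_X|f-f_{k_0}|\,d\mu<\tfrac{\epsilon}{2}$.

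With this $k_0$ fixed, I would invoke the pointwise hypothesis at the point $x$: for this particular $k_0$ it asserts
\[
\lim_{N\to\infty}\frac{1}{N}\sum_{n=1}^N|f-f_{k_0}|(T^nx)\ =\ \int\limits_X|f-f_{k_0}|\,d\mu\ <\ \frac{\epsilon}{2}.
\]
By definition of the limit, there exists $N_{k_0}\in \mathbb{N}$ such that for every $N\geq N_{k_0}$
\[
\bigg|\frac{1}{N}\sum_{n=1}^N|f-f_{k_0}|(T^nx)\ -\ \int\limits_X|f-f_{k_0}|\,d\mu\bigg|\ <\ \frac{\epsilon}{2}.
\]
Combining the two estimates via the triangle inequality gives the desired bound $\tfrac{1}{N}\sum_{n=1}^N|f-f_{k_0}|(T^nx)<\epsilon$, and non-negativity of the averages is automatic from the integrand $|f-f_{k_0}|\geq 0$.

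There is essentially no serious obstacle here: the argument is a standard "$\epsilon/2+\epsilon/2$" reduction that first selects $k_0$ using the $L^2$-to-$L^1$ step and then selects $N_{k_0}$ from the given pointwise convergence. The only point that deserves a brief remark in the write-up is the use of the finiteness of $\mu$ to guarantee the inclusion $L^2(X,\mu)\subseteq L^1(X,\mu)$ with the explicit norm bound above; this is precisely the probability-space assumption made in Kapitel \ref{Kapitel2b}.
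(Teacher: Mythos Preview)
Your proof is correct and follows essentially the same approach as the paper: both use Cauchy--Schwarz to pass from $L^2$-convergence to $L^1$-convergence, then choose $k_0$ with $\int_X|f-f_{k_0}|\,d\mu<\epsilon/2$, and finally use the pointwise hypothesis at $x$ to obtain $N_{k_0}$ via the same $\epsilon/2+\epsilon/2$ triangle-inequality estimate.
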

\begin{proof}[\textbf{Beweis:}]
Da $(X,\mathfrak{A},\mu)$ ein Wahrscheinlichkeitsraum ist, gilt nach der Cauchy-Schwarzschen Ungleichung $$\lim_{k\to\infty}\int\limits_X |f-f_k|d\mu\leq \lim_{k\to\infty}||f-f_k||_{L^2(X,\mu)}=0.$$ Also existieren ein $k_0\in \mathbb{N}$ mit $$\int\limits_X |f-f_{k_0}|d\mu<\frac{\epsilon}{2}$$ sowie nach Voraussetzung ein $N_{k_0}\in \mathbb{N}$, sodass für alle $N\geq N_{k_0}$ $$\bigg|\frac{1}{N}\sum_{n=1}^N |f-f_{k_0}|(T^nx)-\int\limits_X|f-f_{k_0}|d\mu\,\bigg|<\frac{\epsilon}{2}$$ erfüllt ist. Folglich gilt für alle $N\geq N_{k_0}$: $$0\leq \frac{1}{N}\sum_{n=1}^N |f-f_{k_0}|(T^nx)\leq \bigg|\frac{1}{N}\sum_{n=1}^N |f-f_{k_0}|(T^nx)-\int\limits_X |f-f_{k_0}|d\mu\,\bigg|+\int\limits_X|f-f_{k_0}|d\mu<\epsilon.\vspace{-0.7cm}$$
\end{proof}

\noindent Wir zeigen nun:\\
\noindent \textbf{Satz \ref{Satz2.2b}.}\ \textit{	Seien $(X, \mathfrak{A}, \mu, T)$ ein ergodisches maßtheoretisches dynamisches System und $f \in \mathcal{K}$. 	Dann ist für $\mu$-fast alle $x \in X$ die Folge 	 $\big( f(T^n x)\big)_{n=1}^{\infty}$ 
	ein universell gutes Gewicht für die punktweise Konvergenz von $L^\infty$"=Funktionen.}

\noindent \begin{proof}[\textbf{Beweis:}]
Es bezeichne $$\mathcal{E}:= \big\{h \in L^{2}(X, \mu)\big|\ Th = \lambda h  \text{ für ein}\ \lambda \in \mathbb{T} 
\big\}\overset{\text{Bem.}}{\underset{\ref{Bemerkung3}}{=}}\big\{h \in L^{\infty}(X, \mu)\big|\ Th = \lambda h  \text{ für ein}\ \lambda \in \mathbb{T} 
\big\}$$ die Menge der Eigenfunktionen von $T$.
Nach Definition des Kroneckerfaktors (Definition \ref{DefKro}) gilt
$$\mathcal{K} = \overline{\text{span}\big(\{h|h \in \mathcal{E}\}\big)}^{L^2(X,\mu)}. $$  Da  nach Voraussetzung ${f} \in \mathcal{K}$ erfüllt ist, existiert nach Bemerkung \ref{Bemerkung3} eine Folge $\big({f_k}\big)_{k=1}^\infty\subset L^\infty(X,\mu)$ von Linearkombinationen von Eigenfunktionen von $T$
mit 	
\begin{equation}
\label{Gleichung_Konvergenz}
\lim_{k\to\infty} ||f-f_k||_{L^2(X,\mu)}=0.
\end{equation}

\noindent Nach Proposition $\ref{Behauptung3.3}$ existiert für jede Funktion $f_k, \ k\in \mathbb{N}$ eine Menge $\mathcal{N}_{k} \in \mathfrak{A}$ mit $\mu(\mathcal{N}_{k})=0,$ sodass die Folge $\big(f_k(T^nx)\big)_{n=1}^\infty$ für alle $x\in X\setminus \mathcal{N}_{k}$  ein universell gutes Gewicht für die punktweise Konvergenz von $L^1$"=Funktionen und damit auch für die punktweise Konvergenz von $L^\infty$"=Funktionen ist. Da das System $(X,\mathfrak{A},\mu,T)$ ergodisch ist, existieren für $k\in \mathbb{N}$ nach dem punktweisen Ergodensatz weiterhin Mengen $ \hat{\mathcal{N}}_{k}\in \mathfrak{A}$ mit $\mu(\hat{\mathcal{N}}_{k})=0,$ sodass für alle $x\in X\setminus \hat{\mathcal{N}}_{k}$\vspace{-0.08cm} 
\begin{equation}
\label{Gleichung3.9}
\lim_{N\to\infty}\frac{1}{N}\sum_{n=1}^N \big|f-f_k\big|(T^nx)=\int\limits_X\big|f-f_k\big|d\mu
\end{equation}
erfüllt ist. 
Definiere nun $X'\in \mathfrak{A}$ durch \vspace{-0.08cm}
\begin{equation} \label{Gleichung2.2b}X':=\Big(\bigcap_{k=1}^\infty(X\setminus \mathcal{N}_{k})\Big)\cap \Big(\bigcap_{k=1}^\infty (X\setminus\hat{\mathcal{N}}_{k})\Big).\end{equation}
Dann gilt $\mu(X')=1.$\\
\noindent Seien nun ein beliebiges maßtheoretisches dynamisches System $(Y,\mathfrak{B}, \nu, S)$, eine Funktion $ g \in L^\infty(Y,\nu) $ sowie ein $x\in X'$ gegeben.\\
Nach Definition von $X'$ ist die Folge $\big(f_k(T^nx)\big)_{n=1}^\infty$ für alle $k\in \mathbb{N}$ ein universell gutes Gewicht für die punktweise Konvergenz von $L^\infty$"=Funktionen. Deshalb existiert für alle $k\in \mathbb{N}$ der Grenzwert $$\lim_{N\to\infty} \frac{1}{N}\sum_{n=1}^N f_k(T^nx)g(S^ny)$$ für $\nu$"=fast alle $y\in Y$. Da die abzählbare Vereinigung von $\nu$"=Nullmengen eine $\nu$"=Nullmenge ist, existiert folglich eine Menge $Y'\in \mathfrak{B}$ mit $\nu(Y')=1$, sodass für alle $y\in Y'$ gilt:
\begin{equation} \label{Gleichung3.11} \lim_{N\to\infty}\frac{1}{N}\sum_{n=1}^N f_k(T^nx)g(S^ny) \text{ existiert für alle }k\in \mathbb{N}\end{equation} und 
$$|g(S^ny)|\leq ||g||_{L^\infty(Y,\nu)} \text{ für alle }n\in \mathbb{N}.$$
Seien nun ein $y\in Y'$ sowie ein $\epsilon>0$ fixiert. Für $N,M,k\in \mathbb{N}$ gilt
\begin{equation}
\label{Gleichung3}
\begin{split}
0 &\leq \left|\frac{1}{N}\sum_{n=1}^{N}{f}(T^nx)g(S^ny)-\frac{1}{M}\sum_{n=1}^{M}{f}(T^n x)g(S^n y)\right| \\
&\leq \left| \frac{1}{N}\sum_{n=1}^{N}\left({f}-{f_k}\right)(T^nx)g(S^ny)\right|
+ \left|\frac{1}{M}\sum_{n=1}^{M}\left({f}-{f_k}\right)(T^nx)g(S^ny)\right|\\
&+ \left|\frac{1}{N}\sum_{n=1}^{N}{f_k}(T^nx)g(S^ny) - \frac{1}{M}\sum_{n=1}^{M}{f_k}(T^nx)g(S^ny)\right|\\
&\leq\underbrace{\frac{1}{N}\sum_{n=1}^{N}\left|{f}-{f_k}\right|(T^nx)||g||_{L^\infty(Y, \nu)}}_{(I)}+\underbrace{\frac{1}{M}\sum_{n=1}^{M}\left|{f}-{f_k}\right|(T^nx)||g||_{L^\infty(Y, \nu)}}_{(II)}\\
&+\underbrace{\left|\frac{1}{N}\sum_{n=1}^N{f_k}(T^nx)g(S^ny) - \frac{1}{M}\sum_{n=1}^{M}{f_k}(T^nx)g(S^ny)\right|}_{(III)}.
\end{split}
\end{equation}
\noindent \textbf{Zu I und II:} Nach Wahl der Folge $\big(f_k\big)_{k=1}^\infty$ sowie von $x\in X'$ sind die Voraussetzungen von Lemma \ref{Behauptung42} erfüllt. Folglich existieren Zahlen $k_0, N_{k_0}\in \mathbb{N}$, sodass für alle $N,M\geq N_{k_0}$ 
\begin{equation}
\label{Gleichung3.17b}
\frac{1}{N}\sum_{n=1}^N |f-f_{k_0}|(T^nx)||g||_{L^\infty(Y,\nu)}<\frac{\epsilon}{3}
\end{equation}
sowie 
\begin{equation}
\label{Gleichung3.18b}
\frac{1}{M}\sum_{n=1}^M |f-f_{k_0}|(T^nx)||g||_{L^\infty(Y,\nu)}<\frac{\epsilon}{3}
\end{equation}
erfüllt sind.\\

\noindent \textbf{Zu III:}
Nach Wahl von $y\in Y'$ und \eqref{Gleichung3.11} sowie nach Wahl von $x\in X'$ in \eqref{Gleichung2.2b}  ist die  Folge  $\big(\frac{1}{N}\sum_{n=1}^{N}{f}_{k_0}(T^nx)g(S^ny)\big)_{N=1}^\infty$ konvergent und bildet deshalb eine Cauchy-Folge. Folglich existiert ein $N_0$ sodass für alle $N,M \geq N_0$ gilt:

\begin{equation}
\label{Gleichung2.11}
\left|\frac{1}{N}\sum_{n=1}^{N}{f}_{k_0}(T^nx)g(S^ny)\ - \ \frac{1}{M}\sum_{n=1}^{M}{f}_{k_0}(T^nx)g(S^ny)\right|\
<\frac{\epsilon}{3}.
\end{equation}

\noindent Zusammengenommen gilt also in \eqref{Gleichung3}  für alle $N,M \geq \max\{N_{k_0}, N_0\}$ und $k=k_0$:

\begin{equation*}
\label{Gleichung2.12}
0 \leq \left|\frac{1}{N}\sum_{n=1}^{N}{f}(T^nx)g(S^ny)-\frac{1}{M}\sum_{n=1}^{M}{f}(T^n x)g(S^n y)\right|< \frac{\epsilon}{3}+\frac{\epsilon}{3}+\frac{\epsilon}{3}=\epsilon.
\end{equation*}
\noindent Da wir $\epsilon>0$ beliebig gewählt haben, ist $$\Big(\frac{1}{N}\sum_{n=1}^{N}{f}(T^nx)g(S^ny)\Big)_{N=1}^\infty$$ eine Cauchyfolge und der Grenzwert
\begin{equation*}
\lim_{N\to \infty}\frac{1}{N}\sum_{n=1}^{N}f(T^nx)g(S^ny)
\end{equation*} existiert. Damit ist der Satz \ref{Satz2.2b} bewiesen. \end{proof}

\noindent 
\begin{Bemerkung} Analog zu Abschnitt \ref{SchrittVII} könnten wir unter der zusätzlichen Annahme $f\in \mathcal{K}\cap L^\infty(X,\mu)$ bereits an dieser Stelle den Beweis dafür führen, dass die in Satz \ref{Satz2.2b} definierten Gewichte universell gute Gewichte für die punktweise Konvergenz von $L^1$"=Funktionen sind. \end{Bemerkung}

\chapter{Beweis für das orthogonale Komplement des Kroneckerfaktors}	
\label{Kapitel3}
\noindent In diesem Kapitel führen wir den Beweis des Rückkehrzeitentheorems von Bourgain für $f\in \mathcal{K}^\bot\cap L^\infty(X,\mu)$ für den Fall, dass $(X,\mathfrak{A},\mu,T)$  und $(Y,\mathfrak{B},\nu,S)$ invertierbare, ergodische maßtheoretische dynamische Systeme sind sowie $f\in L^\infty(X,\mu)$ und $g\in L^\infty(Y,\nu)$ gewählt sind. Wie in \cite[S.43]{Bourgain1989} treffen wir zunächst die Annahme, dass die Funktion $f\in \mathcal{K}^\bot \cap L^\infty(X,\mu)$ einfach sei, das heißt ein endliches Bild habe. Eine Verallgemeinerung dieses Ergebnisses auf beliebige Funktionen $f\in \mathcal{K}^\bot \cap L^\infty(X,\mu)$ nehmen wir im Abschnitt \ref{unendlich} vor. \vspace{-0.15cm}

\section{Beweis für einfache Funktionen $f\in  \mathcal{K}^\bot\cap L^\infty(X,\mu)$}
\label{endlich}
\begin{Satz}
	\label{Satz2.5}
	Seien $(X, \mathfrak{A}, \mu, T)$  und $(Y, \mathfrak{B}, \nu, S)$ invertierbare, ergodische maßtheoretische dynamische Systeme.  Des Weiteren seien $f\in \mathcal{K}^\bot\cap L^\infty(X, \mu)$ eine einfache Funktion und $g \in L^\infty(Y, \nu)$. Dann existiert eine Menge $X'\in \mathfrak{A}$ mit $\mu(X')=1$, welche unabhängig vom System $(Y,\mathfrak{B},\nu, S)$ und $g\in L^\infty(Y,\nu)$ gewählt werden kann, sodass für alle $x\in X'$\vspace{-0.15cm}
	$$\lim_{N\to\infty}\frac{1}{N}\sum_{n=1}^N f(T^nx)g(S^ny)= 0 \text{ für }\nu-\text{fast alle } y\in Y$$ erfüllt ist.
\end{Satz}

\noindent Der hier vorgestellte Beweis beruht auf der Darstellung in \cite[S.73-92]{Assani2003}. Zunächst konstruieren wir die Menge $X'$ und beweisen, dass diese Maß $1$ hat. Anschließend zeigen wir in drei Schritten, dass die Gewichte $\big(f(T^nx)\big)_{n=1}^\infty,\ x\in X'$ die in Satz \ref{Satz2.5} formulierte Eigenschaft erfüllen.

\subsection{Vorbetrachtungen}
\label{Vorbetrachtungen2}
Um die Menge $X'$ aus Satz \ref{Satz2.5} konstruieren zu können, benötigen wir die folgenden beiden Lemmata:
\begin{Lemma}
\label{Behauptung3.1}
Seien $(X,\mathfrak{A},\mu,T)$ ein invertierbares, ergodisches maßtheoretisches dynamisches System. Darüber hinaus seien eine Funktion $f\in \mathcal{K}^\bot\cap L^\infty(X,\mu)$ sowie die Menge $X_1$ definiert als 
\begin{equation}
\label{X1}
X_1:=\bigg\{x\in X\Big| \lim_{N\to \infty}\frac{1}{N} \sum_{n=1}^N f(T^nx)\overline{f(T^n\xi)}=0 \ \text{für}\ \mu \text{-fast alle } \xi\in X \bigg\}
\end{equation}
gegeben. Dann gilt $X_1\in \mathfrak{A}$ sowie
$$\mu(X_1)=1.$$
\end{Lemma}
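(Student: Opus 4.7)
Der Plan ist zunächst, die beiden bereitgestellten spektralen Hilfsmittel zu kombinieren: Aus der Voraussetzung $f\in \mathcal{K}^\bot$ sowie der Invertierbarkeit und Ergodizität des Systems $(X,\mathfrak{A},\mu,T)$ folgt nach Satz \ref{SpekChar}, dass das zu $f$ gehörige Spektralmaß $\mu_f$ stetig ist. Da zusätzlich $f\in L^\infty(X,\mu)$ vorausgesetzt werden kann (nach der Einordnung dieser Behauptung ist dies durch $\mathcal{K}^\bot\cap L^\infty$ implizit gegeben; die reine Stetigkeit von $\mu_f$ genügt ohnehin für die verwendete Implikation), liefert die Richtung $(i)\Rightarrow(ii)$ von Satz \ref{Satz2.2} unmittelbar
$$\lim_{N\to\infty}\frac{1}{N}\sum_{n=1}^N f(T^nx)\overline{f(T^n\xi)}=0\quad \mu\otimes\mu\text{-fast überall in }X\times X.$$

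Im zweiten Schritt betrachte ich die Menge
$$A:=\Big\{(x,\xi)\in X\times X\ \Big|\ \lim_{N\to\infty}\frac{1}{N}\sum_{n=1}^N f(T^nx)\overline{f(T^n\xi)}=0\Big\}.$$
Die Funktionen $F_N(x,\xi):=\frac{1}{N}\sum_{n=1}^N f(T^nx)\overline{f(T^n\xi)}$ sind als endliche Summen von Produkten messbarer Funktionen bezüglich der Produkt"=$\sigma$"=Algebra $\mathfrak{A}\otimes \mathfrak{A}$ messbar, sodass auch $A$ als Menge des Verschwindens der Grenzfolge messbar ist. Die zuvor hergeleitete Konvergenz besagt dann gerade $(\mu\otimes\mu)(A)=1$.

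Im dritten Schritt wende ich den Satz von Fubini an: Für jedes $x\in X$ gehört der Schnitt $A_x:=\{\xi\in X\mid (x,\xi)\in A\}$ zu $\mathfrak{A}$, die Abbildung $x\mapsto \mu(A_x)$ ist messbar, und es gilt
$$1=(\mu\otimes\mu)(A)=\int\limits_X \mu(A_x)\, d\mu(x).$$
Wegen $\mu(A_x)\leq 1$ für alle $x\in X$ folgt $\mu(A_x)=1$ für $\mu$"=fast alle $x\in X$. Nach Definition von $X_1$ ist aber $x\in X_1$ genau dann erfüllt, wenn $\mu(A_x)=1$ gilt; folglich ist $X_1=\{x\in X\mid \mu(A_x)=1\}\in \mathfrak{A}$ und $\mu(X_1)=1$.

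Eine inhaltliche Schwierigkeit sehe ich bei dieser Aussage nicht, da die wesentliche analytische Arbeit bereits von Satz \ref{SpekChar} und Satz \ref{Satz2.2} geleistet wird. Der einzige Punkt, den man etwas sorgfältiger ausführen sollte, ist die Messbarkeit von $X_1$ bzw. der Abbildung $x\mapsto \mu(A_x)$; diese folgt jedoch ohne Umweg aus dem Satz von Fubini, angewendet auf die messbare Menge $A\subseteq X\times X$.
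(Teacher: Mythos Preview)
Dein Beweis ist korrekt und folgt im Wesentlichen demselben Weg wie die Arbeit: beide kombinieren Satz~\ref{SpekChar} mit Satz~\ref{Satz2.2}, um die $\mu\otimes\mu$"=fast"=überall"=Konvergenz zu erhalten, und schließen dann mittels Fubini. Deine Begründung für $X_1\in\mathfrak{A}$ über die Messbarkeit der Abbildung $x\mapsto \mu(A_x)$ ist dabei sogar etwas direkter als der Verweis auf die Vollständigkeit des Maßraums in der Arbeit.
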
 

\begin{proof}[\textbf{Beweis:}]
Nach Voraussetzung gilt $f\in \mathcal{K}^\bot\cap L^\infty(X,\mu)$. Wegen Satz \ref{SpekChar} ist das zu $f$ gehörige Spektralmaß $\mu_f$ stetig. Zusammen mit Satz \ref{Satz2.2} erhalten wir deshalb\vspace{-0.15cm} 
$$\lim\limits_{N\to \infty}\frac{1}{N}\sum\limits_{n=1}^N f(T^nx)\overline{f(T^n\xi)}=0\  \mu \otimes\mu \text{-fast überall in } X\times X.$$
Die Anwendung des Satzes von Fubini liefert\vspace{-0.15cm}
$$0=\hspace{-0.16cm}\int\limits_{X\times X}\hspace{-0.13cm} \Big|\hspace{-0.099cm}\lim\limits_{N\to\infty} \frac{1}{N}\hspace{-0.099cm}\sum_{n=1}^N\hspace{-0.099cm} f(T^nx)\overline{f(T^n\xi)}\Big|d\big(\mu\otimes \mu\big)(x,\xi)=\hspace{-0.099cm}\int\limits_X\hspace{-0.099cm}\int\limits_X\hspace{-0.099cm} \Big|\hspace{-0.099cm}\lim\limits_{N\to\infty}\frac{1}{N}\hspace{-0.099cm}\sum_{n=1}^N\hspace{-0.099cm}f(T^nx)\overline{f(T^n\xi)}\Big|d\mu(\xi)d\mu(x).$$
Deshalb gilt für $\mu$"=fast alle $x\in X$:\vspace{-0.15cm}
$$\int\limits_X \Big|\lim\limits_{N\to\infty}\frac{1}{N}\sum_{n=1}^Nf(T^nx)\overline{f(T^n\xi)}\Big|d\mu(\xi)=0$$ und folglich\vspace{-0.15cm}
$$\lim\limits_{N\to\infty}\frac{1}{N}\sum_{n=1}^Nf(T^nx)\overline{f(T^n\xi)}=0 \text{ für }\mu"=\text{fast alle }\xi\in X.$$ \noindent Aufgrund der Vollständigkeit von $(X,\mathfrak{A},\mu)$ ist deshalb $X_1\in \mathfrak{A}$ erfüllt.
Das Lemma ist damit bewiesen.	
\end{proof}

\begin{Lemma}
	\label{X2}
	Seien $(X,\mathfrak{A},\mu,T)$ ein ergodisches maßtheoretisches dynamisches System und $f\in L^\infty(X,\mu)$ eine einfache Funktion  mit dem Bild $f(X)$. Für $n\in \mathbb{N}$ sei die Abbildung $\Phi_n: X\to f(X)^n$ definiert durch \vspace{-0.15cm}$$\Phi_n: x\mapsto \big(f(x), f(Tx), \dots, f(T^{n-1}x)\big).$$ Weiterhin sei für $n\in \mathbb{N}$ und $A\subseteq f(X)^n$ die Menge $X_A^n\in \mathfrak{A}$ gegeben durch \vspace{-0.15cm}$$X_A^n:=\bigg\{x\in X\Big|\lim\limits_{N\to\infty}\frac{1}{N}\sum_{k=1}^N\big(\mathbbm{1}_A \circ\Phi_n\big)(T^kx)=\mu\big(\big\{\xi\in X|\ \big(f(T\xi), \dots, f(T^{n}\xi)\big)\in A \big\}\big)\bigg\}$$ sowie die Menge $X_2\in \mathfrak{A}$ definiert als \vspace{-0.3cm}
	\begin{equation}
	\label{MengeX2}
	X_2:=\bigcap\limits_{n\in \mathbb{N}}\bigcap\limits_{A\subseteq f(X)^n} X_A^n.
	\end{equation}
	Dann gilt $$\mu(X_2)=1.$$	
\end{Lemma}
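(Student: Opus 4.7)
Der Beweisplan besteht darin, für jedes feste $n\in\mathbb{N}$ und jede Teilmenge $A\subseteq f(X)^n$ den punktweisen Ergodensatz auf die Funktion $\mathbbm{1}_A\circ\Phi_n$ anzuwenden und anschließend auszunutzen, dass die Menge $X_2$ nur von abzählbar vielen Bedingungen definiert wird.

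Zunächst würde ich bemerken, dass die Abbildung $\Phi_n: X\to f(X)^n$ messbar ist, da $f$ messbar ist und $T$ eine maßerhaltende Abbildung. Folglich ist auch $\mathbbm{1}_A\circ\Phi_n$ messbar und wegen $0\leq \mathbbm{1}_A\circ\Phi_n\leq 1$ ein Element von $L^1(X,\mu)$. Da $(X,\mathfrak{A},\mu,T)$ nach Voraussetzung ergodisch ist, liefert der punktweise Ergodensatz (Satz \ref{PunktweiserErgodensatz}) zusammen mit der dazugehörigen Bemerkung, dass für $\mu$"=fast alle $x\in X$
\begin{equation*}
\lim_{N\to\infty}\frac{1}{N}\sum_{k=1}^N \big(\mathbbm{1}_A\circ\Phi_n\big)(T^kx)=\int\limits_X \mathbbm{1}_A\circ\Phi_n\, d\mu
\end{equation*}
gilt. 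Der Wert des Integrals lässt sich wegen der $T$"=Invarianz von $\mu$ umformen zu
\begin{equation*}
\int\limits_X \mathbbm{1}_A\circ\Phi_n\, d\mu=\mu\big(\Phi_n^{-1}(A)\big)=\mu\big(\big\{\xi\in X|\ \big(f(\xi),\dots,f(T^{n-1}\xi)\big)\in A\big\}\big)=\mu\big(\big\{\xi\in X|\ \big(f(T\xi),\dots,f(T^{n}\xi)\big)\in A\big\}\big),
\end{equation*}
wobei die letzte Gleichheit aus $\mu=T_*\mu$ folgt, indem man die Zuordnung $\xi\mapsto T\xi$ verwendet. Damit ist $\mu(X_A^n)=1$ für jedes $n\in\mathbb{N}$ und jede Teilmenge $A\subseteq f(X)^n$ gezeigt.

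Nun würde ich die entscheidende Beobachtung ausnutzen, dass $f$ als einfache Funktion nur endlich viele Werte annimmt. Dadurch ist $f(X)$ eine endliche Menge, also auch $f(X)^n$ für jedes $n\in\mathbb{N}$, und die Potenzmenge von $f(X)^n$ ist ebenfalls endlich. Für festes $n$ ist der Schnitt $\bigcap_{A\subseteq f(X)^n} X_A^n$ folglich ein endlicher Schnitt von Mengen mit Maß $1$ und hat somit ebenfalls Maß $1$. Der verbleibende äußere Schnitt über $n\in\mathbb{N}$ ist abzählbar, sodass
\begin{equation*}
\mu(X_2)=\mu\bigg(\bigcap_{n\in\mathbb{N}}\bigcap_{A\subseteq f(X)^n}X_A^n\bigg)=1
\end{equation*}
unmittelbar aus der $\sigma$"=Subadditivität von $\mu$ angewendet auf die Komplemente folgt. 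Eine echte technische Hürde sehe ich nicht: Der einzige zu beachtende Punkt ist die endliche Kardinalität der Potenzmenge von $f(X)^n$, welche die Überabzählbarkeit des Schnittes über alle messbaren Teilmengen, die hier problematisch wäre, vermeidet. Dass $f$ einfach ist, ist also wesentlich für die Gültigkeit des Arguments.
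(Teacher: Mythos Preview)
Your proof is correct and follows essentially the same approach as the paper: apply the pointwise ergodic theorem to the bounded function $\mathbbm{1}_A\circ\Phi_n$, identify the limit with the desired measure via $T$-invariance of $\mu$, and conclude by exploiting that $f$ is simple so that the double intersection is countable. The only cosmetic difference is that the paper first shifts the ergodic average by one index and applies the ergodic theorem to $\mathbbm{1}_A\circ\Phi_n\circ T$, whereas you apply it directly and shift the integral afterwards; both are equivalent.
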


\begin{proof}[\textbf{Beweis:}]
Die Abbildung $\lim_{N\to\infty} \frac{1}{N}\sum_{k=1}^N \big(\mathbbm{1}_A \circ\Phi_n\big)(T^kx)$ ist als punktweiser Grenzwert von Kompositionen messbarer Abbildungen messbar, deshalb gilt $X_A^n\in \mathfrak{A}$.   
Da $(X, \mathfrak{A}, \mu, T)$ ein ergodisches System sowie $f$ ein Element aus $L^\infty(X,\mu)$ ist, gilt nach dem punktweisen Ergodensatz für jedes $n\in \mathbb{N}$ und $A \subseteq f(X)^n$ sowie für $\mu$- fast alle $x\in X$:
\begin{equation*}
\begin{split}
\lim_{N\to\infty}\frac{1}{N}\sum_{k=1}^N \big(\mathbbm{1}_A \circ \Phi_n\big)(T^kx)
&=\lim_{N\to\infty}\frac{1}{N}\sum_{k=2}^{N+1} \big(\mathbbm{1}_A \circ \Phi_n\big)(T^kx)\\
&=\lim_{N\to\infty}\frac{1}{N}\sum_{k=1}^N \mathbbm{1}_A \circ \Phi_n\circ T(T^kx)\allowdisplaybreaks[1]\\
&= \int\limits_X \mathbbm{1}_A\circ \Phi_n\circ T(x)d\mu(x)\\
&=\mu\left(\left\{\xi\in X|\ \big(f(T\xi), f(T^2\xi), \dots f(T^{n}\xi)\big)\in A \right\}\right).
\end{split}\end{equation*}

\noindent Folglich ist 
\begin{equation}
\label{Gleichung2.16}
\mu(X_A^n)=1
\end{equation}
\noindent erfüllt. Definiere nun für $n\in \mathbb{N}$ die Menge $X_2^n\in \mathfrak{A}$ durch
$$X_2^n:=\bigcap\limits_{A\subseteq f(X)^n} X_A^n.$$
Weil $f$ nach Voraussetzung einfach ist, hat die Menge $X_2^n$ als endlicher Schnitt von Mengen mit Maß 1 selbst volles Maß und wir erhalten $$\mu(X_2)=1.\vspace{-0.99cm}$$
\end{proof}

\begin{Bemerkung}
	Wir haben die Mengen $X_1$ und $X_2$ im Hinblick auf Schritt II (Abschnitt \ref{SchrittIV}) konstruiert. 
\end{Bemerkung}

\noindent Im Folgenden beweisen wir in drei Schritten Satz \ref{Satz2.5} und zeigen, dass die Menge $X'$ bis auf eine Nullmenge als $X_1\cap X_2$ gewählt werden kann.

\begin{proof}[\textbf{Beweis von Satz \ref{Satz2.5}:}]
\noindent Seien $X_1, X_2\in \mathfrak{A}$ die Mengen, welche wir durch Anwendung der Lemmata \ref{Behauptung3.1} und \ref{X2} auf das System $(X,\mathfrak{A},\mu,T)$ und die Funktion $f\in \mathcal{K}^\bot \cap L^\infty(X,\mu)$ erhalten. Definiere
\begin{equation*}
\label{Gleichung3.13}
\mathfrak{A}\ni X':=X_1\cap X_2.
\end{equation*}
Nach Definition der Mengen $X_1$ und $X_2$ sind diese unabhängig vom System $(Y,\mathfrak{B},\nu,S)$ und der Funktion $g\in L^\infty(Y,\nu)$ und es gilt $\mu(X')=1.$ Da $T$ maßerhaltend ist und der abzählbare Schnitt von Mengen mit Maß $1$ wieder volles Maß hat, können wir zudem ohne Beschränkung der Allgemeinheit annehmen, dass 
\begin{equation}
\label{Gleichung1000}
|f(T^nx)| \leq ||f||_{L^\infty(X,\mu)} \text{ für alle }x\in X' \text{ und  }n\in \mathbb{N}
\end{equation} gilt. Mithilfe eines indirekten Beweises zeigen wir nun, dass für alle $x\in X'$ 

$$\lim_{N\to\infty}\frac{1}{N}\sum_{n=1}^N f(T^nx)g(S^ny)= 0 \text{ für }\nu-\text{fast alle } y\in Y$$ 
erfüllt ist.\\
\noindent  Sei also angenommen, dass dies nicht der Fall ist. 
Dann existiert ein $x\in X'$, sodass für die Menge 
\begin{equation}
\label{Gleichung3.14}
\mathfrak{B}\ni B^*:= \Big\{y\in Y\Big|\limsup\limits_{N\to\infty}\Big|\frac{1}{N}\sum_{n=1}^N f(T^nx)g(S^ny) \Big|>0 \Big.\Big\}
\end{equation}
gilt:
$$\nu(B^*)>0.$$
Wir fixieren für den gesamten Beweis von Satz \ref{Satz2.5} dieses $x\in X'$.

\subsection{Schritt I: Konstruktion von "`schlechten"' Intervallen}
\noindent   Es bezeichne Re: $\mathbb{C}\to \mathbb{R}$ diejenige Funktion, welche eine komplexe Zahl auf ihren Realteil abbildet und  Im: $\mathbb{C}\to \mathbb{R}$ diejenige Funktion, welche eine komplexe Zahl auf ihren Imaginärteil abbildet. Wir zeigen nun die folgende Behauptung:
\begin{Behauptung}
\label{Behauptung2.6} 
Es existieren Konstanten $0<a,c\in \mathbb{R}$, sodass für ein beliebiges $J\in \mathbb{N}$ sowie beliebige $\mathfrak{k}_1, \mathfrak{k}_2, \dots, \mathfrak{k}_J\in \mathbb{N}$ eine Menge $B\in \mathfrak{B}$ mit $B\subseteq B^*$ und $\nu(B)>c$, eine messbare Abbildung $\mathcal{RI}\in\{ \operatorname{Re}, \operatorname{Im}, -\operatorname{Re}, -\operatorname{Im}\}$  und eine Folge von Intervallen $\big((L_j, M_j)\big)_{j=1}^J$ aus $\mathbb{N}$ mit $L_j>\mathfrak{k}_j,\ j=1, \dots, J$ existieren, sodass gilt: Für jedes $y\in B$ und $j\in \{1, \dots, J\}$ existiert ein $n_j=n_j(y)\in (L_j, M_j)$ derart, dass
\begin{equation}
\label{Gleichung2.18}
\mathcal{RI}\Big( \sum_{n=1}^{n_j} f(T^nx)g(S^ny)\Big) >an_j
\end{equation} erfüllt ist. Dabei kann  für $j\in \{1, \dots, J-1\}$ das Intervall $(L_j, M_j)$ unabhängig von den Zahlen $\mathfrak{k}_{j+1}, \dots, \mathfrak{k}_J$ konstruiert werden.
\end{Behauptung}

\begin{proof}[\textbf{Beweis:}]

\noindent Für $N\in \mathbb{N}$ sei $F_N:Y\to \mathbb{C}$ definiert als die messbare Funktion $$F_N(y) := \frac{1}{N}\sum_{n=1}^N f(T^nx)g(S^ny).$$ 

\noindent Da für $B^*\in \mathfrak{B}$ aus $(\ref{Gleichung3.14})$\vspace{-0.1cm}
$$\nu(B^*)>0$$
erfüllt ist und sich $F_N$ darstellen lässt als $F_N=\operatorname{Re}(F_N)+i\cdot \operatorname{Im}(F_N)$, können wir eine Abbildung $\mathcal{RI}\in \{\operatorname{Re}, \operatorname{Im}, -\operatorname{Re}, -\operatorname{Im}\}$ wählen, sodass gilt:
 \begin{equation}
\label{Gleichung2.20}
\nu\Big(\big\{y\in Y\bigg| \limsup\limits_{N\to\infty}\mathcal{RI}\big(F_N(y)\big)>0\big\}\Big)>0.
\end{equation}
Da weiterhin\vspace{-0.1cm}
\begin{equation*}
\Big\{y\in Y\big|\limsup\limits_{N\to\infty}\mathcal{RI}\big(F_N(y)\big) >0\Big\}=\bigcup_{k\in \mathbb{N}}\Big\{y\in Y\big|\limsup\limits_{N\to\infty}\mathcal{RI}\big(F_N(y)\big)>\frac{1}{k}\Big\}
\end{equation*}
erfüllt ist, existiert ein $k_0\in \mathbb{N}$ mit 
\begin{equation} 
\label{Gleichung3.17}
\nu\Big(\Big\{\underbrace{y\in Y\big|\limsup\limits_{N\to\infty}\mathcal{RI}\big(F_N(y)\big)>\frac{1}{k_0} }_{=:B'\in \mathfrak{B}}\Big\}\Big)>0.
\end{equation}
\noindent Definiere\vspace{-0.1cm} \begin{equation}
\label{Gleichung2.22b}a:=\frac{1}{k_0}.
\end{equation}
\noindent Seien nun $J\in \mathbb{N}$ sowie $\mathfrak{k}_1, \mathfrak{k}_2, \dots, \mathfrak{k}_J\in \mathbb{N}$ beliebig gewählt. Im Folgenden konstruieren wir die in der Behauptung \ref{Behauptung2.6} gesuchte Menge $B$. Dazu führen wir eine Folge von Mengen $B_1, \dots, B_J \in \mathfrak{B}$ mit $B_J\subseteq B_{J-1}\subseteq \dots \subseteq B_2\subseteq B_1\subseteq B'\subseteq B^*$ ein, für welche gilt: Es existiert eine Folge von Intervallen $\big((L_j, M_j)\big)_{j=1}^J$ aus $\mathbb{N}$ derart, dass für $j\in \{1, \dots, J\}$ und alle $y\in B_j$ \vspace{-0.1cm}
$$\sup_{L_j<N<M_j} \mathcal{RI}\big(F_N(y)\big)>a$$ sowie $L_j>\mathfrak{k}_j$ erfüllt ist. Die gesuchte Menge $B$ ist dann gerade durch $B_J$ gegeben. Im Folgenden definieren wir die Mengen $B_1,\dots, B_J$:\\
\noindent Da für jedes Element $y$ aus der in \eqref{Gleichung3.17} definierten Menge $B'$ und jedes $L\in \mathbb{N}$ nach Definition des Limes superior\vspace{-0.1cm}
$$\sup\limits_{N>L}\mathcal{RI}\big(F_N(y)\big)\geq \limsup\limits_{N\to\infty}\mathcal{RI}\big(F_N(y)\big)>a$$ erfüllt ist, gilt für beliebiges $L\in \mathbb{N}$ die folgende Identität:\vspace{-0.1cm}
\begin{equation}
\label{Gleichung2.22}
B'=\Big\{y\in B'\Big| \sup\limits_{N>L}\mathcal{RI}\big(F_N(y)\big)>a\Big\}=\bigcup_{M>L}\Big\{\underbrace{y\in B'\Big| \sup\limits_{L<N<M}\mathcal{RI}\big(F_N(y)\big)>a}_{=:B_M \in \mathfrak{B}}\Big\}.
\end{equation}
Da alle  $B_M$ messbare Mengen sind sowie $B_M\subseteq B_{N}$ für $N\geq M$ erfüllt ist, gilt \vspace{-0.15cm}
\begin{equation}
\label{Gleichung2.23}
\lim_{M\to\infty}\nu(B_M)=\nu(B')>0.
\end{equation}
Wähle $\mathbb{N}\ni L_1>\mathfrak{k}_1$ beliebig. Wegen $(\ref{Gleichung2.23})$ existiert ein $\mathbb{N}\ni M_1 > L_1$, sodass für die Menge $\mathfrak{B}\ni B_{M_1}\subseteq B'$  \vspace{-0.05cm}$$\nu(B_{M_1})>\left(1-\frac{1}{2J}\right) \nu(B')$$ erfüllt ist. 
Definiere \vspace{-0.05cm}$$B_1:=B_{M_1}$$ sowie $$R_1:=(L_1, M_1).$$
\noindent Die weiteren Intervalle definieren wir induktiv. 
Seien für $k\in \mathbb{N}$ mit $2\leq k\leq J$ bereits Intervalle $(L_j, M_j), \ j=1, \dots, k-1$ mit $L_j>\mathfrak{k}_j$ sowie  Mengen $B_{1}\supseteq \dots \supseteq B_{k-1} 
$ mit der Eigenschaft gefunden, dass für alle $y\in B_{k-1}$ und alle $j\in \{1, \dots, k-1\}$ gilt: Es existiert ein $n_j=n_j(y)\in (L_j, M_j)$ derart, dass $\mathcal{RI}\big(\sum_{n=1}^{n_j}f(T^nx)g(S^ny)\big)>a{n_j}$ und $\nu(B_{k-1})>\big(1-\frac{k-1}{2J}\big)\nu(B')$ erfüllt sind.\\
\noindent Wähle ein beliebiges $\mathbb{N}\ni L_{k}>\max\{\mathfrak{k}_k, M_{k-1}\}$. Analog zu $(\ref{Gleichung2.22})$ gilt\vspace{-0.05cm}
$$B_{k-1}=\bigcup_{M>L_k}\bigg\{\underbrace{y\in B_{k-1}\bigg| \sup\limits_{L_K<N<M}\mathcal{RI}\big(F_N(y)\big)>a}_{=:B_{M}^{(k-1)}\in \mathfrak{B}}\bigg\}.$$
\noindent Folglich existieren ein $M_k \in \mathbb{N}$, sodass für die Menge $\mathfrak{B}\ni B_{M_k}^{(k-1)}\subseteq B_{{k-1}}$\vspace{-0.05cm}
$$\nu\big(B_{M_k}^{({k-1})}\big)>\nu(B_{{k-1}})-\frac{1}{2J}\nu(B')>\left(1-\frac{k}{2J}\right)\nu(B')$$ erfüllt ist.
Definiere \vspace{-0.05cm}$$B_k:=B_{M_k}^{(k-1)}$$ sowie \vspace{-0.05cm}$$R_k:=(L_k, M_k).$$
Die Definition von $R_k$ ist insbesondere unabhängig von den Zahlen $\mathfrak{k}_{k+1}, \dots, \mathfrak{k}_J$.\\
\noindent Seien nun die Intervalle $(L_j, M_j)\ \ j=1, \dots, J$ sowie die Mengen $B_1, \dots, B_J \in \mathfrak{B}$ mit $B_J\subseteq B_{J-1}\subseteq \dots \subseteq B_2\subseteq B_1\subseteq B'\subseteq B^*$  gefunden. Definiere\vspace{-0.05cm} 
$$\mathfrak{B}\ni B:=B_J.$$ Dann gilt\vspace{-0.05cm} $$\nu(B)>\left(1-\frac{J}{2J}\right)\nu(B')=\frac{1}{2}\nu(B')>0.$$ Definiere \vspace{-0.05cm}$$c:=\frac{1}{2}\nu(B')>0.$$ Weiterhin ist nach Konstruktion für alle $j\in \{1, \dots, J\}$ und alle $y\in B$ $$\sup\limits_{N\in (L_j ,M_j)}\mathcal{RI}\big(F_N(y)\big)>a$$  erfüllt. Also existiert für jedes $j\in \{1, \dots, J\} $ und jedes $y\in B$ ein $n_j=n_j(y)\in (L_j, M_j)$ mit 
$$\mathcal{RI}\Big(\sum_{n=1}^{n_j}f(T^nx)g(S^ny)\Big)>n_j a. $$
Damit ist die Behauptung bewiesen. 
\end{proof}
\begin{Bemerkung}
\label{Bemerkung4.1}\begin{itemize}
	\item[] 
	\item[(1)] Die Intervalle $(L_j, M_j), \ j=1, \dots, J$ sind gerade so gewählt, dass sich die Folge $\big(\frac{1}{n_j}\sum_{n=1}^{n_j} f(T^nx)g(S^ny)\big)_{j=1}^J$ in Bezug auf das Rückkehrzeitentheorem "`schlecht"' verhält und sich für beliebig großes $J$ gerade nicht $0$ annähert. 
 \item[(2)] Wir benötigen die Aussage, dass das Intervall $(L_j, M_j)$ für $j\in \{1, \dots, J-1\}$ unabhängig von den Zahlen $\mathfrak{k}_{j+1}, \dots, \mathfrak{k}_J$ konstruiert werden kann, um die Voraussetzungen in Schritt III zu erfüllen. Aus diesem Grund haben wir auch die Konstante $c>0$ als von $J$ und $\mathfrak{k}_1, \dots, \mathfrak{k}_J$ unabhängige untere Schranke für das Maß von $B$ eingeführt. \item[(3)]  Nach Wahl der Intervalle $\big((L_j, M_j)\big)_{j=1}^J$ ist insbesondere $L_1<M_1<L_2<M_2<\dots < L_J<M_J$ erfüllt. Auf diese Ungleichung werden wir in späteren Beweisen wiederholt Bezug nehmen.
\item[(4)]Die Annahme in Satz \ref{Satz2.5}, $f\in \mathcal{K}^\bot\cap L^\infty(X,\mu)$ sei eine einfache Funktion, ist zum Beweis dieser Behauptung nicht erforderlich.\end{itemize}
	\end{Bemerkung}

\subsection{Schritt II: "`Gute"' Intervalle}
\label{SchrittIV}
Das Ziel dieses Abschnittes ist, die  nachfolgend formulierte Behauptung \ref{Behauptung3.21} zu beweisen. Es sei nochmals darauf hingewiesen, dass wir zu Beginn des Beweises ein $x\in X'=X_1\cap X_2$ fixiert haben, sodass wir nachfolgend die Definitionen der Mengen $X_1$ und $X_2$ im Abschnitt \ref{Vorbetrachtungen2} in Verbindung mit den Lemmata \ref{Behauptung3.1} und \ref{X2} nutzen können.\\
\noindent Aufgrund der Wahl $x\in X_1$ gilt
\begin{equation}
\label{Gleichung2.28}
\lim\limits_{N\to\infty}\frac{1}{N}\sum_{n=1}^N f(T^nx)\overline{f(T^n\xi)}=0 \text{ für }\mu\text{"=fast alle }\xi \in X.
\end{equation}
Für beliebiges $\delta >0$ und $\delta''>0$ existiert nach Satz \ref{Jegorow} eine Menge 
$C \in \mathfrak{A}$ mit $\mu(C)>1-\delta''$, sodass  der Grenzwert in $(\ref{Gleichung2.28})$ auf $C$ gleichmäßig in $\xi$ ist. Folglich existiert ein $N_{\delta} \in \mathbb{N}$ derart, dass für alle $N>N_\delta$ und alle $\xi\in C$ 
\begin{equation}
\label{Gleichung2.29b}
\left|\frac{1}{N}\sum_{n=1}^N f(T^nx)\overline{f(T^n\xi)}\right|<\delta
\end{equation} 
erfüllt ist. 
\noindent Für $n\in \mathbb{N}$ bezeichne $B_n^*\subseteq f(X)^n$ die Menge
\begin{equation} 
	\label{Gleichung3.22}
B_n^*:=\left\{\big(f(T\xi), f(T^2\xi), \dots, f(T^{n}\xi)\big)\left| \xi \in C\right.\big)\right\}.
\end{equation}
Mithilfe dieser Bezeichnungen, welche wir auch in Schritt III nutzen werden, können wir die folgende Behauptung formulieren:
\begin{Behauptung}
\label{Behauptung3.21}
Sei $(L,M)\subset \mathbb{N}$ ein nichtleeres Intervall. Dann existiert eine Zahl $N(M)\in \mathbb{N}$, sodass für alle $N>N(M)$ 
$$\frac{1}{N}\cdot \# \left\{k\in \{1, \dots, N\}\Big| \ \big(f(T^kx),\dots, f(T^{k+n-1}x)\big)\in B_n^*\text{ für alle }n\in (L,M)\right\}>1-\delta''$$ erfüllt ist.
\end{Behauptung}
\begin{Bemerkung}
Die Intervalle $[k, k+n-1]$, für welche $\big(f(T^kx),\dots, f(T^{k+n-1}x)\big)\in B_n^*\text{ für alle }n\in (L,M)$ gilt, verhalten sich "`gut"'  in Bezug auf den Grenzwert in \eqref{Gleichung2.28}. 
\end{Bemerkung}
\begin{proof}[\textbf{Beweis:}]
Weil $x$ ein Element der Menge $X'=X_1\cap X_2$ ist, gilt nach Definition der Menge $X_2$ in Lemma \ref{X2} für  $n\in \mathbb{N}$ und $B_n^*\subseteq f(X)^n$:
\begin{equation}
\label{Gleichung3.21}
\begin{split} 
\lim\limits_{N\to\infty}\frac{1}{N}\sum_{k=1}^N\big(\mathbbm{1}_{B_n^*} \circ\Phi_n\big)(T^kx)
&=\lim\limits_{N\to\infty}\frac{1}{N}\sum_{k=1}^N\mathbbm{1}_{B_n^*} \big(f(T^kx), f(T^{k+1}x), \dots, f(T^{k+n-1}x)\big)\\
&\hspace{-0.1cm}\overset{\text{Def.}}{\underset{X_2}{=}}\mu\big(\underbrace{\big\{\xi\in X\big|\ \big(f(T\xi), f(T^2\xi), \dots, f(T^{n}\xi)\big)\in B_n^{*} \big\}}_{\supseteq C\text{ nach Definition von }B_n^* \text{ in } \eqref{Gleichung3.22}}\big)\\
&\geq \mu(C) > 1-\delta''.
\end{split}
\end{equation}

\noindent Deshalb gilt
\begin{equation*}
\begin{split}
&\hspace{0.45cm}\lim\limits_{N\to\infty}\frac{1}{N}\cdot\#\left\{k\in \{1, \dots, N\}\big| \ \big(f(T^kx), f(T^{k+1}x), \dots, f(T^{k+n-1}x)\big)\in B_n^*\right\}\\
&=\lim\limits_{N\to\infty}\frac{1}{N}\cdot\#\left\{k\in \{1, \dots, N\}\big| \Phi_n(T^kx)\in B_n^*\right\}\\
&=\lim\limits_{N\to\infty}\frac{1}{N}\sum_{k=1}^N \big(\mathbbm{1}_{B_n^*}\circ \Phi_n\big)(T^kx)\\
&\hspace{-0.2cm} \overset{\eqref{Gleichung3.21}}{>}1-\delta''.
\end{split}
\end{equation*}
Folglich existiert  ein $N(n)\in \mathbb{N}$, sodass für alle $N>N(n)$
\begin{equation} 
\label{Gleichung3.25}
\frac{1}{N}\cdot\#\left\{k\in \{1, \dots, N\}\big| \ \big(f(T^kx), f(T^{k+1}x), \dots, f(T^{k+n-1}x)\big)\in B_n^*\right\}>1-\delta''
\end{equation}
 erfüllt ist. Insbesondere existiert im Fall $n=M$ also ein $N(M)$, sodass für alle $N>N(M)$ 
 \begin{equation} 
 \label{Gleichung3.24}
 \frac{1}{N}\cdot \# \left\{k\in \{1, \dots, N\}\big| \ \big(f(T^kx), f(T^{k+1}x), \dots, f(T^{k+M-1}x)\big)\in B_M^*\right\}>1-\delta''
 \end{equation}erfüllt ist.\\ \noindent Weiterhin gilt nach Definition der Menge $B_n^*$:\\ \noindent Falls die Folge $\big(f(T^kx), f(T^{k+1}x), \dots, f(T^{k+n-1}x)\big)$ ein Element von $B_n^*$ ist, dann ist für $m\leq n$ die Folge $\big(f(T^kx), f(T^{k+1}x), \dots, f(T^{k+m-1}x)\big)$ ein Element von $B_m^*$. Deshalb ist für alle $N\in \mathbb{N}$
\begin{align}
\label{Gleichung2.31}
\nonumber
&\quad \hspace{0.05cm} \left\{k\in \{1, \dots, N\}\big| \ \big(f(T^kx), f(T^{k+1}x), \dots, f(T^{k+n-1}x)\big)\in B_n^*\right\}\\
\nonumber
&\subseteq \left\{k\in \{1, \dots, N\}\big| \ \big(f(T^kx), f(T^{k+1}x), \dots, f(T^{k+m-1}x)\big)\in B_m^*\right\} 
\end{align}
erfüllt.
\noindent Folglich gilt für alle  $N>N(M)$ \vspace{-0.15cm}
\begin{align}
\nonumber
&\quad \ \frac{1}{N}\cdot \# \left\{k\in \{1, \dots, N\}\big| \ \big(f(T^kx), \dots, f(T^{k+n-1}x)\big)\in B_n^* \text{ für alle }n\in (L,M)\right\}\\
\nonumber
&\geq \frac{1}{N}\cdot \# \left\{k\in \{1, \dots, N\}\big| \ \big(f(T^kx), f(T^{k+1}x), \dots, f(T^{k+M-1}x)\big)\in B_M^*\right\}\\
\nonumber
&\hspace{-0.2cm}\overset{\eqref{Gleichung3.24}}{>}1-\delta''. 
\end{align}\vspace{-1.5cm}\\
\end{proof}
\subsection{Schritt III: Der Widerspruch}
\label{SchrittV}
In diesem Abschnitt werden wir aufbauend auf den Behauptungen \ref{Behauptung2.8}, \ref{Behauptung2.6} und \ref{Behauptung3.21}  die Behauptung \ref{Behauptung43} beweisen und zu einem Widerspruch führen. Dazu verwenden wir die in den Behauptungen \ref{Behauptung2.8}, \ref{Behauptung2.6} und \ref{Behauptung3.21} bereits genutzte Notation. Bevor wir die Behauptung \ref{Behauptung43} formulierten können, müssen wir zunächst einige Konstanten passend wählen:\\ 
Wir wählen für $a$ und $c$ aus Behauptung \ref{Behauptung2.6} ein beliebiges $J\in \mathbb{N}$ mit \vspace{-0.15cm}
\begin{equation}
\label{Gleichung2.34d}
J >\frac{4||f||^2_{L^\infty(X,\mu)}||g||^2_{L^\infty(Y, \nu)}}{a^2}
\end{equation}
sowie ein beliebiges\vspace{-0.15cm}
\begin{equation} \label{Gleichung2.31b}0<\delta<\min(c,a).  \end{equation}
Wir wählen nun in Behauptung \ref{Behauptung2.8} $\delta'\in (0,1)$ so, dass \vspace{-0.15cm}
\begin{equation}
\label{Gleichung2.34c}
\frac{J(J+1)}{2}\delta'<\frac{\delta}{a}
\end{equation}
gilt. Wir erhalten aus Behauptung \ref{Behauptung2.8} $K,M_0\in \mathbb{N}$ und wählen in Schritt II 
\begin{equation}
\label{Gleichung2.32b}
\delta''<\frac{\delta'}{2(K+1)}.
\end{equation}
Damit erhalten wir in Schritt II ein $N_\delta\in \mathbb{N}$ (siehe \eqref{Gleichung2.29b}) und wählen $L_1$ in Schritt I so, dass 
\begin{equation}
\label{Gleichung2.32}
\frac{K+1}{L_1}<\frac{\delta'}{4} \text{ und } L_1>M_0
\end{equation}

\noindent sowie
\begin{equation}
\label{Gleichung2.33}
L_1>N_\delta
\end{equation}

\noindent erfüllt sind. Weiterhin wählen wir die Folge von Intervallen $\big( (L_j, M_j)\big)_{j=1}^J$ aus Schritt I sowie die Folge $\big(N(M_j)\big)_{j=1}^J$ aus Schritt II so, dass für alle $j\in \{1, \dots, J-1\}$ \begin{equation}
\label{Gleichung2.34b}
\frac{M_j}{L_{j+1}}<\delta'', \ L_{j+1}>N(M_j), \ \frac{M_j}{N(M_{j+1})}<\delta' \text{ und }N(M_{j+1})>{N(M_j)}
\end{equation} erfüllt sind (siehe nachfolgende Bemerkung \ref{Bemerkung42}). Weiterhin wählen wir ein $N\in \mathbb{N}$ mit 
\begin{equation}
\label{Gleichung2.39}
N> N(M_J) \text{ sowie } \frac{M_J}{N}<\frac{\delta'}{4}.
\end{equation}

\begin{Bemerkung}
	\label{Bemerkung42}
\begin{itemize}\item[]
	\item[(1)] Um die Bedingungen \eqref{Gleichung2.32} und \eqref{Gleichung2.33} zu erfüllen, wählen wir zunächst $\mathfrak{k}_1\in \mathbb{N}$ so, dass die entsprechenden Ungleichungen mit $\mathfrak{k}_1$ anstelle von $L_1$ Gültigkeit besitzen. Nach Behauptung \ref{Behauptung2.6} können wir das Intervall $(L_1,M_1)$ so wählen, dass $L_1>\mathfrak{k}_1$ erfüllt ist. $N(M_1)$ erhalten wir aus Schritt II.\\
	\noindent Nun wählen wir $\mathfrak{k}_2\in \mathbb{N}$ so, dass die Bedingungen in \eqref{Gleichung2.34b} mit $\mathfrak{k}_2$ anstelle von $L_2$ erfüllt sind. Nach Behauptung \ref{Behauptung2.6} ist die Konstruktion des Intervalls $(L_1, M_1)$ unabhängig von $\mathfrak{k}_2$ (sowie allen übrigen $\mathfrak{k}_j$, $j=3, \dots, J$). Wir können nun das Intervall $(L_2,M_2)$ so wählen, dass $L_2>\mathfrak{k}_2$ gilt. Nun wählen wir $N(M_2)$ in Behauptung \ref{Behauptung3.21} so, dass $\frac{M_1}{N(M_2)}<\delta'$ und $N(M_2)>N(M_1)$ gilt.\\
	Analog dazu können wir  für $j>2$ induktiv die Konstanten $\mathfrak{k}_j, L_j, M_j$ sowie $N(M_j)$ so wählen, dass die Bedingungen in \eqref{Gleichung2.34b} erfüllt sind.
	\item[(2)] Für $G$ gilt nach Behauptung \ref{Behauptung2.8} $\nu(G)>1-\delta$ und nach Behauptung \ref{Behauptung2.6} sowie nach Wahl von $\delta$ in \eqref{Gleichung2.31b} $\nu(B)>c>\delta$. Deshalb ist $\nu(B\cap G)>0$ erfüllt. Da $g$ nach Voraussetzung in Satz \ref{Satz2.5} ein Element aus $L^\infty(Y,\nu)$ ist, gilt $|g(y)|\leq ||g||_{L^\infty(Y,\nu)}$ für $\nu$"=fast alle $y\in Y$. Weil die Abbildung $S$ weiterhin maßerhaltend ist und der Schnitt abzählbar vieler Mengen mit Maß 1 wieder volles Maß hat, existiert deshalb ein $y\in B\cap G$ mit $|g(S^ny)|\leq ||g||_{L^\infty(Y,\nu)},\ n=1, \dots, N$.
\end{itemize}
\end{Bemerkung}

 \noindent In den nächsten Abschnitten werden wir Folgen $\big(c_n^j\big)_{n=1}^N$, $j=1, \dots, J$ induktiv definieren. Dazu gehen wir in umgekehrter Reihenfolge in $j$ vor, das heißt wir beginnen mit $j=J$. Für gegebenes $j$ konstruieren wir zunächst sogenannte Basisintervalle $(l_1^j, m_1^j], (l_2^j, m_2^j], \dots, (l_{\kappa_j}^j, m_{\kappa_j}^j]$. Die Folge $\big(c_n^j\big)_{n=1}^N$ definieren wir anschließend in Abhängigkeit von diesen Basisintervallen. Im nächsten Induktionsschritt definieren wir die Basisintervalle für ${j-1}$ innerhalb der gerade definierten Basisintervalle $(l_1^j, m_1^j],\dots, (l_{\kappa_j}^j, m_{\kappa_j}^j]$ und in Abhängigkeit von der Folge $\big(c_n^j\big)_{n=1}^N$. Aufgrund dieser schrittweisen Konstruktion der Basisintervalle für $j=J, J-1, J-2, \dots, 1$, werden wir von nun an von Basisintervallen der $j$'ten Schicht sprechen. Abbildung \ref{Abbildung} stellt die Konstruktion dieser Basisintervalle beispielhaft dar.\\ 
 \begin{figure}
 	\includegraphics[trim=0.5cm 0.4cm 0.3cm 1cm, clip=true, width=1.0\textwidth]{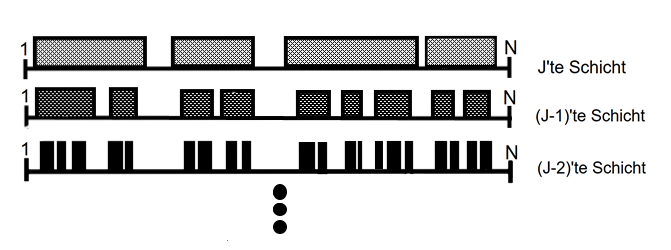}
 	\caption{Beispielhafte Darstellung der Konstruktion der Basisintervalle der J'ten, (J-1)'ten und (J-2)'ten Schicht. Die Abbildung ist angelehnt an die Darstellung in \cite[S.87]{Assani2003}.}
 	\label{Abbildung}
 \end{figure}
\noindent \todo{eventuell händisch einrücken} Wir fixieren nun für den gesamten Beweisschritt ein $y\in B\cap G$ mit \todo{schauen, dass die Grafik nicht komisch dazwischenrutscht} \begin{equation} \label{Gleichung411}|g(S^ny)|\leq ||g||_{L^\infty(Y,\nu)}, \, n=1, \dots, N.\end{equation}  (vgl. Bemerkung \ref{Bemerkung42}(2)) und zeigen die folgende Behauptung:
 \begin{Behauptung}
 	\label{Behauptung43}
 Seien die Voraussetzungen \eqref{Gleichung2.34d} bis \eqref{Gleichung2.39} erfüllt. Dann existieren Folgen $\big(c_n^j\big)_{n=1}^N\in \mathbb{C}^N, \ j=1, \dots, J$ und Basisintervalle $(l_1^j,m_1^j], \dots, (l_{\kappa_j}^j, m_{\kappa_j}^j]\subseteq \mathbb{N},\ j=1, \dots, J$, sodass für alle $j\in \{1, \dots, J\}$  die folgenden Eigenschaften erfüllt sind:
 \begin{itemize}
 	\item[(1)] Die Basisintervalle $(l_1^j,m_1^j], \dots, (l_{\kappa_j}^j, m_{\kappa_j}^j]$ der $j$'ten Schicht sind paarweise disjunkt und mit $n_j(\cdot)$ aus Behauptung \ref{Behauptung2.6} gilt $$S^{l_k^j}y\in B \text{ sowie } m_k^j-l_k^j=n_j(S^{l_k^j}y)\in (L_j, M_j), \ \ k=1, \dots, \kappa_j.$$
 	\item[(2)] Für $j<J$ liegen die Basisintervalle der $j$'ten Schicht innerhalb der Basisintervalle der ($j$+1)'ten Schicht.
 	\item[(3)] Für ein beliebiges Basisintervall $(l,m]$ der $j$'ten Schicht sowie $B_{m-l}^*$ aus Schritt II/\eqref{Gleichung3.22} gilt $$\big(c_{l+1}^k, c_{l+2}^k, \dots, c_{m}^k\big)\in B_{m-l}^*\text{ für alle }k\in \{j+1, \dots, J\}.$$
 	\item[(4)] Es gilt \begin{equation*}
 	c_n^j:=
 	\begin{cases}
 	f(T^{n-l_k^j}x), &n\in (l_k^j, m_k^j] \text{ für ein } k\in \{1, \dots, \kappa_j\},  \\
 	0, &\text{sonst.} 
 	\end{cases}
 	\end{equation*}
 	\item[(5)] Für die Dichte $p_j$ der Basisintervalle der $j$'ten Schicht in $[1, \dots, N]$, gegeben durch 
 		\begin{equation}
 	\label{Gleichung4.111}
 	p_j:=\frac{\sum_{k=1}^{\kappa_j} \#(l_k^j, m_k^j]}{N},
 	\end{equation} gilt
 	$$p_{j-1}>p_j-(J-j+2)\delta', \ \ \ j=2, \dots, J-1$$ sowie
 $$p_1>1-\frac{J\cdot(J+1)}{2}\delta'.$$
  \end{itemize}
 \end{Behauptung}
\noindent Bevor wir mit dem Beweis der Behauptung beginnen, definieren wir für $j\in \{1, \dots, J\}$ die Menge $\mathcal{C}_j$ komplementärer Indizes der $j$'ten Schicht als 
\begin{equation}
\label{Gleichung3.31}
\mathcal{C}_j:=[1, \dots, N]\setminus \Big(\bigcup_{k=1}^{\kappa_j} (l_k^j, m_k^j]\Big)
\end{equation}
sowie die Menge $E\subseteq [1, \dots, N]$ durch 
\begin{equation}
\label{Gleichung3.33}
E:=\Big\{n\in[1, \dots, N]\big| \ S^ny\in \bigcup\limits_{j=1}^K S^jB \Big\}.
\end{equation}
Darüber hinaus definieren wir für $\tilde{J}\in \{1, \dots, J-1\}$ und ein beliebiges Basisintervall $(a,b]$ der ($\tilde{J}$+1)'ten Schicht die Mengen 
$$O_j^{\tilde{J}}\subset (a,b],\ j=\tilde{J}+1, \tilde{J}+2, \dots, J$$ durch

\begin{equation} 
\label{Gleichung4.112}
O_j^{\tilde{J}}:=\Big\{k\in (a,b]\big| \ \big(c_k^j, c_{k+1}^j,\dots, c_{k+n-1}^j\big)\in B_n^* \text{ für alle } n\in (L_{\tilde{J}}, M_{\tilde{J}})\Big\}.
\end{equation}
Aus dem Kontext wird jeweils genau hervorgehen, welches Intervall $(a,b]$ wir zur Definition der Mengen $O_j^{\tilde{J}},\ j=\tilde{J}+1, \dots, J$ nutzen. 
\begin{Bemerkung}
	Wir werden die gerade eingeführten Definitionen für festes $j$ beziehungsweise $\tilde{J}$ im nachfolgenden Beweis erst verwenden, nachdem wir die zur Definition jeweils notwendigen Basisintervalle beziehungsweise Folgenglieder konstruiert haben.
\end{Bemerkung}

\begin{proof}[\textbf{Beweis von Behauptung \ref{Behauptung43}:}]
	 In den folgenden Abschnitten werden wir induktiv, beginnend mit der $J$'ten Schicht, die Basisintervalle sowie die Folgen $\big(c_n^j\big)_{n=1}^N, \ j=J, J-1, \dots, 1$ konstruieren.

\subsubsection{Konstruktion der J'ten Schicht}
\label{Abschnitt2.3.5.1}
\noindent Es bezeichne $l_1^J$ den ersten Index aus $\{1, \dots, N\}$,   für welchen $S^{l_1^J}y\in B$, für das in \eqref{Gleichung411} fixierte $y$, erfüllt ist. Die Existenz eines solchen Index zeigen wir mithilfe des folgenden Widerspruchsbeweises:\vspace{-0.2cm}
\begin{proof}[\textbf{Beweis der Existenz von $l_1^J$:}]
	Angenommen,  für alle $k\in \{1, \dots, N\}$ gilt $S^ky\not\in B.$ Wir zeigen zunächst für $K\overset{\eqref{Gleichung2.32}}{\underset{\eqref{Gleichung2.39}}{<}}N$ aus Behauptung \ref{Behauptung2.8}, dass für alle $k\in \mathbb{N}$ mit $K<k\leq N$ 
\begin{equation}
\label{Gleichung3.26}
\psi(S^ky)\overset{\text{Def.}}{=}\mathbbm{1}_{\cup_{j=1}^K S^j B}(S^ky)=0
\end{equation} erfüllt ist:\\ \noindent Angenommen, es existiert ein $k\in \{K+1, \dots, N\}$ mit $S^ky\in \bigcup_{j=1}^K S^jB$. Dann existiert ein $j\in \{1, \dots, K\}$ mit $S^ky \in S^jB$. Da das System $(Y,\mathfrak{B},\nu,S)$ invertierbar ist, folgt $S^{k-j}y\in B.$ Dies ist ein Widerspruch zur Annahme $S^ky \not\in B$ für alle $k\in \{1, \dots, N\}$.\\
\noindent  Folglich gilt wegen $N\overset{\eqref{Gleichung2.39}}{>} M_J > L_J>L_1$ 

\begin{equation}
\label{Gleichung2.36}
\frac{1}{N}\sum_{k=1}^N \psi(S^ky)\overset{\eqref{Gleichung3.26}}{=} \frac{1}{N}\sum_{n=1}^K \psi(S^ky) \leq\frac{K}{N}<\frac{K}{L_1} \overset{\eqref{Gleichung2.32}}{<}\frac{\delta'}{4}.
\end{equation}
\noindent Da wir $N>M_0$ sowie $y\in B\cap G\subseteq G$ gewählt haben, gilt nach \ref{Behauptung2.8}
$\big|\frac{1}{N} \sum_{k=1}^N \psi(S^ky)-1\big|<\frac{\delta'}{4}$ und folglich $$\frac{1}{N}\sum_{k=1}^N\psi(S^ky)>1-\frac{\delta'}{4}.$$
Weil nach Voraussetzung $\delta'\in (0,1)$ erfüllt ist, ist dies ein Widerspruch zu $(\ref{Gleichung2.36})$. Also existiert der Index $l_1^J$. \end{proof}\vspace{0.1cm}
\noindent Nach Wahl des Index $l_1^J$ gilt $S^{l_1^J}y\in B$. Für den Punkt $S^{l_1^J}y$ existiert folglich die Zahl $n_J(S^{l_1^J}y)\in (L_J,M_J)$ aus Schritt I/Behauptung \ref{Behauptung2.6}. Das erste Basisintervall der J'ten Schicht definieren wir nun als $$\big(l_1^J, m_1^J\big]:=\big(l_1^J, l_1^J+n_J(S^{l_1^J}y)\big]\subset \mathbb{N},$$ das heißt als linksoffenes Intervall mit Startpunkt $l_1^J$ und Länge $n_J(S^{l_1^J}y)$.
\noindent Die weiteren Basisintervalle der J'ten Schicht definieren wir induktiv:\\
\noindent Seien für $k>1$ die Basisintervalle 
$\big(l_1^J, m_1^J\big], \dots, \big(l_{k-1}^J, m_{k-1}^J\big]$ bereits konstruiert. Wir bezeichnen mit $l_{k}^J$ den ersten Index nach $m_{k-1}^J$, für welchen $S^{l_{k}^J}y\in B$  sowie $l_{k}^J+n_J(S^{l_{k}^J}y)\leq N$ erfüllt sind. (Falls es keinen solchen Index gibt, brechen wir die Konstruktion ab und setzen $\kappa_J=k-1$.) Das nächste Basisintervall der J'ten Schicht definieren wir als $$\big(l_k^J, m_k^J\big]:=\big(l_{k}^J, l_{k}^J + n_J(S^{l_{k}^J}y)\big].$$ Falls $l_{k}^J+n_J(S^{l_{k}^J}y)>N$ gilt, sind bereits alle Basisintervalle der J'ten Schicht gefunden und wir setzen ebenfalls $\kappa_J:=k-1$.\\ Innerhalb eines beliebigen Basisintervalls $\big(l_k^J, m_k^J\big]$  der J'ten Schicht setzen wir\vspace{-0.15cm} 
$$c_n^J:=f(T^{n-l_k^J}x), \ n\in \big(l_k^J, m_k^J\big].$$ Außerhalb der Basisintervall der J'ten Schicht definieren wir\vspace{-0.15cm}
$$c_n^J:=0.$$ 

\subsubsection{Dichte der Basisintervalle der J'ten Schicht in [1, \dots, N]}
Um die in Behauptung \ref{Behauptung43}(5) formulierte untere Abschätzung für die Dichte $p_1$ der Basisintervalle der ersten Schicht in $[1, \dots, N]$ beweisen zu können, benötigen wir eine untere Abschätzungen für die Dichte der Basisintervalle aller anderen Schichten. Wir beginnen mit der Suche nach einer unteren Abschätzung für die Dichte $p_J$ der Basisintervalle der J'ten Schicht in $[1, \dots, N]$. Bevor wir die zugehörige Behauptung \ref{Behauptung10} beweisen können, sind einige Vorbetrachtungen erforderlich:
\begin{Behauptung}
	\label{Behauptung12}
	
	Für ein beliebiges $\omega\in \bigcup_{j=1}^K S^jB$ existiert ein $k\in \{1, \dots, K+1\}$ derart, dass $S^k\omega\in B$ oder $S^k\omega\in Y\setminus \bigcup_{j=1}^K S^jB$ erfüllt ist. 
\end{Behauptung}

\begin{proof}[\textbf{Beweis:}] 
\noindent Betrachte $S^{K+1}\omega.$ Falls $S^{K+1}\omega\in Y\setminus \bigcup_{j=1}^K S^jB$ gilt, ist nichts zu zeigen. \\
\noindent Falls $S^{K+1}\omega\in \bigcup_{j=1}^K S^jB$ erfüllt ist, existiert ein $j\in \{1, \dots, K\}$ mit $S^{K+1}\omega\in S^jB.$ Aufgrund der Invertierbarkeit von $(Y,\mathfrak{B},\nu,S)$  folgt $S^{K+1-j}\omega\in B$. Für $k:=K+1-j$ gilt $k\in \{1, \dots, K\}$ sowie $S^k\omega\in B.$ \vspace{-0.15cm}
\end{proof}	
\noindent Analog dazu lässt sich auch die folgende Behauptung zeigen:
\begin{Behauptung}
	\label{Behauptung13}
Seien ein $k\in \mathbb{N}$ und $\omega\in Y\setminus \bigcup_{j=1}^K S^jB$ mit $S^k\omega\in \bigcup_{j=1}^K S^jB$ gegeben. Dann existiert ein $\mathbb{N}\ni k_0<k$ mit \vspace{-0.15cm}$$S^{k_0}\omega\in B.$$	
\noindent Das bedeutet, dass sich der $S$"=Orbit des Punktes $\omega$ so lange in $Y\setminus \bigcup_{j=1}^K S^jB$ befindet, bis ein Punkt aus $B$ erreicht wird.\end{Behauptung}
\noindent Mithilfe dieser beiden Behauptungen können wir Aussagen über den Abstand zwischen zwei beliebigen aufeinanderfolgenden Basisintervallen der J'ten Schicht gewinnen. Seien zwei beliebige benachbarte Basisintervalle der J'ten Schicht gegeben durch $(l_k,m_k]$ und $(l_{k+1},m_{k+1}]$. Es sei nochmals daran erinnert, dass wir in Behauptung \ref{Behauptung43} ein $y\in B\cap G$ fixiert haben. Wir können zwei Fälle unterscheiden: $S^{m_k}y\in \bigcup_{j=1}^K S^jB $ oder $S^{m_k}y\in Y\setminus \bigcup_{j=1}^K S^jB.$\vspace{0.25cm}\\
\noindent \textbf{Fall 1:} Es gelte $S^{m_k}y\in \bigcup_{j=1}^K S^jB:$\\
\noindent Wegen Behauptung \ref{Behauptung12} angewendet auf $S^{m_k}y\in B$ existiert ein $n\in \{1, \dots, K+1\}$ derart, dass $S^{m_k+n}y\in B$ oder $S^{m_k+n}y\in Y\setminus \bigcup_{j=1}^K S^jB$ erfüllt ist. Da $l_{k+1}$ nach Konstruktion der Basisintervalle der J'ten Schicht  der erste Index nach $m_k$ ist, für welchen $S^{l_{k+1}}y\in B$ erfüllt ist, bedeutet dies anschaulich: Der Abstand $l_{k+1}-m_k$ zwischen den Basisintervallen $(l_k,m_k]$ und $(l_{k+1},m_{k+1}]$ beträgt höchstens $K+1$ (wenn $S^{m_k+n}y\in B$ für ein $n\in \{1, \dots, K+1\}$ erfüllt ist) \textbf{oder} der $S$"=Orbit des Punktes $S^{m_k}y$ befindet sich nach höchstens $K+1$ Schritten in $Y\setminus \bigcup_{j=1}^K S^jB$.\vspace{0.1cm}\\
\noindent \textbf{Fall 2:} Es gelte $S^{m_k}y\in Y\setminus \bigcup_{j=1}^K S^jB$:\\
\noindent Weil $l_{k+1}$ der erste Index nach $m_k$ ist, für welchen $S^{l_{k+1}}y\in B$ erfüllt ist, gilt wegen Behauptung \ref{Behauptung13} für alle $n\in \mathbb{N}$ mit $m_k<n<l_{k+1}$
$$S^{m_k+n}\in Y\setminus \bigcup_{j=1}^K S^jB.$$ 
\noindent Dies bedeutet anschaulich, dass der $S$"=Orbit des Punktes $S^{m_k}y$ so lange die Menge $Y\setminus \bigcup_{j=1}^K S^jB$ durchläuft, bis das Basisintervall $(l_{k+1},m_{k+1}]$ beginnt.\vspace{0.35cm}\\ 
\noindent Aus Fall 1 in Verbindung mit Fall 2 folgt für den Abstand $l_{k+1}-m_k$ zwischen den beiden aufeinanderfolgenden Basisintervallen $(l_k,m_k]$ und $(l_{k+1},m_{k+1}]$
\begin{equation} \label{Gleichung4.46} l_{k+1}-m_k\leq K+1 \quad\ \ \textbf{ oder }\ \ \quad S^ny\in Y\setminus \bigcup_{j=1}^K S^jB, \ \ m_k+K+1<n<l_{k+1}.\end{equation}
\noindent Indem wir in der Fallunterscheidung den Punkt $S^{m_k}y$ durch den Punkt $y$ ersetzen, erhalten wir eine analoge Aussage für den Abstand bis zum ersten Basisintervall $(l_1,m_1]$. Es gilt 
\begin{equation} \label{Gleichung4.47}l_1\leq K+1\ \ \quad \textbf{ oder }\ \ \quad S^ny\in Y\setminus \bigcup_{j=1}^K S^jB,\  \    K+1<n<l_1.\end{equation} 
\noindent Es ist weiterhin notwendig, eine obere Abschätzung für den Abstand zwischen dem letzten Basisintervall und $N$ zu finden: Sei das letzte Basisintervall der J'ten Schicht gegeben durch $(l_\kappa, m_\kappa]$; der Abstand zwischen dem Intervall $(l_\kappa, m_\kappa]$ und $N$ ist also gegeben durch $N-m_\kappa$. Es bezeichne $l_{\kappa+1}$ den ersten Index nach $m_\kappa$, für welchen $S^{l_{\kappa+1}}y\in B$ erfüllt ist. Analog zu  \eqref{Gleichung4.46} können wir folgern, dass \vspace{-0.4cm}\begin{equation} \label{Gleichung4.48} l_{\kappa+1}-m_\kappa \leq K+1\ \ \quad \textbf{ oder }\ \ \quad S^ny \in Y\setminus \bigcup_{j=1}^K S^jB, \ \ m_\kappa + K+1 < n<l_{\kappa +1}\end{equation}\vspace{-0.6cm}\\ \noindent gilt. Da $(l_\kappa, m_\kappa]$ das letzte Basisintervall der J'ten Schicht ist, gilt nach Konstruktion der Basisintervalle $l_{\kappa+1}+n_J\big(S^{l_{\kappa+1}}y\big)>N.$ Wegen $n_J\big(S^{l_{\kappa+1}}y\big)\in (L_J,M_J)$ gilt also insbesondere $l_{\kappa+1}+M_J>N$. Zusammen mit \eqref{Gleichung4.48} folgt für den Abstand $N-m_\kappa$ zwischen dem letzten Basisintervall $(l_\kappa, m_{\kappa}]$ und $N$:\vspace{-0.2cm}
\begin{align} 
\label{Gleichung4.49}
&N-m_\kappa=(N-l_{\kappa +1})+(l_{\kappa+1}-m_\kappa)<M_J+ (K+1)\\
\nonumber \textbf{oder}\qquad\ & N-l_{\kappa+1}<M_J\ \text{ und } \ S^ny\in Y\setminus \bigcup_{j=1}^K S^jB,\ m_\kappa + (K+1)<n<l_{\kappa +1}.
\end{align}\vspace{-0.6cm}\\
\noindent Mithilfe dieser Vorbetrachtungen können wir eine untere Abschätzung für die Dichte der Basisintervalle der J'ten Schicht in $[1, \dots, N]$ finden:
\begin{Behauptung}
	\label{Behauptung10}
Für die Dichte $p_J$ dieser Basisintervalle in $[1, \dots, N]$ gilt \vspace{-0.15cm}$$p_J>1-\delta'.$$
\end{Behauptung}
\begin{proof}[\textbf{Beweis:}]
Aus der Definition der Menge $\mathcal{C}_J$ in \eqref{Gleichung3.31} sowie der Dichte $p_J$ in \eqref{Gleichung4.111} folgt
\vspace{-0.15cm}$$p_J=1-\frac{\# \mathcal{C}_J}{N}.$$ Um eine untere Abschätzung für $p_J$ zu gewinnen, genügt es folglich, eine obere Abschätzung für $\frac{\# \mathcal{C}_J}{N}$ zu finden. Dazu nutzen wir die in \eqref{Gleichung3.33} definierte Menge $E$ und erhalten\vspace{-0.15cm}
\begin{equation}
\label{Gleichung3.32}
\begin{split}
\frac{\# \mathcal{C}_J}{N}&=\frac{\# \bigg([1, l_1^J]\cup\Big( \bigcup\limits_{k=1}^{\kappa_J -1} (m_k^J, l_{k+1}^J]\Big)\cup (m_{\kappa_J}^J, N]\bigg)}{N}\\
&=\frac{\#\bigg(\Big([1, l_1^J]\cup \Big(\bigcup\limits_{k=1}^{\kappa_J -1} (m_k^J, l_{k+1}^J]\Big)\cup (m_{\kappa_J}^J, N]\Big)\cap \Big(E\cup \big([1, \dots, N]\setminus E\big)\Big)\bigg)}{N}\\
&\leq \underbrace{\frac{\#\Big([1, l_1^J]\cap E\Big)+\#\Big(\bigcup\limits_{k=1}^{\kappa_J-1} (m_k^J, l_{k+1}^J]\cap E\Big)+\#\Big((m_{\kappa_J}^J, N]\cap E\Big)}{N}}_{(I)}+\underbrace{\frac{\#\Big([1, \dots, N]\setminus E\Big)}{N}}_{(II)}. 
\end{split}
\end{equation}\vspace{-0.4cm}\\
\noindent Zu (I): Wegen \eqref{Gleichung4.46}, \eqref{Gleichung4.47} und \eqref{Gleichung4.49} gilt\vspace{-0.15cm}
\begin{equation} 
\label{Gleichung4.50}
(I)\leq \frac{(K+1)\ +\ \sum\limits_{k=1}^{\kappa_J-1} (K+1)\ + \ \big(M_J+ K+1\big)}{N}=\frac{(\kappa_J+1) \cdot (K+1)}{N}+\frac{M_J}{N}.
\end{equation}
Zu (II): Wegen $y\in B\cap G \subseteq G$ sowie $N\overset{\eqref{Gleichung2.39}}{>}M_J>L_1\overset{\eqref{Gleichung2.32}}{>}M_0$, gilt nach Behauptung \ref{Behauptung2.8}\vspace{-0.15cm}
\begin{equation} 
\label{Gleichung4.51}
\begin{split}
\frac{\#\Big([1, \dots, N]\setminus E\Big)}{N}
&=\frac{\sum\limits_{n=1}^N \mathbbm{1}_{Y\setminus \bigcup_{j=1}^K S^jB} (S^ny)}{N}
=\frac{N-\sum\limits_{n=1}^N \mathbbm{1}_{\bigcup_{j=1}^K S^jB}(S^ny)}{N}\\
&\hspace{-0.2cm}\overset{\text{Def.}}{\underset{\text{von }\psi}{=}}1-\frac{1}{N}\sum\limits_{n=1}^N \psi(S^ny) <\frac{\delta'}{4}.
\end{split}
\end{equation}
\noindent Weil die Länge der Basisintervalle der J'ten Schicht eine Zahl aus $(L_J,M_J)$ ist, ist für die Anzahl $\kappa_J$ dieser Basisintervalle\vspace{-0.1cm}
\begin{equation} 
\label{Gleichung4.52}
\kappa_J < \frac{N}{L_J}
\end{equation}
erfüllt. Zusammen mit  \eqref{Gleichung4.50} und \eqref{Gleichung4.51} folgt in \eqref{Gleichung3.32}\vspace{-0.2cm}
\begin{equation}
\label{Gleichung4.53}
\begin{split}
\frac{\# \mathcal{C}_J}{N}&<\frac{(\kappa_J+1) \cdot (K+1)}{N}+\frac{M_J}{N}+\frac{\delta'}{4}\\
&\hspace{-0.2cm}\overset{\eqref{Gleichung4.52}}{\leq} \frac{(\frac{N}{L_J}+1)\cdot (K+1)}{N}+\frac{M_J}{N}+\frac{\delta'}{4}\\
&=\underbrace{\frac{K+1}{L_J}}_{<\frac{K+1}{L_1}}+\underbrace{\frac{K+1}{N}}_{<\frac{K+1}{L_1}}+\underbrace{\frac{M_J}{N}}_{\overset{\eqref{Gleichung2.39}}{<}\frac{\delta'}{4}}+\frac{\delta'}{4}\\
&\hspace{-0.2cm}\overset{\eqref{Gleichung2.32}}{<}\delta'.
\end{split}
\end{equation}
Damit erhalten wir für die Dichte $p_J=1-\frac{\#\mathcal{C}_J}{N}$ der Basisintervalle der J'ten Schicht in $[1, \dots, N]$\vspace{-0.15cm} $$p_J>1-\delta'.$$ \noindent Die Behauptung \ref{Behauptung10} ist damit bewiesen.
\end{proof}

\subsubsection{Die (J-1)'te Schicht}
Die Basisintervalle der (J-1)'ten Schicht konstruieren wir  innerhalb der Basisintervalle der J'ten Schicht.  Wir definieren sie ähnlich wie die Basisintervalle der J'ten Schicht, jedoch mit dem Unterschied, dass die Basisintervalle der (J-1)'ten Schicht zusätzlich eine sogenannte Orthogonalitätsbedingung erfüllen müssen.\\
\noindent Sei ein beliebiges Basisintervall der J'ten Schicht gegeben durch $(l,m]$. Die Basisintervalle der (J-1)'ten Schicht innerhalb dieses Intervalls konstruieren wir nun folgendermaßen: Es bezeichne $l_1^{J-1}$ den ersten Index aus $(l,m]$, für welchen $S^{l_1^{J-1}}y\in B$ erfüllt ist. Weiterhin gelte für die Folgenglieder $c_n^J, n\hspace{-0.1cm}\in\hspace{-0.1cm} \big(l_1^{J-1}, l_1^{J-1}\hspace{-0.1cm}+\hspace{-0.1cm}n_{J-1}(S^{l_1^{J-1}}y)\big]$ der bereits konstruierten Folge $\big(c_n^J\big)_{n=1}^N$
\vspace{-0.15cm}
\begin{equation}
\label{Gleichung3.35}
\big(c_{l_1^{J-1}+1}^J, c_{l_1^{J-1}+2}^J, \dots, c_{l_1^{J-1}+n_{J-1}(S^{l_1^{J-1}}y)}^J\big)\in B_{n_{J-1}(S^{l_1^{J-1}}y)}^*.
\end{equation}
\noindent  Dann ist das erste Basisintervall der (J-1)'ten Schicht in $(l,m]$ gegeben durch $$\big(l_1^{J-1},m_1^{J-1}\big]:=\big(l_1^{J-1}, l_1^{J-1}+n_{J-1}(S^{l_1^{J-1}}y)\big].$$ Die Anforderung \eqref{Gleichung3.35} bezeichnen wir nachfolgend als Orthogonalitätsbedingung, die Motivation für diese Begriffsbildung ergibt sich aus dem sich anschließenden Abschnitt. Die Existenz des Index folgt analog zu \eqref{Gleichung2.36}, indem man $N$ durch $b-a>L_J>L_1$ ersetzt, in Verbindung mit dem Beweis  zu der später formulierten Behauptung \ref{Behauptung10b}.\\
Seien für $k>2$ die Basisintervalle $\big(l_1^{J-1}, m_1^{J-1}\big], \big( l_2^{J-1}, m_2^{J-1}\big], \dots, \big(l_{k-1}^{J-1}, m_{k-1}^{J-1}\big]$ bereits konstruiert. Wir definieren nun $l_{k}^{J-1}$ als den ersten Index nach $m_{k-1}^{J-1}$, für welchen $S^{l_{k}^{J-1}}y\in B$ sowie die Orthogonalitätsbedingung $$\big(c_{l_{k}^{J-1}+1}^J, c_{l_{k}^{J-1}+2}^J, \dots, c_{l_{k}^{J-1}+n_{J-1}(S^{l_{k}^{J-1}}y)}^J\big)\in B_{n_{J-1}(S^{l_{k}^{J-1}}y)}^*$$ erfüllt ist. (Existiert kein solcher Index, brechen wir die Konstruktion im Basisintervall $(l,m]$ ab.) Falls weiterhin $\big(
l_k^{J-1}, l_k^{J-1}+n_{J-1}(S^{l_k^{J-1}}y)\big] \subseteq \big(l,m\big]$ gilt, ist das $k$'te Basisintervall der (J-1)'ten Schicht in $\big(l,m\big]$ gegeben durch $$\big(l_k^{J-1}, m_k^{J-1}\big]:=\big(l_k^{J-1},l_k^{J-1} + n_{J-1}(S^{l_{k}^{J-1}}y)\big].$$ \noindent (Andernfalls bricht die Konstruktion im Basisintervall $(l,m]$ ebenfalls ab.)
\noindent Wir wiederholen diese Konstruktion der Basisintervalle der (J-1)'ten Schicht im Basisintervall $(l,m]$  nun für alle weiteren Basisintervalle der J'ten Schicht und definieren $\kappa_{J-1}$ als die Anzahl der Basisintervalle der (J-1)'ten Schicht in $[1, \dots, N]$.\\
\noindent Für ein beliebiges Basisintervall $\big(
l_k^{J-1}, m_k^{J-1}\big]$ der (J-1)'ten Schicht definieren wir \begin{equation} \label{Gleichung10.1} c_n^{J-1}:=f(T^{n-l_k^{J-1}}x), \ n\in \big(l_k^{J-1}, m_k^{J-1}\big].\end{equation} Außerhalb der Basisintervalle der (J-1)'ten Schicht setzen wir $$c_n^{J-1}:=0.$$
\noindent Der nachfolgende Abschnitt motiviert die Begriffsbildung "`Orthogonalitätsbedingung"'. 
\subsubsection*{Orthogonalität}
In Schritt II haben wir die Menge $C\in \mathfrak{A}$ mit $\mu(C)>1-\delta''$ sowie ein $N_\delta\in \mathbb{N}$ eingeführt, sodass für alle $\tilde{N}>N_\delta$ und alle $\xi \in C$\vspace{-0.15cm}
\begin{equation}
\label{Gleichung3.27}
\Big|\frac{1}{\tilde{N}}\sum_{n=1}^{\tilde{N}} f(T^nx)\overline{f(T^n\xi)}\Big|<\delta
\end{equation}
erfüllt ist.\\
\noindent Für ein beliebiges Basisintervall $\big(l, l+n_{J-1}(S^ly)\big]$ der (J-1)'ten Schicht ist nach Konstruktion  $\big(c_{l+1}^J,$ $c_{l+2}^J, \dots, c_{l+n_{J-1}(S^ly)}^J\big)\in B_{n_{J-1}(S^ly)}^*$ erfüllt. 
 Weiterhin gilt nach Behauptung \ref{Behauptung2.6} $$(L_{J-1}, M_{J-1})\ni n_{J-1}(S^ly)>L_{J-1}>L_1\overset{(\ref{Gleichung2.33})}{>}N_\delta.$$ Aufgrund der Definition von $B^*_{n_{J-1}(S^ly)}$ in Schritt II/\eqref{Gleichung3.22} sind deshalb
\begin{equation} 
\label{Gleichung3.28}
\big(c_{l+1}^J, c_{l+2}^J, \dots, c_{l+n_{J-1}(S^ly)}^J\big)=\big(f(T\xi), f(T^2\xi), \dots, f(T^{n_{J-1}(S^ly)}\xi\big) \text{ für ein }\xi\in C
\end{equation} sowie

\begin{equation}
\label{Gleichung4.61}
\begin{split}
\left|\frac{1}{n_{J-1}(S^ly)}\sum\limits_{n=l+1}^{l+n_{J-1}(S^ly)}c_n^{J-1}\overline{c_n^J}\right|&\overset{\eqref{Gleichung3.28}}{=}
\left|\frac{1}{n_{J-1}(S^ly)}\sum\limits_{n=1}^{n_{J-1}(S^ly)}c_{n+l}^{J-1}\overline{f(T^n\xi)}\right|\\
&\overset{\eqref{Gleichung10.1}}{=}\left|\frac{1}{n_{J-1}(S^ly)}\sum\limits_{n=1}^{n_{J-1}(S^ly)}f(T^nx)\overline{f(T^n\xi)}\right|\\
&\overset{\eqref{Gleichung3.27}}{<}\delta 
\end{split}
\end{equation}
erfüllt. Aus diesem Grund bezeichnen wir $(l, l+n_{J-1}(S^ly)]$ als zur J'ten Schicht orthogonales Intervall und $l$ als orthogonalen Index der (J-1)'ten Schicht.\vspace{-0.3cm}\\ 

\noindent Mithilfe von Schritt II/Behauptung \ref{Behauptung3.21}
	können wir eine untere Abschätzung für die Dichte der orthogonalen Indizes $l$ in einem beliebigen Basisintervall $(a,b]$ der J'ten Schicht finden:\\ 
\noindent Nach Konstruktion der Basisintervalle der J'ten Schicht gilt $b-a \in (L_J,M_J)$. Insbesondere ist also $b-a > L_J\overset{\eqref{Gleichung2.34b}}{>}N(M_{J-1})$ erfüllt. Nach Schritt II/Behauptung \ref{Behauptung3.21} gilt folglich
	\begin{equation} 
	\label{Gleichung3.30}	
	\frac{1}{b-a}\cdot \# \left\{k\in \{1, \dots, b-a\}\Big| \ \big(f(T^kx),\dots, f(T^{k+n-1}x)\big)\in B_n^*\ \forall n\in (L_{J-1},M_{J-1})\right\}>1-\delta''.
	\end{equation}
	
\noindent Nach Konstruktion der Folge $\big(c_k^J\big)_{k=1}^N$ im Abschnitt \ref{Abschnitt2.3.5.1} ist für alle $k\in (a, b]$
$$c_k^J=f(T^{k-a}x)$$ erfüllt. Da für alle $n\in (L_{J-1}, M_{J-1})$ insbesondere $n<M_{J-1}$ erfüllt ist, gilt für alle $k\in \big(a, \dots, b-M_{J-1}\big]$ und alle $n\in (L_{J-1}, M_{J-1})$:
\begin{equation} 
\label{Gleichung3.29}\big(c_k^J, c_{k+1}^J, \dots, c_{k+n-1}^J\big)=\big(f(T^{k-a}x), f(T^{k+1-a}x),\dots, f(T^{k+n-1 -a}x)\big).
\end{equation}

\noindent Damit erhalten wir
\begin{equation}
\label{Gleichung2.48}
\begin{split}
 \frac{\#(O_J^{J-1})}{b-a}&\hspace{-0.15cm}\overset{\eqref{Gleichung4.112}}{=} \frac{\#\big\{k\in (a,b]\big| \ \big(c_k^J, c_{k+1}^J,\dots, c_{k+n-1}^J\big)\in B_n^*\ \forall n\in (L_{J-1}, M_{J-1})\big\}}{b-a}\\
&\hspace{0.1cm}\geq \frac{\#\big\{k\in (a,b-M_{J-1}]\big| \ \big(c_k^J, c_{k+1}^J,\dots, c_{k+n-1}^J\big)\in B_n^*\ \forall n\in (L_{J-1}, M_{J-1})\big\}}{b-a}\\
&\hspace{-0.1cm}\overset{\eqref{Gleichung3.29}}{=}\hspace{-0.2cm} \frac{\big\{k\in \{1, \dots, b\hspace{-0.05cm}-\hspace{-0.05cm}a\hspace{-0.05cm}-\hspace{-0.05cm}M_{J-1}\}\big|\hspace{-0.05cm}\big(f(T^kx),\dots, f(T^{k+n-1}x)\big)\hspace{-0.05cm}\in \hspace{-0.05cm} B_n^*\ \forall n \hspace{-0.05cm} \in \hspace{-0.05cm} (L_{J-1}, M_{J-1})\big\}}{b-a}\\
&\hspace{-0.1cm}\overset{\eqref{Gleichung3.30}}{\geq}1-\delta'' - \frac{M_{J-1}}{\underbrace{b-a}_{\in (L_J, M_J)}}\\
&\hspace{0.1cm}> 1-\delta''-\frac{M_{J-1}}{L_J}\\
&\hspace{-0cm}\hspace{-0.1cm}\overset{\eqref{Gleichung2.34b}}{>}1-\delta''-\delta''=1-2\delta''.
\end{split}
\end{equation}
Das bedeutet, dass die Menge aller Indizes $k\in (a,b]$, für welche $$\big(c_k^J, c_{k+1}^J, \dots, c_{k+n-1}^J\big)\in B_n^*\text{ für alle } n\in (L_{J-1},M_{J-1})$$ erfüllt ist, eine Dichte $>1-2\delta''$ im Intervall $(a,b]$ hat. Diese untere Abschätzung für die Dichte der orthogonalen Indizes im beliebig gewählten Basisintervall $(a,b]$ der J'ten Schicht benutzen wir nun, um eine untere Abschätzung für die Dichte der Basisintervalle der (J-1)'ten Schicht in $[1, \dots, N]$ zu finden.

\subsubsection*{Dichte der Basisintervalle der (J-1)'ten Schicht in [1, \dots, N]}
Um eine untere Abschätzung für die Dichte $p_{J-1}$ der Basisintervalle der (J-1)'ten Schicht in $[1, \dots, N]$ zeigen zu können, beweisen wir zunächst das folgende Hilfsresultat:
 \begin{Behauptung}
	\label{Behauptung11}
	Sei $(a,b] \subset [1, \dots, N]$ ein beliebiges Basisintervall der J'ten Schicht. Dann gilt für die in  \eqref{Gleichung3.31} und \eqref{Gleichung3.33} eingeführten Mengen $\mathcal{C}_{J-1}, \mathcal{C}_{J}$ und $E$
	$$\frac{\#\Big(\big(\mathcal{C}_{J-1}\setminus \mathcal{C}_{J}\big)\cap E\cap (a,b]\Big)}{b-a}<\frac{7}{4}\delta'.$$ 
\end{Behauptung}
\begin{proof}[\textbf{Beweis:}]
	Da wir die Basisintervalle der (J-1)'ten Schicht innerhalb der Basisintervalle der J'ten Schicht konstruiert haben, ist insbesondere $\mathcal{C}_J\subset \mathcal{C}_{J-1}$ erfüllt. Zunächst suchen wir eine obere Abschätzung für die Dichte der Menge $\#\big((\mathcal{C}_{J-1}\setminus \mathcal{C}_J)\cap E\big)$ in $(a,b]$ unter der Annahme, die Basisintervalle der (J-1)'ten Schicht wären \textbf{ohne} Beachtung der Orthogonalitätsbedingung konstruiert worden. Analog zum Beweis der oberen Abschätzung für $\frac{\#(\mathcal{C}_J\cap E)}{N}$ in \eqref{Gleichung3.32} bis  \eqref{Gleichung4.53} können wir unter dieser Annahme zeigen, dass
	\begin{equation}
	\label{Gleichung4.65}
	\frac{\#\Big(\big(\mathcal{C}_{J-1}\setminus \mathcal{C}_{J}\big)\cap E\cap (a,b]\Big)}{b-a}< \frac{\frac{b-a}{L_{J-1}}(K+1)}{b-a}+\frac{M_{J-1}}{b-a}+\frac{K+1}{b-a}
	<\frac{3}{4}\delta'
	\end{equation}
	gilt.\\ \noindent Nach Konstruktion erfüllen jedoch alle Basisintervalle der (J-1)'ten Schicht eine Orthogonalitätsbedingung. Deshalb erhöht sich die obere Abschätzung in \eqref{Gleichung4.65}:\\
	\noindent Seien $(l,m] \subset (a,b]$ ein beliebiges Basisintervall der (J-1)'ten Schicht in $(a,b]$ und $l'>m$ der erste Index nach $m$, für welchen $S^{l'}y\in B$ erfüllt ist. Definiere $m':=l'+n_{J-1}(S^{l'}y)$. Das Intervall $(l',m']$ ist  ein Basisintervall der (J-1)'ten Schicht in $(a,b]$, wenn das  Intervall $(l',m']$ die Orthogonalitätsbedingung  $$\big(c_{l'+1}^J,c_{l'+2}^J, \dots, c_{m'}^J\big)\in B_{m'-l'}^*$$ sowie $(l',m']\subset (a,b]$ erfüllt. Gilt die Orthogonalitätsbedingung für das Intervall $(l',m']$ nicht, erhalten wir mit der in \eqref{Gleichung4.112} definierten Menge $O_J^{J-1}$ \begin{equation} \label{Gleichung4.114} l'\in (a,b]\setminus O_J^{J-1}\subset (a,b]\end{equation} 
	\noindent  und wir fahren mit der Suche nach dem nächsten Basisintervall der (J-1)'ten Schicht  in $(a,b]$ fort: Sei $l''>l'$ der erste Index nach $l'$ für welchen $S^{l''}y\in B$ erfüllt ist. Analog zum Beweis von $\eqref{Gleichung4.46}$ schließen wir, dass 
	\begin{equation} 
	\label{Gleichung3.37}
	l''-l'\leq K+1 \text{\textbf{ oder }} (l'+K+1, l'')\subset\ (a,b]\setminus E
	\end{equation} 
	\noindent erfüllt ist.	Deshalb erhöht sich  die obere Abschätzung für $\frac{\#\big((\mathcal{C}_{J-1}\setminus \mathcal{C}_{J})\cap E\cap (a,b]\big)}{b-a}$ in \eqref{Gleichung4.65} für einen Index $l'\in (a,b]\setminus O_J^{J-1}$  um höchstens $\frac{K+1}{b-a}$. Da die Anzahl dieser Indizes durch $\#\big( (a,b]\setminus O_J^{J-1}\big)$ begrenzt ist, erhöht sich die obere Abschätzung in \eqref{Gleichung4.65} folglich um höchstens
	\begin{equation}
	\label{Gleichung4.69}
	\frac{\#\Big((a, b]\setminus O_{J}^{J-1}\Big)\cdot (K+1)}{b-a}
	=\Big(1-\frac{\#O_J^{J-1}}{b-a}\Big)\cdot (K+1)
	\overset{\eqref{Gleichung2.48}}{<}2\delta''\cdot (K+1)
	\overset{\eqref{Gleichung2.32b}}{<}\delta'. 
	\end{equation} 
	  Damit erhalten wir\vspace{-0.15cm}
	\begin{equation*}
	\frac{\#\Big(\big(\mathcal{C}_{J-1}\setminus \mathcal{C}_{J}\big)\cap E\cap (a,b]\Big)}{b-a}\overset{\eqref{Gleichung4.65}}{\underset{\eqref{Gleichung4.69}}{<}}\frac{3}{4}\delta'+\delta'=\frac{7}{4}\delta'.\vspace{-1cm}
	\end{equation*} 
\end{proof}

\noindent Aufbauend auf diesem Hilfsresultat beweisen wir die folgende Behauptung:
\begin{Behauptung}
	\label{Behauptung10b} \samepage{
Für die Dichte $p_{J-1}$ der Basisintervalle der (J-1)'ten Schicht in $[1, \dots, N]$ gilt $$p_{J-1}>1-3\delta'.$$}
\end{Behauptung}
\begin{proof}[\textbf{Beweis:}]
 \noindent Aufgrund der Definition von $\mathcal{C}_{J-1}$ gilt
\begin{equation}
\label{Gleichung3.34}
1-p_{J-1}=\frac{\# \mathcal{C}_{J-1}}{N}=\frac{\#\Big(\mathcal{C}_J\cup \big(\mathcal{C}_{J-1}\setminus \mathcal{C}_J\big)\Big)}{N}
=\frac{\#\mathcal{C}_J}{N}+\frac{\#\big(\mathcal{C}_{J-1}\setminus \mathcal{C}_J\big)}{N}
\overset{\eqref{Gleichung4.53}}{<}\delta'+\frac{\#\big(\mathcal{C}_{J-1}\setminus \mathcal{C}_J\big)}{N}.
\end{equation}
\noindent Es folgt
 \begin{equation}
 \label{Gleichung3.36}
\begin{split}
\frac{\# \big(\mathcal{C}_{J-1}\setminus \mathcal{C}_J\big)}{N}&
=\frac{\#\Big(\big(\mathcal{C}_{J-1}\setminus \mathcal{C}_J\big)\cap E\Big)}{N}+\frac{\# \Big(\big(\mathcal{C}_{J-1}\setminus \mathcal{C}_J\big)\cap \big([1, \dots, N]\setminus E\big)\Big)}{N}\\
&\leq \frac{\#\Big(\big(\mathcal{C}_{J-1}\setminus \mathcal{C}_J\big)\cap E\Big)}{N} + \frac{\#\big([1, \dots, N]\setminus E\big)}{N}.
\end{split}
 \end{equation}
 Zusammen mit \eqref{Gleichung4.51} erhalten wir
 \begin{equation} 
 \label{Gleichung3.39}
 \frac{\# \big(\mathcal{C}_{J-1}\setminus \mathcal{C}_J\big)}{N}<\frac{\#\Big(\big(\mathcal{C}_{J-1}\setminus \mathcal{C}_J\big)\cap E\Big)}{N}+\frac{\delta'}{4}.
 \end{equation} 
  Um eine untere Abschätzung für $p_{J-1}$ zu finden, reicht es also aus,  eine obere Abschätzung für $\frac{\#\big((\mathcal{C}_{J-1}\setminus \mathcal{C}_J)\cap E\big)}{N}$ zu gewinnen.
\noindent Nach Definition der Mengen $\mathcal{C}_J$ und $\mathcal{C}_{J-1}$ in \eqref{Gleichung3.31} ist jedes Element aus $\mathcal{C}_{J-1}\setminus \mathcal{C}_{J}$  ein Element aus einem Basisintervall der J'ten Schicht. Da die Abschätzung in Behauptung \ref{Behauptung11} für ein beliebiges Basisintervall der J'ten Schicht gilt, folgt
\begin{equation} \label{Gleichung4.60}\frac{\#\Big(\big(\mathcal{C}_{J-1}\setminus \mathcal{C}_J\big)\cap E\Big)}{N}< \frac{7}{4}\delta'.\end{equation}
 Damit erhalten wir
\begin{equation}
\label{Gleichung4.73}
\begin{split}
1-p_{J-1}&\overset{\eqref{Gleichung3.34}}{\underset{\eqref{Gleichung3.39}}{<}}\delta' + \frac{\#\Big(\big(\mathcal{C}_{J-1}\setminus \mathcal{C}_J\big)\cap E\Big)}{N}+\frac{\delta'}{4}\\
&\overset{\eqref{Gleichung4.60}}{<}3\delta'
\end{split}
\end{equation}
und folglich
\begin{equation*} 
p_{J-1}>1-3\delta'.\qedhere
\end{equation*}
\end{proof}

\subsubsection{Konstruktion und Dichte der weiteren Schichten}
\label{Abschnitt2.3.5.6}
Die weiteren Basisintervalle sowie die Folgen $\big(c_n^j\big)_{n=1}^N$ für $j<J-1$ definieren wir induktiv: \\
\noindent Seien für $1<\tilde{J}\leq J-1
$ bereits die Basisintervalle der $\tilde{J}$'ten, ($\tilde{J}$+1)'ten, ..., (J-1)'ten und der J'ten Schicht sowie die Folgenglieder $\big(c_n^j\big)_{n=1, \dots, N}^{j=\tilde{J}, \dots, J}$ konstruiert. Die Basisintervalle der ($\tilde{J}$-1)'ten Schicht definieren wir analog zur Konstruktion der (J-1)'ten Schicht innerhalb der Basisintervalle der $\tilde{J}$'ten Schicht und orthogonal zu allen vorherigen Schichten so, dass die Bedingungen (1)-(4) aus Behauptung \ref{Behauptung43} mit $j=\tilde{J}-1$ erfüllt sind.
Aufbauend auf dem vorherigen Abschnitt können wir nun eine untere Abschätzung für die Dichte $p_{\tilde{J}-1}$ der Basisintervalle der ($\tilde{J}$-1)'ten Schicht in $[1, \dots, N]$ zeigen:
\begin{Behauptung}
	\label{Behauptung4.44}
	Sei $1<\tilde{J}\leq J-1$. Dann gilt für die Dichte $p_{\tilde{J}-1}$ $$p_{\tilde{J}-1}>p_{\tilde{J}}-(J-\tilde{J}+2)\delta'$$ \noindent sowie für die Dichte $p_1$
	\begin{equation*}
	p_1>1-\frac{J\cdot (J+1)}{2}\delta'.
	\end{equation*}
	\end{Behauptung}
\begin{proof}[\textbf{Beweis:}]
	\renewcommand{\qedsymbol}{}
  \noindent Nach Definition der Mengen $\mathcal{C}_{\tilde{J}}$ und $\mathcal{C}_{\tilde{J}-1}$ in \eqref{Gleichung3.31} gilt
  \begin{equation}
  \label{Gleichung4.67}
  1-p_{\tilde{J}-1}=\frac{\# \mathcal{C}_{\tilde{J}-1}}{N}=\frac{\# \Big(\mathcal{C}_{\tilde{J}}\cup (\mathcal{C}_{\tilde{J}-1}\setminus \mathcal{C}_{\tilde{J}})\Big)}{N}=\frac{\# \mathcal{C}_{\tilde{J}}}{N}+\frac{\# \big(\mathcal{C}_{\tilde{J}-1}\setminus\mathcal{C}_{\tilde{J}}\big)}{N}=1-p_{\tilde{J}}+\frac{\#\big(\mathcal{C}_{\tilde{J}-1}\setminus\mathcal{C}_{\tilde{J}}\big)}{N}.
  \end{equation}
  \noindent Indem wir in den Berechnungen \eqref{Gleichung3.36} und (\ref{Gleichung3.39}) $J$ durch $\tilde{J}$ ersetzen, erhalten wir:
  \begin{equation} 
  \label{Gleichung4.76}
  \frac{\# \big(\mathcal{C}_{\tilde{J}-1}\setminus \mathcal{C}_{\tilde{J}}\big)}{N}<\frac{\#\Big(\big(\mathcal{C}_{\tilde{J}-1}\setminus \mathcal{C}_{\tilde{J}}\big)\cap E\Big)}{N}+\frac{\delta'}{4}.
  \end{equation}
Zum Beweis der Behauptung ist es also ausreichend, eine obere Abschätzung für $\frac{\#\big((\mathcal{C}_{\tilde{J}-1}\setminus \mathcal{C}_{\tilde{J}})\cap E\big)}{N}$ zu finden. Wie in Behauptung \ref{Behauptung11} zeigen wir dazu für ein beliebiges Basisintervall $(a,b]$ der $\tilde{J}$'ten Schicht eine obere Abschätzung für $\frac{\#\big((\mathcal{C}_{\tilde{J}-1}\setminus \mathcal{C}_{\tilde{J}})\cap E\cap (a,b]\big)}{b-a}$:\\
 \noindent Zunächst sei angenommen, die Basisintervalle der ($\tilde{J}$-1)'ten Schicht wären ohne Beachtung der Orthogonalitätsbedingung konstruiert worden. 
  Indem wir in \eqref{Gleichung4.65} $J$ durch $\tilde{J}$ ersetzen, erhalten wir 
  \begin{equation}
  \label{Gleichung4.68}
\frac{\#\Big(\big(\mathcal{C}_{\tilde{J}-1}\setminus \mathcal{C}_{\tilde{J}}\big)\cap E\cap (a,b]\Big)}{b-a}< \frac{\frac{b-a}{L_{\tilde{J}-1}}(K+1)}{b-a}+\frac{M_{\tilde{J}-1}}{b-a}+\frac{K+1}{b-a}
<\frac{3}{4}\delta'.
\end{equation}
 \noindent  Nach Konstruktion erfüllen jedoch alle Basisintervalle der ($\tilde{J}$-1)'ten Schicht die Orthogonalitätsbedingung aus Behauptung \ref{Behauptung43}(3), deshalb erhöht sich die obere Abschätzung in  \eqref{Gleichung4.68}. Um diese Erhöhung zu quantifizieren, nutzen wir die in \eqref{Gleichung4.112} definierten Mengen $O_j^{\tilde{J}-1}\subset (a,b],\ \tilde{J}\leq j\leq J$. Wie in \eqref{Gleichung2.48} können wir für $\tilde{J}\leq j \leq J$ zeigen, dass
  $\frac{\# O_j^{\tilde{J}-1}}{b-a}>1-2\delta''$ und folglich\vspace{-0.15cm}
  \begin{equation} 
  \label{Gleichung4.70}\frac{\# \big( (a,b] \setminus O_j^{\tilde{J}-1}\big)}{b-a}<2\delta''
  \end{equation} 
  \noindent erfüllt ist. Analog zu \eqref{Gleichung4.69} argumentieren wir, dass sich die obere Abschätzung in \eqref{Gleichung4.68} durch die Bedingung, dass die Basisintervalle der ($\tilde{J}$-1)'ten Schicht orthogonal zu allen vorherigen Schichten konstruiert werden müssen, für jede dieser Schichten $(j=\tilde{J}, \dots, J)$ um maximal $\frac{\# \big( (a,b] \setminus O_j^{\tilde{J}-1}\big)\cdot (K+1)}{b-a}$ erhöht, insgesamt also um höchstens \vspace{-0.15cm}
  $$\sum_{j=\tilde{J}}^J \frac{\# \big( (a,b] \setminus O_j^{\tilde{J}-1}\big)\cdot (K+1)}{b-a}\overset{\eqref{Gleichung4.70}}{<}\sum_{j=\tilde{J}}^{J}  \underbrace{2\delta''\cdot (K+1)                }_{\overset{\eqref{Gleichung2.32b}}{<}\delta'}<  \sum_{j=\tilde{J}}^J \delta'=(J-\tilde{J}+1)\delta'.$$
  \noindent Deshalb ist \vspace{-0.15cm}
  \begin{equation}
   \label{Gleichung4.78}
  \frac{\#\Big(\big(\mathcal{C}_{\tilde{J}-1}\setminus \mathcal{C}_{\tilde{J}}\big)\cap E\cap (a,b]\Big)}{b-a}<\frac{3}{4}\delta'+ (J-\tilde{J}+1) \delta'\end{equation}
  \noindent erfüllt. 
 \noindent  Da nach Konstruktion der Mengen $\mathcal{C}_{\tilde{J}-1}$ und $\mathcal{C}_{\tilde{J}}$ jedes Element aus $\mathcal{C}_{\tilde{J}-1}\setminus \mathcal{C}_{\tilde{J}}$ ein Element aus einem Basisintervall der $\tilde{J}$'ten Schicht ist und die Ungleichung \eqref{Gleichung4.78} für ein beliebiges Basisintervall $(a,b]$ der $\tilde{J}$'ten Schicht gilt, folgt
 \vspace{-0.15cm}
\begin{equation} 
\label{Gleichung4.79}
\frac{\#\Big(\big(\mathcal{C}_{\tilde{J}-1}\setminus \mathcal{C}_{\tilde{J}}\big)\cap E\Big)}{N}<\frac{3}{4}\delta'+ (J-\tilde{J}+1) \delta'.
\end{equation}
\noindent Damit erhalten wir die folgende Abschätzung für die Dichte $p_{\tilde{J}-1}$:\vspace{-0.15cm}
\begin{equation} 
\label{Gleichung4.80}
\begin{split} 
1-p_{\tilde{J}-1}&\overset{\eqref{Gleichung4.67}}{=}1-p_{\tilde{J}}+\frac{\# \big(\mathcal{C}_{\tilde{J}-1}\setminus \mathcal{C}_{\tilde{J}}\big)}{N}\\
&\overset{\eqref{Gleichung4.76}}{<}1-p_{\tilde{J}}+\frac{\#\Big(\big(\mathcal{C}_{\tilde{J}-1} \setminus \mathcal{C}_{\tilde{J}}\big)\cap E\Big)}{N}+\frac{\delta'}{4}\\
&\overset{\eqref{Gleichung4.79}}{<}1-p_{\tilde{J}}+\frac{3}{4}\delta' + (J-\tilde{J}+1)\delta' + \frac{\delta'}{4}\\
&\hspace{0.2cm}=1-p_{\tilde{J}}+(J-\tilde{J}+2)\delta'.
\end{split} 
\end{equation}
  \noindent Induktiv erhalten wir damit für die Dichte $p_1$
  \vspace{-0.25cm}
\begin{align}
1-p_1&<1-p_2+(J-2+2)\delta'\nonumber\\
&<1-p_3 + (J-3+2)\delta'+(J-2+2)\delta'\nonumber \\
&\hspace{0.2cm}\vdots\nonumber  \allowdisplaybreaks\\ 
&<1-p_{J-1}+ \Big(\sum_{j=3}^J j\Big)\delta'\nonumber \\
&<1-p_{J-1}+ \bigg(\frac{J\cdot (J+1)}{2}-3\bigg)\delta'\nonumber \\
&\hspace{-0.16cm}\overset{\eqref{Gleichung4.73}}{<}\frac{J\cdot (J+1)}{2}\delta'.\nonumber
\end{align}
zeigen. Folglich gilt
\begin{equation}
\label{Gleichung2.55b}
p_1>1-\frac{J\cdot (J+1)}{2}\delta'.
\end{equation}
Damit sind die Behauptungen \ref{Behauptung4.44} und \ref{Behauptung43} bewiesen.
\end{proof}\vspace{-0.8cm}
\end{proof}

\subsubsection{Die Eigenschaften $(\alpha)$ und $(\beta)$}
\label{Abschnitt2.3.5.7}
Mithilfe dieser Überlegungen beweisen wir die folgenden beiden Eigenschaften
\begin{itemize}
	\item[$(\alpha)$] Für $j_1, j_2 \in \{1, \dots, J\}, j_1\neq j_2$ gilt $\Big|\frac{1}{N}\sum\limits_{n=1}^N c_n^{j_1}\overline{c_n^{j_2}}\Big|<\delta.$
	\item[$(\beta)$] Für alle $j\in \{1, \dots, J\}$ sowie $\mathcal{RI}$ aus Schritt I ist $\mathcal{RI}\Big(\frac{1}{N}\sum\limits_{n=1}^N c_n^jg(S^ny)\Big)>a-\delta$ erfüllt.
	\end{itemize}

\begin{proof}[\textbf{Beweis für $(\alpha)$:}]
	Ohne Beschränkung der Allgemeinheit gelte $j_1<j_2$. Der Beweis nutzt aus, dass wir die Basisintervalle der $j_1$'ten Schicht orthogonal zu allen vorherigen Schichten konstruiert haben.\\ Seien die Basisintervalle der $j_1$'ten Schicht  gegeben durch $(l_1, m_1], (l_2, m_2], \dots, (l_\kappa, m_\kappa].$

\noindent Da für Indizes $n$ außerhalb dieser Basisintervalle $c_n^{j_1}=0$ gilt, ist die folgende Gleichung erfüllt: 
\begin{equation}
\label{Gleichung4.84}
\bigg|\frac{1}{N}\sum\limits_{n=1}^N c_n^{j_1}\overline{c_n^{j_2}}\bigg|
=\bigg|\frac{1}{N}\sum\limits_{k=1}^\kappa \sum\limits_{n=l_k+1}^{m_k} \mspace{-5mu} c_n^{j_1}\overline{c_n^{j_2}}\bigg|\\
\overset{\text{Def. der}}{\underset{c_n^{j_1}}{=}} \bigg|\frac{1}{N}\sum\limits_{k=1}^\kappa \sum\limits_{n=1}^{m_k-l_k}f(T^nx)  \overline{c_{n+l_k}^{j_2}}\bigg|\\
\end{equation} 
Da die Basisintervalle der $j_1$'ten Schicht nach Konstruktion orthogonal zu allen vorherigen Schichten, also insbesondere auch orthogonal zu den Basisintervallen der $j_2$'ten Schicht sind, gilt
$$\big(c_{l_k +1}^{j_2},c_{l_k +2}^{j_2}, \dots, c_{m_k}^{j_2} \big)\in B_{m_k-l_k}^*.$$ Nach Konstruktion der Menge $B_{m_k-l_k}^*$ in Schritt II existiert folglich ein $\xi_k \in C$, sodass 
\begin{equation} 
\label{Gleichung4.85}
\big(c_{l_k +1}^{j_2},c_{l_k +2}^{j_2}, \dots, c_{m_k}^{j_2} \big)=\big(f(T\xi_k), f(T^2\xi_k), \dots, f(T^{m_k-l_k} \xi_k\big)
\end{equation}\noindent erfüllt ist. \\
\noindent In \eqref{Gleichung4.84} gilt folglich
\begin{equation}
\label{Gleichung4.86}
\bigg|\frac{1}{N}\sum\limits_{k=1}^\kappa \sum\limits_{n=1}^{m_k-l_k}f(T^nx)  \overline{c_{n+l_k}^{j_2}}\bigg|
	\overset{\eqref{Gleichung4.85}}{=}\bigg|\frac{1}{N}\sum_{k=1}^\kappa\sum_{n=1}^{m_k-l_k}f(T^nx)\overline{f(T^n\xi_k)}\bigg|.
\end{equation} 
Nach Definition der Menge $C$ in Schritt II und wegen $\xi_k\in C, \ k=1, \dots, \kappa$ sowie wegen $(L_{j_1}, M_{j_1})\ni m_k-l_k>L_{j_1}>L_1\overset{\eqref{Gleichung2.33}}{>}{N_\delta}$ ist für alle $k=1, \dots, \kappa$
\begin{equation} \label{Gleichung4.44d}\bigg|\sum_{n=1}^{m_k-l_k}f(T^nx)\overline{f(T^n\xi_k)}\bigg|\overset{\eqref{Gleichung2.29b}}{<}(m_k-l_k)\cdot \delta\end{equation} erfüllt.  Zusammen mit \eqref{Gleichung4.84} und \eqref{Gleichung4.86} erhalten wir
\begin{equation*}
\bigg|\frac{1}{N} \sum_{n=1}^N c_n^{j_1}\overline{c_n^{j_2}}\bigg|=\bigg|\frac{1}{N}\sum_{k=1}^\kappa \sum_{n=1}^{m_k-l_k}f(T^nx)\overline{f(T^n\xi_k)}\bigg|\overset{\eqref{Gleichung4.44d}}{<}\frac{\sum\limits_{k=1}^\kappa (m_k-l_k)}{N}\cdot\delta\leq\delta.\qedhere
\end{equation*}\end{proof}

\begin{proof}[\textbf{Beweis für $(\beta)$:}]
	\noindent	Seien $j\in \{1, \dots, J\}$ beliebig sowie $\mathcal{RI}\in \{\operatorname{Re}, \operatorname{Im}, -\operatorname{Re}, -\operatorname{Im}\}$ wie in Schritt I gewählt. Weiterhin seien die Basisintervalle der j'ten Schicht gegeben durch \begin{equation} \label{Gleichung4.1001}(l_1, l_1+n_j(S^{l_1}y)], (l_2,l_2+n_j(S^{l_2}y)], \dots,(l_\kappa, l_\kappa + n_j(S^{l_\kappa}y)].\end{equation} \noindent Da außerhalb der Basisintervalle der $j$'ten Schicht, $c_n^j=0$ erfüllt ist, gilt

	\begin{equation}
	\label{Gleichung4.88}
	\begin{split}
	\mathcal{RI}\Big(\frac{1}{N}\sum_{n=1}^N c_n^jg(S^ny)\Big)
	&=\mathcal{RI}\Big(\frac{1}{N}\sum_{k=1}^\kappa \sum_{n=l_k+1}^{l_k+n_j(S^{l_k}y)}c_n^jg(S^ny)\Big)\\
	&=\mathcal{RI}\Big(\frac{1}{N}\sum_{k=1}^\kappa \sum_{n=1}^{n_j(S^{l_k}y)}c_{n+l_k}^j\,g(S^{n+l_k}y)\Big)\\
	&\mspace{-14mu}\overset{\text{Def. der}}{\underset{c_n^j}{=}}\mathcal{RI}\Big(\frac{1}{N}\sum_{k=1}^\kappa \sum_{n=1}^{n_j(S^{l_k}y)}f(T^nx)\,g\big(S^n(S^{l_k}y)\big)\Big).
	\end{split}
	\end{equation}
	\noindent Nach Definition der Basisintervalle der $j$'ten Schicht ist $S^{l_k}y\in B$, $k=1, \dots, \kappa$ erfüllt. 
	\noindent Nach Wahl von $n_j(S^{l_k}y)$ in Schritt I/Behauptung \ref{Behauptung2.6} 
	\noindent gilt für alle $k\in \{1, \dots, \kappa \}$
	\begin{equation} 
	\label{Gleichung4.87}\mathcal{RI}\Big( \sum_{n=1}^{n_j(S^{l_k}y)} f(T^nx)g\big(S^n(S^{l_k}y)\big)\Big) >an_j(S^{l_k}y).
	\end{equation} 
	Folglich ist in \eqref{Gleichung4.88}
	\begin{equation}
	\begin{split}
	\mathcal{RI}\Big(\frac{1}{N}\sum_{n=1}^N c_n^jg(S^ny)\Big)
	&\hspace{0.2cm}=\mathcal{RI}\Big(\frac{1}{N}\sum_{k=1}^\kappa \sum_{n=1}^{n_j(S^{l_k}y)}f(T^nx)\,g\big(S^n(S^{l_k}y)\big)\Big)\\
	&\overset{\eqref{Gleichung4.87}}{>}a \frac{\sum\limits_{k=1}^\kappa n_j(S^{l_k}y)}{N}\\
	&\overset{\eqref{Gleichung4.1001}}{=}ap_j\\
	&\hspace{0.2cm}>ap_1\\
	&\overset{\eqref{Gleichung2.55b}}{>}a\Big(1-\frac{J\cdot (J+1)}{2}\delta'\Big)\\
	&\overset{\eqref{Gleichung2.34c}}{>}a-\delta
	\end{split}
	\end{equation}
	
\noindent 	erfüllt.
\end{proof}
\subsubsection{Der Widerspruch}
Sei die Folge $\big(c_n\big)_{n=1}^N$ definiert durch \begin{equation} \label{Gleichung4.444} c_n:=\sum\limits_{j=1}^J c_n^j.\end{equation} Dann impliziert $(\alpha)$
\begin{equation}
\label{Gleichung2.58}
\begin{split}
\frac{1}{N}\sum_{n=1}^N\big|c_n\big|^2
&=\frac{1}{N}\sum_{n=1}^{N}\Big|\sum_{j=1}^{J}c_n^j\Big|^2\\ 
&=\frac{1}{N}\sum_{n=1}^{N}\bigg(\underbrace{\sum_{j_1=1}^{J}\big|c_n^{j_1} \big|^2}_{\mathclap{\overset{(\ref{Gleichung1000})}{\leq}J||f||_{L^\infty(X, \mu)}^2}} + \sum_{j_1\neq j_2}c_n^{j_1}\overline{c_n^{j_2}}\bigg)\\
&\leq \frac{1}{N}\cdot NJ||f||_{L^\infty(X, \mu)}^2+\sum_{j_1\neq j_2}\Big|\underbrace{\frac{1}{N}\sum_{n=1}^{N}c_n^{j_1}\overline{c_n^{j_2}}}_{\overset{(\alpha)}{<}\delta}\Big|\\
&<J||f||_{L^\infty(X, \mu)}^2 + J^2\delta\\
&\leq \bigg(\sqrt{J}||f||_{L^\infty(X, \mu)}+J\sqrt{\delta}\bigg)^2.
\end{split}
\end{equation}

\noindent Aufgrund der Linearität von $\mathcal{RI}$ folgt aus $(\beta)$\vspace{-0.15cm}
\begin{equation}
\label{Gleichung2.61}
\begin{split}
\bigg|\frac{1}{N}\sum_{n=1}^N c_ng(S^ny)\bigg|
&\geq \bigg|\mathcal{RI}\bigg(\frac{1}{N}\sum_{n=1}^N c_ng(S^ny)\bigg)\bigg|\\
&\hspace{-0.23cm}\overset{\eqref{Gleichung4.444}}{=}\bigg|\mathcal{RI}\bigg(\frac{1}{N}\sum_{n=1}^N\sum_{j=1}^J c_n^jg(S^ny)\bigg)\bigg|\\
&=\Big|\sum_{j=1}^J \underbrace{\mathcal{RI}\Big(\frac{1}{N}\sum_{n=1}^N c_n^jg(S^ny)\Big)}_{\overset{(\beta)}{>}a-\delta}\Big|\\
&>J(a-\delta).
\end{split}
\end{equation}
\noindent Mithilfe der Cauchy-Schwarzschen Ungleichung erhalten wir\vspace{-0.15cm}
\begin{equation}
\label{Gleichung2.59}
\begin{split}
J(a-\delta)&\overset{(\ref{Gleichung2.61})}{<}\bigg|\frac{1}{N}\sum_{n=1}^N c_ng(S^ny)\bigg|\\
&\hspace{0.2cm}\leq\Big(\frac{1}{N}\sum_{n=1}^N \big|c_n\big|^2\Big)^{\frac{1}{2}}\cdot\Big(\frac{1}{N}\sum_{n=1}^{N}\Big|g(S^ny)\Big|^2\Big)^{\frac{1}{2}}\\
&\overset{\eqref{Gleichung2.58}}{<}\Big(\sqrt{J}||f||_{L^\infty(X, \mu)}+J\sqrt{\delta}\Big)||g||_{L^\infty(Y, \nu)}.
\end{split}
\end{equation}
Nach Wahl von $J$ in $(\ref{Gleichung2.34d})$ gilt\vspace{-0.15cm}
$$\frac{||f||_{L^\infty(X,\mu)}||g||_{L^\infty(Y,\nu)}}{\sqrt{J}}<\frac{a}{2}$$ und folglich\vspace{-0.15cm}
\begin{equation}
\label{Gleichung2.58b}
\frac{a}{2}<a-\frac{||f||_{L^\infty(X, \mu)}||g||_{L^\infty(Y, \nu)}}{\sqrt{J}}.
\end{equation}
\noindent Umstellen von \eqref{Gleichung2.59} liefert weiterhin\vspace{-0.07cm}
\begin{equation} 
\label{Gleichung4.95}
a<\frac{\big(||f||_{L^\infty(X,\mu)}+\sqrt{\delta}\sqrt{J}\big)||g||_{L^\infty(Y,\nu)}}{\sqrt{J}}+\delta.
\end{equation} 
\noindent Zusammen mit \eqref{Gleichung2.58b} erhalten wir 
 \begin{equation}
 \label{Gleichung4.96} 
 \begin{split} \frac{a}{2}&< a-\frac{||f||_{L^\infty(X, \mu)}||g||_{L^\infty(Y, \nu)}}{\sqrt{J}}\\
 &\hspace{-0.23cm}\overset{\eqref{Gleichung4.95}}{<}\frac{\big(||f||_{L^\infty(X,\mu)}+\sqrt{\delta}\sqrt{J}\big)||g||_{L^\infty(Y,\nu)}}{\sqrt{J}}+\delta-\frac{||f||_{L^\infty(X, \mu)}||g||_{L^\infty(Y, \nu)}}{\sqrt{J}}\\
 &=\delta+\sqrt{\delta}||g||_{L^\infty(Y, \nu)}.
 \end{split} 
 \end{equation} 

\noindent Da \eqref{Gleichung4.96} nach Wahl von $\delta$ in $(\ref{Gleichung2.31b})$ für alle $0<\delta<\min(a,c)$ gilt, folgt $$a\leq 0.$$ Dies steht im Widerspruch zur Definition von $$a=\frac{1}{k_0}>0$$ in \eqref{Gleichung2.22b}/Schritt I. Die zu Beginn des Beweises getroffene Annahme
$$\nu\bigg(\Big\{y\in Y\bigg|\limsup\limits_{N\to\infty}\Big|\frac{1}{N}\sum_{n=1}^N f(T^nx)g(S^ny) \Big|>0 \Big.\Big\}\bigg)>0$$ ist somit falsch und damit ist der Satz $\ref{Satz2.5}$ bewiesen.
\end{proof}
\section{Beweis für $f\in \mathcal{K}^\bot\cap L^\infty(X,\mu) $ mit beliebigem Bild}
\label{unendlich}
In diesem Abschnitt zeigen wir die folgende Verallgemeinerung von Satz \ref{Satz2.5}:
\begin{Satz}
	\label{Satz2.6}
	Seien $(X, \mathfrak{A}, \mu, T)$  und $(Y, \mathfrak{B}, \nu, S)$ invertierbare, ergodische maßtheoretische dynamische Systeme.  Weiterhin seien $f\in \mathcal{K}^\bot\cap L^\infty(X, \mu)$ und $g \in L^\infty(Y, \nu)$ beliebig gewählt. Dann existiert eine Menge $X'\in \mathfrak{A}$ mit $\mu(X')=1$, welche unabhängig vom System $(Y,\mathfrak{B},\nu, S)$ und $g\in L^\infty(Y,\nu)$ gewählt werden kann, sodass für alle $x\in X'$
	$$\lim_{N\to\infty}\frac{1}{N}\sum_{n=1}^N f(T^nx)g(S^ny)= 0 \text{ für }\nu\text{-fast alle } y\in Y$$ erfüllt ist.
\end{Satz}

\noindent In der Beweisführung orientieren wir uns an \cite[S.92-99]{Assani2003}. Die Grundidee ist, $f\in \mathcal{K}^\bot \cap L^\infty(X, \mu)$ durch eine Folge $\big(f_k\big)_{k=1}^\infty\subset L^\infty(X, \mu)$
von einfachen Funktionen in $L^\infty(X,\mu)$ zu approximieren. Anschließend beweisen wir für bestimmte $k\in \mathbb{N}$ die Behauptungen \ref{Behauptung2.6} und \ref{Behauptung3.21} mit $f_k$ anstelle von $f$. In Schritt III fixieren wir schließlich ein $k\in \mathbb{N}$ und leiten analog zu Schritt III im vorausgegangenen Kapitel \ref{endlich} einen Widerspruch her.

\begin{proof}[\textbf{Beweis:}]
Wir beginnen mit der Konstruktion der Menge $X'$ aus Satz \ref{Satz2.6}. Dazu definieren wir wie in Abschnitt \ref{Vorbetrachtungen2} zunächst die Menge $X_1\in \mathfrak{A}$ als
$$X_1:= \bigg\{x\in X\Big| \lim_{N\to \infty}\frac{1}{N} \sum_{n=1}^N f(T^nx)\overline{f(T^n\xi)}=0 \ \text{für}\ \mu \text{-fast alle } \xi\in X \bigg\}.$$ Nach Lemma \ref{Behauptung3.1} gilt 
$$\mu(X_1)=1.$$
Zur Konstruktion der Menge $X_2$ benötigen wir das Lemma \ref{X2}. Dieses ist jedoch nur auf einfache Funktionen anwendbar. Deshalb fixieren wir eine Folge $\big(f_k\big)_{k=1}^\infty \subset L^\infty(X,\mu)$ von einfachen Funktionen, für welche \begin{equation} \label{Gleichung4.81c} \lim_{k\to\infty}||f-f_k||_{L^\infty(X,\mu)}=0 \end{equation} erfüllt ist. Die Existenz einer solchen Folge ist durch Behauptung \ref{Behauptung2.10} gesichert.\\
\noindent Wir definieren für $k\in \mathbb{N}$
\begin{equation*}
\begin{split} 
\mathfrak{A}\ni X_2^k:=&\Bigg\{x\in X\bigg|\lim\limits_{N\to\infty}\frac{1}{N}\sum_{j=1}^N\big(\mathbbm{1}_A \circ\Phi_n^k\big)(T^jx)=\mu\big(\big\{\xi\in X\big|\ \big(f_k(T\xi), \dots, f_k(T^{n}\xi)\big)\in A \big\}\big)\\
&\text{ für alle }A\subseteq f_k(X)^n \text{ und alle }n\in \mathbb{N}\Bigg\},
\end{split}\end{equation*}
wobei $\Phi_n^k$ die Abbildung $\Phi_n^k: x\mapsto \big(f_k(x), f_k(Tx), \dots, f_k(T^{n-1}x)\big)$ bezeichne. Durch Anwendung des Lemmas \ref{X2} auf die Funktion $f_k\in L^\infty(X,\mu)$, $k\in \mathbb{N}$ erhalten wir $$\mu(X_2^k)=1\ \text{ für alle } k\in \mathbb{N}.$$ 
Für die Menge \begin{equation} 
\label{Gleichung4.81b}
\mathfrak{A}\ni X_2:=\bigcap_{k\in \mathbb{N}} X_2^k.
\end{equation}
gilt folglich $$\mu(X_2)=1.$$
Wir definieren die Menge $X'\in \mathfrak{A}$ mit $\mu(X')=1$ durch 
\begin{equation} 
\label{Gleichung4.83}
X'=X_1\cap X_2.
\end{equation}

\noindent Für alle $k\in \mathbb{N}$  ist
$|f-f_k|(x)\leq ||f-f_k||_{L^\infty(X,\mu)}$ sowie $|f_k|(x)\leq ||f_k||_{L^\infty(X,\mu)}$ für $\mu$"=fast alle $x\in X$ erfüllt. Da $T$ maßerhaltend ist und der abzählbare Schnitt von Mengen mit Maß $1$ wieder volles Maß hat, können wir ohne Beschränkung der Allgemeinheit für alle $x\in X'$ zudem
\begin{equation} 
\label{Gleichung4.83b}
|f-f_k|(T^nx)\leq ||f-f_k||_{L^\infty(X,\mu)} \text{ und } |f_k|(T^nx) \leq ||f_k||_{L^\infty(X,\mu)}\text{ für alle }k,n\in \mathbb{N}
\end{equation}
annehmen.\\
\noindent Nach Konstruktion der Menge $X'$ ist diese unabhängig vom System $(Y,\mathfrak{B},\nu,S)$ und der Funktion $g\in L^\infty(Y,\nu)$.\\
\noindent  Um den Satz \ref{Satz2.6} zu beweisen, genügt es zu zeigen, dass für alle $x\in X'$\vspace{-0.25cm}
 $$\lim_{N\to\infty} \frac{1}{N}\sum_{n=1}^N f(T^nx)g(S^ny) = 0 \text{ für }\nu"=\text{fast alle }y\in Y$$\vspace{-0.4cm}\\ erfüllt ist. Dies beweisen wir mithilfe eines Widerspruchsbeweises:\\
 \noindent Angenommen es existiert ein $x\in X'$, sodass für die Menge \vspace{-0.2cm}
	\begin{equation*}
	B^*:=\Big\{y\in Y\bigg|\limsup\limits_{N\to\infty}\Big|\frac{1}{N}\sum_{n=1}^N f(T^nx)g(S^ny) \Big|>0 \Big.\Big\}
	\end{equation*}
	gilt: 
	\begin{equation} 
	\label{Gleichung2.63b}
	\nu(B^*)>0.
	\end{equation}
	\noindent Wir fixieren im gesamten Beweis dieses $x\in X'$.

	\subsection{Schritt I: Konstruktion von "`schlechten"' Intervallen}
\begin{Behauptung}
	\label{Behauptung15} 
	Es existieren Konstanten $0<a,c\in \mathbb{R}$ und $N_0\in \mathbb{N}$, sodass für ein beliebiges $J\in \mathbb{N}$ sowie beliebige $\mathfrak{k}_1, \mathfrak{k}_2, \dots, \mathfrak{k}_J\in \mathbb{N}$ eine Menge $B\in \mathfrak{B}$ mit $B\subseteq B^*$ und $\nu(B)>c$, eine messbare Abbildung $\mathcal{RI}\in\{ \operatorname{Re}, \operatorname{Im}, -\operatorname{Re}, -\operatorname{Im}\}$ und eine Folge von Intervallen $\big((L_j, M_j)\big)_{j=1}^J$ aus $\mathbb{N}$ mit $L_j>\mathfrak{k}_j,\ j=1, \dots, J$ existieren, sodass gilt: Für jedes $y\in B$ und $j\in \{1, \dots, J\}$ existiert ein $n_j=n_j(y)\in (L_j, M_j)$ derart, dass für alle $k>N_0$
	\begin{equation*}
	\mathcal{RI}\Big( \sum_{n=1}^{n_j} f_k(T^nx)g(S^ny)\Big) >an_j
	\end{equation*} erfüllt ist. Dabei kann für $j\in \{1, \dots, J-1\}$ das Intervall $(L_j, M_j)$ unabhängig von den Zahlen $\mathfrak{k}_{j+1}, \dots, \mathfrak{k}_J$ konstruiert werden.
\end{Behauptung}
\begin{Bemerkung}
Die Behauptung besagt also gerade, dass die Behauptung \ref{Behauptung2.6} für $k>N_0$ auf die Funktion $f_k$ anstelle von $f$ anwendbar ist.
\end{Bemerkung}

\begin{proof}[\textbf{Beweis:}]
\noindent Seien $J\in \mathbb{N}$ sowie $\mathfrak{k}_1, \mathfrak{k}_2, \dots, \mathfrak{k}_J\in \mathbb{N}$ beliebig gewählt. Wegen Bemerkung \ref{Bemerkung4.1} können wir die Behauptung \ref{Behauptung2.6} auf die nicht notwendig einfache Funktion $f\in \mathcal{K}^\bot \cap L^\infty(X,\mu)$ anwenden. Folglich existieren $0< \tilde{a},c\in \mathbb{R}$, welche unabhängig von $J$ und $\mathfrak{k}_1, \mathfrak{k}_2, \dots, \mathfrak{k}_J$ gewählt werden können, eine Menge $B\in \mathfrak{B}$ mit $B\subseteq B^*$ und $\nu(B)>c$, eine Abbildung $\mathcal{RI}\in\{ \operatorname{Re}, \operatorname{Im}, -\operatorname{Re}, -\operatorname{Im}\}$ und eine Folge von Intervallen  $\big((L_j, M_j)\big)_{j=1}^J$ aus $\mathbb{N}$ mit $L_j>\mathfrak{k}_j,\ j=1, \dots, J$, sodass gilt: Für jedes $y\in B$ und $j\in \{1, \dots, J\}$ existiert ein $n_j=n_j(y)\in (L_j, M_j)$ derart, dass 
\begin{equation}
\label{Gleichung2.64}
\mathcal{RI}\Big( \sum_{n=1}^{n_j} f(T^nx)g(S^ny)\Big) >\tilde{a}n_j
\end{equation}
\noindent erfüllt ist. Da $S$ maßerhaltend ist und der Schnitt abzählbar vieler Mengen mit Maß $1$ wieder Maß 1 hat, können wir ohne Beschränkung der Allgemeinheit annehmen, dass für alle $y\in B$ \begin{equation} \label{Gleichung4.86b} |g(S^ny)|\leq ||g||_{L^\infty(Y,\nu)} \text{ für alle } n\in \mathbb{N} \end{equation} erfüllt ist. Weiterhin ist für ein beliebiges $k\in \mathbb{N}$
\begin{equation}
\label{Gleichung2.65b}
\begin{split}
\mathcal{RI}\Big(\sum_{n=1}^{n_j}\big(f_k-f\big)(T^nx)g(S^ny)\Big)
&\geq - \bigg|\mathcal{RI}\Big(\sum_{n=1}^{n_j}\big(f_k - f\big)(T^nx)g(S^ny)\Big)\bigg|\\
&\geq -\sum_{n=1}^{n_j}\big|f_k - f\big|(T^nx)|g|(S^ny)\\
&\hspace{-0.2cm}\overset{\eqref{Gleichung4.83b}}{\underset{\eqref{Gleichung4.86b}}{\geq}} -n_j||f_k-f||_{L^\infty(X,\mu)}||g||_{L^\infty(Y, \nu)}
\end{split}
\end{equation}
erfüllt.  Nach Wahl der Folge $\big(f_k\big)_{k=1}^\infty$ in \eqref{Gleichung4.81c} existiert ein $N_0\in \mathbb{N}$, sodass für alle $k>N_0$
 \begin{equation}
 \label{Gleichung2.66}
 ||f_k-f||_{L^\infty(X, \mu)}\cdot ||g||_{L^\infty(Y,\nu)}<\frac{\tilde{a}}{2}
 \end{equation}
erfüllt ist. Insbesondere kann $N_0$ unabhängig von $J$ und $\mathfrak{k}_1, \dots, \mathfrak{k}_J$ gewählt werden.
\noindent Mithilfe der Linearität von $\mathcal{RI}$ erhalten wir für alle $k>N_0$
\begin{align*}
\mathcal{RI}\Big(\sum_{n=1}^{n_j} f_k(T^nx)g(S^ny)\Big)
&=\mathcal{RI}\Big(\sum_{n=1}^{n_j}\big(f_k-f\big)(T^nx)g(S^ny)\Big)+\mathcal{RI}\Big(\sum_{n=1}^{n_j}f(T^nx)g(S^ny)\Big)\\
&\hspace{-0.2cm}\overset{(\ref{Gleichung2.64})}{\underset{(\ref{Gleichung2.65b})}{>}} \big(\tilde{a}-||f_k-f||_{L^\infty(X,\mu)}\cdot||g||_{L^\infty(Y,\nu)} \big)\cdot n_j\\
&\hspace{-0.2cm}\overset{\eqref{Gleichung2.66}}{>}\frac{\tilde{a}}{2}n_j.
\end{align*}
\noindent Setze \begin{equation} \label{Gleichung4.12c}\mathbb{R}\ni a:=\frac{\tilde{a}}{2}>0.\end{equation}
Damit ist die Behauptung bewiesen.
\end{proof}
\subsection{Schritt II: "`Gute"' Intervalle}
\label{SchrittIVb}
 Zu Beginn des Beweises von Satz \ref{Satz2.6} haben wir ein $x\in X'$ fixiert. Nach Definition der Menge $X'$ in \eqref{Gleichung4.83} ist also insbesondere $x\in X_1$ erfüllt. Folglich gilt für $\mu$"=fast alle $\xi\in X$ 
\begin{equation}
\label{Gleichung2.74}
\lim\limits_{N\to\infty} \frac{1}{N}\sum_{n=1}^N f(T^nx)\overline{ f(T^n\xi)}=0.
\end{equation}
Für beliebiges $\delta >0$ und $\delta''>0$ existiert deshalb nach Satz \ref{Jegorow} eine Menge $C \in \mathfrak{A}$ mit $\mu(C)>1-\delta''$, sodass  der Grenzwert in $(\ref{Gleichung2.74})$ auf $C$ gleichmäßig in $\xi$ ist. Folglich existiert ein $N_{\delta}\in \mathbb{N}$ derart, dass für alle $N>N_\delta$ und alle $\xi\in C$ \vspace{-0.15cm} 
\begin{equation}
\label{Gleichung2.68}
\bigg|\frac{1}{N}\sum_{n=1}^N f(T^nx)\overline{ f(T^n\xi)}\bigg|<\frac{\delta}{4}
\end{equation}
erfüllt ist. Da nach Wahl der Folge $\big(f_k\big)_{k\in\mathbb{N}}$\ \ $\lim\limits_{k\to\infty}||f-f_k||_{L^\infty(X, \mu)}=0$ gilt,  existiert weiterhin ein $\tilde{N}_\delta \in \mathbb{N}$ derart, dass für alle $k>\tilde{N}_\delta$ \vspace{-0.15cm} 
\begin{equation}\label{Gleichung2.71}
||f-f_k||_{L^\infty(X,\mu)}||f||_{L^\infty(X, \mu)}<\frac{\delta}{4}
\end{equation}
\noindent sowie \vspace{-0.15cm} 
\begin{equation}
\label{Gleichung2.72}
||f-f_k||^2_{L^\infty(X, \mu)}<\frac{\delta}{4}
\end{equation}
erfüllt ist. Ohne Beschränkung der Allgemeinheit können wir für die Menge $C\in \mathfrak{A}$ mit $\mu(C)>1-\delta''$ annehmen, dass für alle $\xi\in C$ und alle  $k,n\in \mathbb{N}$ gilt: $|f_k(T^n\xi)|\leq ||f_k||_{L^\infty(X,\mu)}$ sowie $|\big(f-f_k\big)(T^n\xi)|\leq ||f-f_k||_{L^\infty(X,\mu)}$. Damit erhalten wir für alle $N>N_\delta, k>\tilde{N}_\delta$ und $\xi \in C$ die folgende Ungleichung:\vspace{-0.15cm} 
\begin{align}
\label{Gleichung2.73}
&\mspace{22mu}\bigg|\frac{1}{N}\sum_{n=1}^N f_k(T^nx)\overline{f_k(T^n\xi)}\bigg|\nonumber\\
	&\leq \frac{1}{N}\sum_{n=1}^N\ |f_k-f|(T^nx)\vphantom{\bigcap}|f_k |(T^n \xi)+\frac{1}{N}\sum_{n=1}^N|f|(T^nx)|f_k-f|(T^n\xi)+\bigg|\frac{1}{N}\sum_{n=1}^N f(T^nx)\overline{f(T^n\xi)}\bigg|\nonumber\\
	&\hspace{-0.2cm}\overset{\eqref{Gleichung4.83b}}{\underset{\eqref{Gleichung2.68}}{<}}||f_k-f||_{L^\infty(X, \mu)}\big(||f_k-f||_{L^\infty(X, \mu)} + ||f||_{L^\infty(X, \mu)}\big)+||f||_{L^\infty(X, \mu)}||f_k-f||_{L^\infty(X, \mu)}+\frac{\delta}{4}\nonumber\\
	&\hspace{-0.2cm}\overset{\eqref{Gleichung2.71}}{\underset{\eqref{Gleichung2.72}}{<}}\delta.
\end{align}\vspace{-1cm}\\

\noindent Analog zu Abschnitt $\ref{SchrittIV}$ sei die Menge $B_{n,k}^*\subseteq f_k(X)^n$ für $n\in \mathbb{N}$ und $k>\tilde{N}_\delta$ definiert als\vspace{-0.15cm} 
\begin{equation} 
\label{Gleichung4.113}
B_{n,k}^*:=\Big\{\big(f_k(T\xi), f_k(T^2\xi), \dots, f_k(T^n\xi)\big)\Big| \xi\in C
\Big\}.\end{equation}
\noindent Mithilfe dieser Bezeichnungen, welche wir auch in Schritt III nutzen werden, können wir die folgende Behauptung formulieren:
\begin{Behauptung} 
	\label{Behauptung14}
Sei $(L, M)\subset \mathbb{N}$ ein beliebiges nichtleeres Intervall sowie $k>\tilde{N}_{\delta}$. Dann existiert ein $N_k(M)\in \mathbb{N}$ derart, dass für alle $N>N_k(M)$\vspace{-0.15cm} 
\begin{equation*}
\frac{1}{N}\cdot \# \Big\{j\in \{1, \dots, N\}\big| \ \big(f_k(T^jx), f_k(T^{j+1}x), \dots, f_k(T^{j+n-1}x)\big)\in B_{n,k}^*\, \forall n\in (L,M)\Big\}>1-\delta''
\end{equation*}
erfüllt ist.
\end{Behauptung}
\begin{Bemerkung}
Die Behauptung besagt, dass die Behauptung \ref{Behauptung3.21} für $k>\tilde{N}_\delta$ gültig ist für $f_k$ anstelle von $f$ sowie $B_{n,k}^*$ anstelle von $B_n^*$.
\end{Bemerkung}
\begin{proof}[\textbf{Beweis:}]
Zu Beginn des Beweises von Satz \ref{Satz2.6} haben wir ein $x\in X'$ fixiert. Nach Definition der Menge $X'$ in \eqref{Gleichung4.83} ist $x\in X_2\overset{\eqref{Gleichung4.81b}}{=}\bigcap\limits_{k\in \mathbb{N}} X_2^k$ und folglich insbesondere\vspace{-0.2cm}  $$x\in X_2^k$$\vspace{-0.7cm} \\ erfüllt. Unter Zuhilfenahme der Vorüberlegungen \eqref{Gleichung2.74} bis \eqref{Gleichung4.113} können wir den Beweis nun genauso wie im Beweis zu Behauptung \ref{Behauptung3.21} führen.
\end{proof}

\subsection{Schritt III: Der Widerspruch}
Aufbauend auf dem Abschnitt \ref{SchrittV} sowie den Behauptungen \ref{Behauptung15} und \ref{Behauptung14} beweisen wir die Behauptung \ref{Behauptung100}. Dazu benutzen wir die bereits eingeführten Bezeichnungen. Für $N_0$ aus Behauptung \ref{Behauptung15} und $\tilde{N}_\delta$ aus Schritt II \big(\eqref{Gleichung2.71}, \eqref{Gleichung2.72}\big) setzen wir\vspace{-0.15cm} 
\begin{equation}
\label{Gleichung4.115}
k:=\max\big(N_0, \tilde{N}_\delta\big)+1
\end{equation}
\noindent und leiten analog zu Schritt III in Kapitel \ref{endlich} einen Widerspruch für die Funktion $f_k$ anstelle der Funktion $f$ her. Dazu wählen wir für $a$ und $c$ aus Behauptung \ref{Behauptung15}\vspace{-0.15cm} 
\begin{equation}
\label{Gleichung2.77}
\mathbb{N}\ni J>\frac{4\big(||f||_{L^\infty(X, \mu)} +||f-f_k||_{L^\infty(X,\mu)}\big)^2||g||_{L^\infty(Y, \nu)}^2}{a^2}.
\end{equation}
und fixieren ein beliebiges \vspace{-0.15cm} 
\begin{equation}
\label{Gleichung2.76}
0<\delta<\min(a,c).
\end{equation}

\noindent  Alle weiteren Konstanten wählen wir wie in $(\ref{Gleichung2.34c})$ bis $(\ref{Gleichung2.39})$. Dabei sei für $j\in \{1, \dots, J\}$  $N(M_j):=N_k(M_j)$ gegeben durch Behauptung \ref{Behauptung14}. \\
\noindent Wir fixieren nun für den gesamten Beweisschritt ein $y\in B\cap G$ mit $|g(S^ny)|\leq ||g||_{L^\infty(Y,\nu)}, \ n=1, \dots, N$ und zeigen zunächst die folgende Behauptung:
\begin{Behauptung}
	\label{Behauptung100}
	Seien die Voraussetzungen \eqref{Gleichung4.115} bis \eqref{Gleichung2.76} sowie \eqref{Gleichung2.34c} bis \eqref{Gleichung2.39} erfüllt. Dann gilt die Behauptung \ref{Behauptung43} mit (3') anstelle von (3) und (4') anstelle von (4):
	\begin{itemize}
		\item[(3')] Für ein beliebiges Basisintervall $(l,m]$ der $j$'ten Schicht sowie $B_{m-l,\, k}^*$ aus Schritt II/\eqref{Gleichung4.113} gilt die Orthogonalitätsbedingung\vspace{-0.15cm}  $$\big(c_{l+1}^{\tilde{J}}, c_{l+2}^{\tilde{J}}, \dots, c_{m}^{\tilde{J}}\big)\in B_{m-l,\, k}^*\text{ für alle }\tilde{J}\in \{j+1, \dots, J\}.$$
		\item[(4')] Es gilt \vspace{-0.15cm} \begin{equation*}
		c_n^j:=
		\begin{cases}
		f_k(T^{n-l_r^j}x),  &n\in (l_r^j, m_r^j] \text{ für ein } r\in \{1, \dots, \kappa_j\},  \\
		0, &\text{sonst.} 
		\end{cases}
		\end{equation*}
	\end{itemize}
\end{Behauptung}

\begin{Bemerkung}
Die Folgenglieder $c_n^j$ werden in (4') nicht mithilfe der Funktion $f\in \mathcal{K}^\bot\cap L^\infty(X,\mu)$, sondern mithilfe der einfachen Funktion $f_k\in L^\infty(X,\mu)$, $k:=\max(N_0, \tilde{N}_\delta)+1$ definiert. Ein weiterer Unterschied zu Behauptung \ref{Behauptung43} besteht in der leicht veränderten Definition der Orthogonalitätsbedingung in (3') mit den Mengen $B_{n,k}^*$ anstelle der Mengen $B_n^*$. 
\end{Bemerkung}

\begin{proof}[\textbf{Beweis:}]
Die Basisintervalle sowie die Folgen $\big(c_n^j\big)_{n=1}^N, \ j=1, \dots, J$ konstruieren wir wieder in umgekehrter Reihenfolge in $j$. Die Basisintervalle der J'ten Schicht definieren wir genauso wie im Abschnitt $\ref{Abschnitt2.3.5.1}$, die Folge $(c_n^J)_{n=1}^N$ definieren wir wie in Behauptung \ref{Behauptung100}(4').\\
\noindent Die weiteren Schichten sowie die $c_n^j$ mit $j<J$ konstruieren wir induktiv: Seien für $1<\tilde{J}\leq J$ bereits die Basisintervalle der $\tilde{J}$'ten, ($\tilde{J}$+1)'ten, ..., (J-1)'ten und J'ten Schicht sowie die $\big(c_n^j\big)_{n=1}^N,\ j=\tilde{J}, \dots, J$ definiert. Die Basisintervalle der ($\tilde{J}$-1)'ten Schicht konstruieren wir analog zu Abschnitt $\ref{Abschnitt2.3.5.6}$, jedoch unter der leicht veränderten Orthogonalitätsbedingung (3') aus Behauptung \ref{Behauptung100}. Die Folge $\big(c_n^{\tilde{J}-1}\big)_{n=1}^N$ definieren wir wie in Behauptung \ref{Behauptung100}(4').\\
\noindent Die folgende Rechnung motiviert die Begriffsbildung "`Orthogonalität"':\\
\noindent Sei $\big(l, l+n_{\tilde{J}-1}(S^ly)\big]$ ein beliebiges Basisintervall der ($\tilde{J}$-1)'ten Schicht sowie $\tilde{J}-1<j\leq J.$ Wegen der Orthogonalitätsbedingung existiert ein $\xi_j\in C$ derart, dass \begin{equation}
\label{Gleichung2.79}
\big(c_{l+1}^j,c_{l+2}^j, \dots, c_{l+n_{\tilde{J}-1}(S^ly)}^j\big)=\big(f_k(T\xi_j), f_k(T^2\xi_j), \dots, f_k(T^{n_{\tilde{J}-1}(S^ly)}\xi_j)\big)
\end{equation}
\noindent erfüllt ist. Dann gilt wegen $n_{\tilde{J}-1}(S^ly)>L_1\overset{(\ref{Gleichung2.33})}{>}N_\delta$ und $k\overset{\eqref{Gleichung4.115}}{>}\tilde{N}_\delta$:
\begin{equation*}
\begin{split}
\bigg|\frac{1}{n_{\tilde{J}-1}(S^ly)}\sum_{n=l+1}^{l+n_{\tilde{J}-1}(S^ly)} c_n^{\tilde{J}-1}c_n^j\bigg|
&\overset{\text{Def. der }}{\underset{c_n^{\tilde{J}-1}}{=}} \bigg|\frac{1}{n_{\tilde{J}-1}(S^ly)}\sum_{n=1}^{n_{\tilde{J}-1}(S^ly)}f_k(T^nx)c_{n+l}^j\bigg|\\
&\hspace{0.15cm}\overset{(\ref{Gleichung2.79})}{=}\bigg|\frac{1}{n_{\tilde{J}-1}(S^ly)}\sum_{n=1}^{n_{\tilde{J}-1}(S^ly)}f_k(T^nx)f_k(T^n\xi_j)\bigg|\\
&\hspace{0.18cm}\overset{(\ref{Gleichung2.73})}{<}\delta.
\end{split}
\end{equation*}
\noindent Genauso wie im Abschnitt $\ref{Abschnitt2.3.5.6}$ zeigen wir für die Dichte $p_1$ der Basisintervalle der ersten Schicht in $[1, \dots, N]$
$$p_1>1-\frac{J(J+1)}{2}\delta'.$$
Die Behauptung ist damit bewiesen. \end{proof}
\noindent Aufbauend auf Behauptung \ref{Behauptung100} können wir genauso wie im Abschnitt \ref{Abschnitt2.3.5.7} die Eigenschaften $(\alpha)$ und $(\beta)$ zeigen: 
\begin{itemize}
	\item[$(\alpha)$] Für $j_1, j_2 \in \{1, \dots, J\}, j_1\neq j_2$ ist $\Big|\frac{1}{N}\sum_{n=1}^N c_n^{j_1}\overline{c_n^{j_2}}\Big|<\delta$ erfüllt. 
	\item[$(\beta)$]  \begin{samepage}Für $j\in \{1, \dots, J\}$ und $\mathcal{RI}\in \{\operatorname{Re}, \operatorname{Im}, -\operatorname{Re}, -\operatorname{Im}\}$ aus Schritt I gilt \vspace{-0.15cm} $$\mathcal{RI}\big(\frac{1}{N}\sum_{n=1}^N c_n^jg(S^ny)\big)>a-\delta.$$ \end{samepage}
\end{itemize}
\noindent Sei die Folge $\big(c_n\big)_{n=1}^N$ gegeben durch \vspace{-0.3cm} $$c_n:=\sum\limits_{j=1}^J c_n^j.$$
 Für alle $j\in \{1, \dots, J\}$ und $n\in \{1, \dots, N\}$ gilt $c_n^j=0$ oder $c_n^j=f_k(T^mx)$ für ein $m\in \mathbb{N}.$ Folglich ist für alle $j\in \{1, \dots, J\}$ und $n\in \{1, \dots, N\}$\vspace{-0.15cm} 
 \begin{equation}
 \label{Gleichungalpha}
  |c_n^j|\overset{\eqref{Gleichung4.83b}}{\leq}||f_k||_{L^\infty(X, \mu)}\leq ||f_k-f||_{L^\infty(X, \mu)}+ ||f||_{L^\infty(X, \mu)}
  \end{equation} erfüllt. Zusammen mit $(\alpha)$ folgt\vspace{-0.15cm}  \begin{equation}
\label{Gleichung2.81}
\begin{split}
\bigg|\frac{1}{N}\sum_{n=1}^N\big|c_n\big|^2\bigg|
&=\bigg|\frac{1}{N}\sum_{n=1}^{N}\bigg(\sum_{j_1=1}^{J}\big|c_n^{j_1} \big|^2 + \sum_{j_1\neq j_2}c_n^{j_1}\overline{c_n^{j_2}}\bigg)\bigg|\\
&\hspace{-0.3cm}\overset{(\ref{Gleichungalpha})}{\leq}J\Big(||f_k-f||_{L^\infty(X, \mu)}+||f||_{L^\infty(X, \mu)}\Big)^2+\sum_{j_1\neq j_2}\Big|\frac{1}{N}\sum_{n=1}^{N}c_n^{j_1}\overline{c_n^{j_2}}\Big|\\
&\hspace{-0.1cm}\overset{(\alpha)}{<}J\Big(||f_k-f||_{L^\infty(X, \mu)}+||f||_{L^\infty(X, \mu)}\Big)^2+\big(J^2-J\big)\delta\\
&\leq \Big(\sqrt{J}\big(||f_k-f||_{L^\infty(X, \mu)}+||f||_{L^\infty(X, \mu)}\big)+J\sqrt{\delta}\Big)^2.
\end{split}
\end{equation}
Aus $(\beta)$ gewinnen wir wie in $\eqref{Gleichung2.61}$\vspace{-0.15cm} 
\begin{equation}
\label{Gleichung2.82}
\begin{split}
\Big|\frac{1}{N}\sum_{n=1}^N c_ng(S^ny)\Big|
&>J(a-\delta).
\end{split}
\end{equation}

\noindent Es folgt\vspace{-0.21cm} 
\begin{equation}
\label{Gleichung2.83}
\begin{split}
J(a-\delta)&\overset{\eqref{Gleichung2.82}}{<}\Big|\frac{1}{N}\sum_{n=1}^N c_ng(S^ny)\Big|\\
&\hspace{-0.27cm}\overset{\text{Cauchy-}}{\underset{\text{Schwarz-U.}}{\leq}}\bigg(\frac{1}{N}\sum_{n=1}^N \big|c_n\big|^2\bigg)^{\frac{1}{2}}\cdot\bigg(\underbrace{\frac{1}{N}\sum_{n=1}^{N}\big|g(S^ny)\big|^2}_{\mathclap{<||g||_{L^\infty(Y, \nu)}}}\bigg)^{\frac{1}{2}}\\
&\overset{\eqref{Gleichung2.81}}{<}\bigg(\sqrt{J}\Big(||f_k-f||_{L^\infty(X, \mu)}+||f||_{L^\infty(X, \mu)}\Big)+J\sqrt{\delta}\bigg)||g||_{L^\infty(Y, \nu)}.
\end{split}
\end{equation}
\noindent Nach Wahl von $J$ in $\eqref{Gleichung2.77}$ gilt
\begin{equation*}
\label{Gleichung2.84}
\frac{\Big(||f||_{L^\infty(X, \mu)}+||f-f_k||_{L^\infty(X, \mu)}\Big)||g||_{L^\infty(Y, \nu)}}{\sqrt{J}}<\frac{a}{2}\cdot\frac{||f||_{L^\infty(X, \mu)}+||f-f_k||_{L^\infty(X, \mu)}}{||f||_{L^\infty(X, \mu)}+||f-f_k||_{L^\infty(X,\mu)}}=\frac{a}{2}.
\end{equation*}
Deshalb ist 
\begin{equation}
\label{Gleichung2.79b}
\frac{a}{2}<a-\frac{\Big(||f||_{L^\infty(X, \mu)}+||f-f_k||_{L^\infty(X, \mu)}\Big)||g||_{L^\infty(Y, \nu)}}{\sqrt{J}}
\end{equation}
erfüllt. Umstellen von \eqref{Gleichung2.83} nach $a$ liefert
\begin{equation} 
	\label{Gleichung2.80b}
	a<\frac{\big(||f_k-f||_{L^\infty(X,\mu)}+||f||_{L^\infty(X,\mu)}+\sqrt{\delta}\sqrt{J}\big)||g||_{L^\infty(Y,\nu)}}{\sqrt{J}}+\delta.
\end{equation} Zusammen mit \eqref{Gleichung2.79b} erhalten wir
\begin{equation*}
\frac{a}{2}<a-\frac{\Big(||f||_{L^\infty(X, \mu)}+||f-f_k||_{L^\infty(X, \mu)}\Big)||g||_{L^\infty(Y, \nu)}}{\sqrt{J}}\overset{\eqref{Gleichung2.80b}}{<}\sqrt{\delta}||g||_{L^\infty(Y, \nu)}+\delta.
\end{equation*}
 Da wir  $\delta>0$ beliebig klein wählen können, folgt $$\frac{a}{2}\leq 0.$$
\noindent Dies steht im Widerspruch zur Definition von $$a=\frac{\tilde{a}}{2}>0$$ in Schritt I/\eqref{Gleichung4.12c}. Die zu Beginn des Beweises getroffene Annahme 
$$	\nu\bigg(\Big\{y\in Y\bigg|\limsup\limits_{N\to\infty}\Big|\frac{1}{N}\sum_{n=1}^N f(T^nx)g(S^ny) \Big|>0 \Big.\Big\}\bigg)>0$$ \noindent ist deshalb falsch und damit ist Satz $\ref{Satz2.6}$ bewiesen.
\end{proof}

\chapter{Beweis für den allgemeinen Fall}
\label{allgemein}
In den vorangegangenen beiden Kapiteln \ref{Kapitel2} und \ref{Kapitel3} haben wir Bourgains Rückkehrzeitentheorem basierend auf der Darstellung in \cite{Assani2003} jeweils nur unter bestimmten Voraussetzungen bewiesen. Ziel der sich anschließenden Abschnitte \ref{Kapitel4.1} und \ref{Kapitel4.2} ist die Verallgemeinerung der gewonnenen Resultate auf das Rückkehrzeitentheorem von Bourgain:\vspace{0.3cm}\\
\noindent \textbf{Rückkehrzeitentheorem von Bourgain} \textit{
	Seien $(X, \mathfrak{A}, \mu, T)$ ein maßtheoretisches dynamisches System mit rein atomarer invarianter $\sigma$"=Algebra und $f \in L^\infty(X, \mu)$.
	Dann ist die Folge 	 $\big(f\left(T^n x\right)\big)_{n=1}^{\infty}$ für $\mu$-fast alle $x \in X$ 
	ein universell gutes Gewicht für die punktweise Konvergenz von $L^1$"=Funktionen.}\vspace{0.3cm}\\
\noindent Die dafür notwendigen Verallgemeinerungsschritte beruhen auf eigenen Überlegungen. In Kapitel \ref{Kapitel3} haben wir mithilfe eines Widerspruchsbeweises Bourgains Rückkehrzeitentheorem im Spezialfall von Satz \ref{Satz2.6}, das heißt für invertierbare, ergodische maßtheoretische dynamische Systeme $(X,\mathfrak{A}, \mu, T)$ und $(Y,\mathfrak{B}, \nu, S)$ sowie $f\in \mathcal{K}^\bot \cap L^\infty(X,\mu)$ und $g\in L^\infty(Y,\nu)$, gezeigt. Diesen Satz verallgemeinern wir in den Schritten I-IV zunächst auf den Fall  eines beliebigen maßtheoretischen dynamischen Systems $(Y,\mathfrak{B}, \nu, S)$. Anschließend lockern wir in den Schritten V - VII die Anforderungen an das System $(X,\mathfrak{A},\mu,T)$. Die Beweisschritte können wie folgt zusammengefasst werden:
\begin{enumerate}[I]
	\item Verallgemeinerung auf ein (nicht notwendig invertierbares) ergodisches maßtheoretisches dynamisches System $(Y,\mathfrak{B},\nu,S)$;\vspace{-0.1cm} 
	\item Verallgemeinerung auf ein (nicht notwendig ergodisches) maßtheoretisches dynamisches System $(Y,\mathfrak{B},\nu,S)$ auf einem Borel"=Wahrscheinlichkeitsraum;\vspace{-0.1cm}
	\item Verallgemeinerung auf ein beliebiges maßtheoretisches dynamisches System $(Y,\mathfrak{B},\nu,S)$, für welches $L^1(Y,\nu)$ separabel ist;\vspace{-0.1cm}
	\item Verallgemeinerung auf ein beliebiges maßtheoretisches dynamisches System $(Y,\mathfrak{B},\nu,S)$;\vspace{-0.1cm}
	\item Verallgemeinerung auf ein (nicht notwendig invertierbares) ergodisches maßtheoretisches dynamisches System $(X,\mathfrak{A},\mu,T)$ und $g\in L^\infty(Y,\nu)$;\vspace{-0.1cm}
	\item Verallgemeinerung auf ein  maßtheoretisches dynamisches System $(X,\mathfrak{A},\mu,T)$ mit rein atomarer invarianter $\sigma$"=Algebra  und $g\in L^\infty(Y,\nu)$;\vspace{-0.1cm}
	\item Verallgemeinerung auf den Fall $g\in L^1(Y,\nu)$ und damit auf das Rückkehrzeitentheorem von Bourgain.
\end{enumerate}

\section{Beweis für $f\in \mathcal{K}^\bot\cap L^\infty(X,\mu)$ und Gewichte für $L^\infty$-Funktionen}
\label{Kapitel4.1}
Ziel dieses Abschnittes ist es, unter der Annahme $f\in \mathcal{K}^\bot \cap L^\infty(X,\mu)$ und $g\in L^\infty(Y,\nu)$ in den Schritten I-IV die Anforderungen an das System $(Y,\mathfrak{B},\nu,S)$ zu lockern.
\subsection{Schritt I: Invertierbare Erweiterung}
\label{SchrittI}

\noindent Wir zeigen unter Rückgriff auf Satz \ref{Satz2.6} die folgende Proposition:
\begin{Proposition}
	\label{Behauptung1}
	Seien ein invertierbares, ergodisches maßtheoretisches dynamisches System $(X,\mathfrak{A},\mu,T)$ und eine Funktion $f\in \mathcal{K}^\bot\cap L^\infty(X,\mu)$ gegeben. Weiterhin seien ein ergodisches (nicht notwendig invertierbares) maßtheoretisches dynamisches System $(Y,\mathfrak{B},\nu,S)$  sowie eine Funktion $g\in L^\infty(Y,\nu)$ gegeben. Dann existiert eine Menge $X'\in \mathfrak{A}$ mit $\mu(X')=1$, welche unabhängig vom System $(Y,\mathfrak{B},\nu,S)$ und $g$ gewählt werden kann, sodass für alle $x\in X'$\vspace{0.15cm}  $$\lim\limits_{N\to\infty}\frac{1}{N}\sum\limits_{n=1}^N f(T^nx)g(S^ny)=0 \text{ für } \nu\text{-fast alle }y\in Y$$ erfüllt ist.
\end{Proposition}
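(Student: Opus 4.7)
The idea is to reduce the non-invertible setting on the $Y$-side to the invertible case already handled by Satz \ref{Satz2.6}. Specifically, I would construct the invertible extension $(\tilde{Y}, \tilde{\mathfrak{B}}, \tilde{\nu}, \tilde{S})$ of $(Y,\mathfrak{B},\nu,S)$ via Satz \ref{Satz99}, together with the factor map
$$\pi: \tilde{Y} \to Y,\qquad \pi(y) = y_0,$$
and lift $g\in L^\infty(Y,\nu)$ to $\tilde{g} := g\circ \pi \in L^\infty(\tilde{Y}, \tilde{\nu})$, noting that $\|\tilde{g}\|_{L^\infty(\tilde{Y},\tilde{\nu})} = \|g\|_{L^\infty(Y,\nu)}$ and that $\tilde{g}(\tilde{S}^n\tilde{y}) = g(\pi(\tilde{S}^n\tilde{y})) = g(S^n\pi(\tilde{y}))$ by the factor property.

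Since $(Y,\mathfrak{B},\nu,S)$ is ergodic, Satz \ref{Satz99} guarantees that $(\tilde{Y}, \tilde{\mathfrak{B}}, \tilde{\nu}, \tilde{S})$ is invertible and ergodic. Thus Satz \ref{Satz2.6} applies to the pair of systems $(X,\mathfrak{A},\mu,T)$ and $(\tilde{Y}, \tilde{\mathfrak{B}}, \tilde{\nu}, \tilde{S})$ together with $f \in \mathcal{K}^\bot \cap L^\infty(X,\mu)$ and $\tilde{g}\in L^\infty(\tilde{Y},\tilde{\nu})$. I obtain a set $X'\in \mathfrak{A}$ with $\mu(X')=1$, depending only on $(X,\mathfrak{A},\mu,T)$ and $f$, such that for every $x\in X'$
$$\lim_{N\to\infty}\frac{1}{N}\sum_{n=1}^N f(T^nx)\tilde{g}(\tilde{S}^n\tilde{y}) = 0 \quad \text{für }\tilde{\nu}\text{-fast alle } \tilde{y}\in \tilde{Y}.$$
Since the construction of $X'$ in Satz \ref{Satz2.6} is independent of the chosen auxiliary system, $X'$ is in particular independent of $(Y,\mathfrak{B},\nu,S)$ and of $g$.

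It remains to push the $\tilde{\nu}$-a.e.\ statement on $\tilde{Y}$ down to a $\nu$-a.e.\ statement on $Y$. Fix $x\in X'$ and define
$$Y' := \Big\{y\in Y\,\Big|\, \lim_{N\to\infty}\tfrac{1}{N}\sum_{n=1}^N f(T^nx)\,g(S^ny) = 0\Big\}\in \mathfrak{B},$$
which is measurable as the zero set of a limsup/liminf of measurable functions. Using $\tilde{g}(\tilde{S}^n\tilde{y}) = g(S^n\pi(\tilde{y}))$, any $\tilde{y}$ with $\pi(\tilde{y})\in Y\setminus Y'$ lies outside the $\tilde{\nu}$-full set obtained from Satz \ref{Satz2.6}; hence $\pi^{-1}(Y\setminus Y')$ is contained in a $\tilde{\nu}$-null set. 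By property $(3)$ of Satz \ref{Satz99}, $\nu(B) = \tilde{\nu}(\pi^{-1}(B))$ for $B\in \mathfrak{B}$, so $\nu(Y\setminus Y')=0$, i.e.\ $\nu(Y')=1$, which is exactly the claim.

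The step I expect to require the most care is the final push-down of the null set through $\pi$: one must verify that the inclusion $\pi^{-1}(Y\setminus Y')\subseteq \tilde{Y}\setminus \tilde{Y}'$ is genuinely valid (which relies only on $\tilde{g}(\tilde{S}^n\tilde{y})=g(S^n\pi(\tilde{y}))$ and on $f(T^nx)$ being independent of $\tilde{y}$), and that the measurability of $Y'$ justifies applying the identity $\nu = \pi_*\tilde{\nu}$. Everything else is a direct invocation of Satz \ref{Satz99} and Satz \ref{Satz2.6}.
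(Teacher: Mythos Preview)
Your proposal is correct and follows essentially the same approach as the paper: lift $g$ to $\tilde g = g\circ\pi$ on the invertible extension from Satz~\ref{Satz99}, apply Satz~\ref{Satz2.6} there, and push the $\tilde\nu$-a.e.\ conclusion down via $\nu = \pi_*\tilde\nu$. The paper phrases the final step as $\tilde Y'\subseteq \pi^{-1}(Y')$ and hence $\nu(Y')=\tilde\nu(\pi^{-1}(Y'))\geq \tilde\nu(\tilde Y')=1$, which is the contrapositive of your inclusion $\pi^{-1}(Y\setminus Y')\subseteq \tilde Y\setminus \tilde Y'$; the arguments are identical in substance.
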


\begin{proof}[\textbf{Beweis:}]
Es bezeichne $(\tilde{Y}, \tilde{\mathfrak{B}},\tilde{\nu},\tilde{S})$ die in Satz \ref{Satz99} definierte invertierbare Erweiterung des Systems $(Y,\mathfrak{B},\nu,S)$ sowie $\pi: \tilde{Y}\to Y: \ \tilde{y}\mapsto \tilde{y}_0$ die zugehörige Faktorabbildung. Zur Veranschaulichung dient das folgende Diagramm:\vspace{0.1cm} 
\begin{equation*} \begin{tikzcd}
	\tilde{Y} \arrow{d}[swap]{\pi} \arrow{r}{\tilde{S}} & \tilde{Y} \arrow{d}{\pi} \\
	Y\arrow{r}{S} & Y 
\end{tikzcd}\end{equation*}
\noindent Da $(Y,\mathfrak{B},\nu,S)$ ergodisch ist, gilt dies wegen Satz \ref{Satz99} auch für $(\tilde{Y},\tilde{\mathfrak{B}},\tilde{\nu},\tilde{S})$. Definiere\vspace{0.1cm} 
$$\tilde{g}:=g\circ \pi \in L^\infty(\tilde{Y},\tilde{\nu}).$$ Es bezeichne $X'\in \mathfrak{A}$ die in Satz \ref{Satz2.6} unabhängig von $(Y,\mathfrak{B},\nu,S)$ und $g$ gewählte Menge mit $\mu(X')=1$. Aufgrund der Ergodizität und Invertierbarkeit des Systems $(\tilde{Y},\tilde{\mathfrak{B}}, \tilde{\nu},\tilde{S})$ folgt mithilfe dieses Satzes für alle $x\in X'$\vspace{0.1cm} 
	\begin{equation} \label{asdf} \lim\limits_{N\to\infty} \frac{1}{N} \sum\limits_{n=1}^N f(T^nx) \tilde{g}(\tilde{S}^n\tilde{y}) = 0 \text{ für } \tilde{\nu}-\text{fast alle } \tilde{y}\in \tilde{Y}. \end{equation}
		Nach Definition von $(\tilde{Y}, \tilde{\mathfrak{B}}, \tilde{\nu}, \tilde{S})$, $\pi$ und $\tilde{g}$ ergibt sich \vspace{-0.15cm}
	\begin{equation}
	\label{asdf2}
	\begin{split}
	\lim\limits_{N\to\infty} \frac{1}{N} \sum\limits_{n=1}^N f(T^nx) \tilde{g}(\tilde{S}^n\tilde{y}) & = \lim\limits_{N\to\infty} \frac{1}{N} \sum\limits_{n=1}^N f(T^nx) \big(g\circ \pi\big)(\tilde{S}^n\tilde{y})\\
	&=\lim\limits_{N\to\infty} \frac{1}{N} \sum\limits_{n=1}^N f(T^nx) g\big(S^n\big(\pi (\tilde{y})\big)\big).
	\end{split}
	\end{equation}
	Wir fixieren nun ein $x\in X'$. Wegen $\eqref{asdf}$ existiert eine Menge $\tilde{Y'}\in \tilde{\mathfrak{B}}$ mit $\tilde{\nu}(\tilde{Y'})=1$, sodass für alle $\tilde{y}\in \tilde{Y}'$ \vspace{-0.1cm}\begin{equation} \label{Gleichung1c} \lim\limits_{N\to\infty} \frac{1}{N} \sum\limits_{n=1}^N f(T^nx) g\big(S^n\big(\pi (\tilde{y})\big)\big)=0 \end{equation} erfüllt ist.
	Die Funktion $y\mapsto \lim_{N\to\infty} \frac{1}{N}\sum_{n=1}^N f(T^nx)g(S^ny)$ ist als Grenzwert messbarer Funktionen messbar. Folglich ist die Menge \vspace{-0.1cm}$$Y':=\Big\{y\in Y\Big| \lim_{N\to\infty} \frac{1}{N} \sum_{n=1}^N f(T^nx)g(S^ny)=0\Big\}$$ ein Element der $\sigma$"=Algebra $\mathfrak{B}$. Aufgrund der Identität \eqref{asdf2} folgt \vspace{-0.1cm}\begin{equation}
	\label{Gleichung413} \tilde{Y}'\subseteq \pi^{-1}(Y').
	\end{equation}
	\noindent Unter Verwendung der Definition von $\tilde{\nu}$ erhalten wir schließlich\vspace{-0.1cm}
	\begin{equation} \label{Gleichung5.5}\nu(Y')=\tilde{\nu}\big(\pi^{-1}(Y')\big)\overset{\eqref{Gleichung413}}{\geq} \tilde{\nu}(\tilde{Y}')=1.\end{equation}
	Da wir $x\in X'$ beliebig gewählt haben, ist die Proposition damit bewiesen.
\end{proof}

\subsection{Schritt II: Ergodische Zerlegung}
\label{SchrittIIb}
Aufbauend auf Proposition \ref{Behauptung1} und der ergodischen Zerlegung aus Theorem \ref{Zerlegung} beweisen wir die folgende Proposition:
\begin{Proposition}
	\label{Behauptung2}
	Seien ein invertierbares, ergodisches maßtheoretisches dynamisches System $(X,\mathfrak{A},\mu,T)$ und eine Funktion $f\in \mathcal{K}^\bot\cap L^\infty(X,\mu)$ gegeben. Weiterhin seien ein (nicht notwendig ergodisches) maßtheoretisches dynamisches System $(Y,\mathfrak{B},\nu,S)$ auf einem Borel"=Wahrscheinlichkeitsraum $(Y,\mathfrak{B},\nu)$ sowie eine Funktion $g\in L^\infty(Y,\nu)$ gegeben. Dann existiert eine Menge $X'\in \mathfrak{A}$ mit $\mu(X')=1$, welche unabhängig vom System $(Y,\mathfrak{B},\nu,S)$ und $g$ gewählt werden kann, sodass für alle $x\in X'$\vspace{-0.1cm} $$\lim\limits_{N\to\infty}\frac{1}{N}\sum\limits_{n=1}^N f(T^nx)g(S^ny)=0 \text{ für } \nu\text{-fast alle }y\in Y$$ erfüllt ist.
\end{Proposition}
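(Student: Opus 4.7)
The idea is to decompose the non-ergodic system $(Y,\mathfrak{B},\nu,S)$ into its ergodic components via Theorem \ref{Zerlegung} and then apply Proposition \ref{Behauptung1} on each component, exploiting the fact that the exceptional set $X' \in \mathfrak{A}$ produced there is independent of the target system and of the weight. First, I would fix a representative of $g$ that is everywhere bounded by $C := \|g\|_{L^\infty(Y,\nu)}$, so that $g$ automatically lies in $L^\infty(Y,\nu_z)$ for every probability measure $\nu_z$ on $\mathfrak{B}$. Since $(Y,\mathfrak{B},\nu)$ is a Borel probability space, Theorem \ref{Zerlegung} supplies a Borel probability space $(Z,\mathfrak{B}_Z,\lambda)$ and a measurable map $z\mapsto \nu_z$ such that $\nu_z$ is $S$-invariant and ergodic for $\lambda$-fast alle $z\in Z$, and
$$\int_Y h\, d\nu = \int_Z \int_Y h\, d\nu_z\, d\lambda(z) \quad \text{für alle } h \in L^\infty(Y,\nu).$$

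Now let $X' \in \mathfrak{A}$ denote the full-measure set provided by Proposition \ref{Behauptung1} applied to $(X,\mathfrak{A},\mu,T)$ and $f$; this $X'$ is independent of the choice of target system and weight, which is precisely what makes the reduction work. For each $z$ in the full $\lambda$-measure set on which $\nu_z$ is $S$-invariant and ergodic, Proposition \ref{Behauptung1} applied to the ergodic system $(Y,\mathfrak{B},\nu_z,S)$ and to $g \in L^\infty(Y,\nu_z)$ yields, for every $x \in X'$,
$$\lim_{N\to\infty}\frac{1}{N}\sum_{n=1}^N f(T^nx)g(S^ny) = 0 \quad \text{für } \nu_z\text{-fast alle } y \in Y.$$

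Finally, I would fix $x \in X'$ and consider the set
$$Y'_x := \Big\{y\in Y \,\Big|\, \lim_{N\to\infty}\frac{1}{N}\sum_{n=1}^N f(T^nx)g(S^ny) = 0 \Big\},$$
which belongs to $\mathfrak{B}$ since it is precisely the measurable set on which $\limsup_{N}$ and $\liminf_{N}$ of the $\mathfrak{B}$-measurable functions $F_N(y) := \frac{1}{N}\sum_{n=1}^N f(T^nx)g(S^ny)$ coincide and both equal zero. The previous step gives $\nu_z(Y'_x)=1$ for $\lambda$-fast alle $z$, and applying the integration formula from Theorem \ref{Zerlegung} to $h:=\mathbbm{1}_{Y'_x}\in L^\infty(Y,\nu)$ yields
$$\nu(Y'_x) \;=\; \int_Z \nu_z(Y'_x)\,d\lambda(z) \;=\; 1,$$
as required.

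The main technical care lies in two places: first, ensuring that a single $X'$ works uniformly for all ergodic components simultaneously, which is exactly handled by the system- and weight-independence of $X'$ stated in Proposition \ref{Behauptung1}; and second, arranging a bounded version of $g$ so that the hypothesis $g\in L^\infty(Y,\nu_z)$ is available for every component, not only $\nu$-almost every one. Both are mild bookkeeping obstacles; the substantive work has already been done in Proposition \ref{Behauptung1}, and the passage from $\nu_z$-fast\ everywhere convergence to $\nu$-fast\ everywhere convergence is then an immediate application of the ergodic-decomposition integral.
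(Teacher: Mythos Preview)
Your proof is correct and follows essentially the same approach as the paper: take the universal set $X'$ from Proposition~\ref{Behauptung1}, apply the ergodic decomposition from Theorem~\ref{Zerlegung}, use Proposition~\ref{Behauptung1} on each ergodic component $(Y,\mathfrak{B},\nu_z,S)$, and then integrate over $z$ to recover the conclusion for $\nu$. The only cosmetic differences are that the paper handles $g\in L^\infty(Y,\nu_z)$ by showing $\nu_z(\{|g|>\|g\|_\infty\})=0$ for $\lambda$-a.e.\ $z$ rather than fixing an everywhere bounded representative, and it integrates $\limsup_N|F_N|$ rather than $\mathbbm{1}_{Y'_x}$ in the final step.
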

\begin{proof}[\textbf{Beweis:}]
Sei die Menge $X'\in \mathfrak{A}$ mit $\mu(X')=1$ aus Proposition \ref{Behauptung1} gewählt. Weiterhin sei die ergodische Zerlegung von $(Y,\mathfrak{B},\nu)$ gegeben durch den Borel"=Wahrscheinlichkeitsraum $(Z,\mathfrak{B}_Z,\lambda)$ und die messbare Abbildung $Z\to M_1(Y,\mathfrak{B}),\ z\mapsto \nu_z$  aus Theorem \ref{Zerlegung}, wobei $M_1(Y,\mathfrak{B})$ die Menge der Borel"=Wahrscheinlichkeitsmaße auf $(Y,\mathfrak{B})$ bezeichne. Dann ist $\nu_z$ für $\lambda$"=fast alle $z\in Z$ ein ergodisches Wahrscheinlichkeitsmaß auf $(Y,\mathfrak{B},\nu)$.\\
\noindent Da $g$ nach Voraussetzung ein Element aus $L^\infty(Y,\nu)$ ist, existiert eine Menge $\mathcal{N}\in \mathfrak{B}$ mit $\nu(\mathcal{N})=0$, sodass 
\begin{equation} 
\label{Gleichung4.1b}
|g(y)|\leq ||g||_{L^\infty(Y,\nu)} \text{ für alle }y\in Y\setminus \mathcal{N} \end{equation}
erfüllt ist. Wir erhalten mithilfe der Identität \eqref{Gleichung5.1} $$0=\nu(\mathcal{N})=\int_Y \mathbbm{1}_{\mathcal{N}}d\nu=\int_Z\underbrace{\int_Y\mathbbm{1}_{\mathcal{N}}d\nu_z}_{\geq 0}d\lambda(z).$$ \noindent Für $\lambda$"=fast alle $z\in Z$ ist folglich
	$\nu_z(\mathcal{N})=0$ sowie wegen \eqref{Gleichung4.1b} $g\in L^\infty(Y,\nu_z)$ erfüllt. Mithilfe von Proposition \ref{Behauptung1} erhalten wir deshalb für alle $x\in X'$ und $\lambda$"=fast alle $z\in Z$ 
$$\lim\limits_{N\to\infty}\frac{1}{N}\sum_{n=1}^N f(T^nx)g(S^ny)=0 \text{ für } \nu_z\text{-fast alle } y\in Y.$$ Dies liefert für $x\in X'$ und $\lambda$"=fast alle $z\in Z$ $$\limsup\limits_{N\to\infty} \Big|\frac{1}{N}\sum_{n=1}^Nf(T^nx)g(S^ny)\Big|=0 \text{ für }\nu_z\text{-fast alle }y\in Y$$ sowie 
\begin{equation}
\label{Gleichung4.7}
\int\limits_Y \limsup\limits_{N\to\infty} \Big|\frac{1}{N}\sum_{n=1}^Nf(T^nx)g(S^ny)\Big|d\nu_z(y)=0.
\end{equation}
Sei nun $x\in X'$ beliebig gewählt. Dann gilt
\begin{equation*}
\begin{split}
\int\limits_Y\limsup\limits_{N\to\infty}\Big|\frac{1}{N}\sum_{n=1}^Nf(T^nx)g(S^ny)\Big|d\nu(y)
&\overset{\text{Thm.}}{\underset{\ref{Zerlegung}}{=}}\int\limits_Z \underbrace{\int\limits_Y \limsup\limits_{N\to\infty}\Big|\frac{1}{N}\sum_{n=1}^Nf(T^nx)g(S^ny)\Big| d\nu_z(y)}_{\overset{\eqref{Gleichung4.7}}{=} 0 \text{ für } \lambda\text{-fast alle } z\in Z} d\lambda(z)\\
&\hspace{0.2cm}=0.
\end{split}
\end{equation*}
Dies liefert $$\limsup\limits_{N\to\infty}\Big|\frac{1}{N}\sum\limits_{n=1}^N f(T^nx)g(S^ny)\Big|=0 \text{ für } \nu\text{-fast alle } y\in Y.$$ Folglich existiert für $\nu$"=fast alle $y\in Y$ der Grenzwert $\lim\limits_{N\to\infty}\frac{1}{N}\sum\limits_{n=1}^N f(T^nx)g(S^ny)$ und ist gleich $0$.\\
\noindent Da wir $x\in X'$ beliebig gewählt haben, ist die Proposition damit bewiesen.
\end{proof}

\subsection{Schritt III: Metrisches Modell}
Mithilfe von Theorem \ref{MetrischesModell} beweisen wir aufbauend auf Proposition \ref{Behauptung2} die folgende Proposition:
 \begin{Proposition}
 	\label{Behauptung3}
 	Seien ein invertierbares, ergodisches maßtheoretisches dynamisches System $(X,\mathfrak{A},\mu,T)$ und eine Funktion $f\in \mathcal{K}^\bot\cap L^\infty(X,\mu)$ gegeben. Weiterhin seien ein maßtheoretisches dynamisches System $(Y,\mathfrak{B},\nu,S)$, für welches $L^1(Y,\nu)$ ein separabler Banachraum ist, sowie eine Funktion $g\in L^\infty(Y,\nu)$ gegeben. Dann existiert eine Menge $X'\in \mathfrak{A}$ mit $\mu(X')=1$, welche unabhängig vom System $(Y,\mathfrak{B},\nu,S)$ und $g$ gewählt werden kann, sodass für alle $x\in X'$ $$\lim\limits_{N\to\infty}\frac{1}{N}\sum\limits_{n=1}^N f(T^nx)g(S^ny)=0 \text{ für } \nu\text{-fast alle }y\in Y$$ erfüllt ist.	
 \end{Proposition}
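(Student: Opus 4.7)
Der Plan ist, zunächst mithilfe des metrischen Modells (Theorem \ref{MetrischesModell}) das System $(Y,\mathfrak{B},\nu,S)$ durch ein System auf einem Borel"=Wahrscheinlichkeitsraum zu ersetzen, dort Proposition \ref{Behauptung2} anzuwenden und die so gewonnene Grenzwertaussage mithilfe von Behauptung \ref{Behauptung17} zurück auf das ursprüngliche System zu übertragen.

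Zunächst würde ich Theorem \ref{MetrischesModell} auf $(Y,\mathfrak{B},\nu,S)$ anwenden (hier geht die Separabilitätsvoraussetzung an $L^1(Y,\nu)$ ein) und so ein maßtheoretisches dynamisches System $(K,\mathfrak{B}_K,\lambda,\tilde{S})$ auf einem Borel"=Wahrscheinlichkeitsraum sowie einen Markov"=Isomorphismus $\Phi:L^1(Y,\nu)\to L^1(K,\lambda)$ mit $\Phi\circ S=\tilde{S}\circ \Phi$ gewinnen. Da $\Phi$ eine Markov"=Einbettung ist, folgt $|\Phi g|=\Phi|g|\leq ||g||_{L^\infty(Y,\nu)}\cdot \mathbbm{1}_K$ $\lambda$"=fast überall, sodass $\Phi g\in L^\infty(K,\lambda)$ gilt. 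Die Anwendung von Proposition \ref{Behauptung2} auf $(K,\mathfrak{B}_K,\lambda,\tilde{S})$ und $\Phi g$ liefert dann eine Menge $X'\in \mathfrak{A}$ mit $\mu(X')=1$, welche unabhängig von $(K,\mathfrak{B}_K,\lambda,\tilde{S})$ und $\Phi g$ -- und damit auch unabhängig von $(Y,\mathfrak{B},\nu,S)$ und $g$ -- gewählt werden kann, sodass für alle $x\in X'$
$$\lim_{N\to\infty}\frac{1}{N}\sum_{n=1}^N f(T^nx)(\Phi g)(\tilde{S}^n k)=0 \text{ für } \lambda\text{"=fast alle } k\in K$$
erfüllt ist.

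Im letzten Schritt würde ich für ein fixiertes $x\in X'$ die Folge $\big(F_N\big)_{N=1}^\infty \subset L^\infty(Y,\nu)\subset L^1(Y,\nu)$ mit $F_N(y):=\frac{1}{N}\sum_{n=1}^N f(T^nx)g(S^ny)$ betrachten. Aufgrund der Linearität von $\Phi$ und der induktiv aus $\Phi\circ S=\tilde{S}\circ \Phi$ folgenden Identität $\Phi\circ S^n=\tilde{S}^n\circ \Phi$ ergibt sich $\Phi(F_N)(k)=\frac{1}{N}\sum_{n=1}^N f(T^nx)(\Phi g)(\tilde{S}^n k)$, sodass $(\Phi F_N)_{N=1}^\infty$ punktweise $\lambda$"=fast überall gegen $0$ konvergiert. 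Weiterhin gilt $|F_N|\leq ||f||_{L^\infty(X,\mu)}\cdot |g|$ $\nu$"=fast überall, sodass $(F_N)_{N=1}^\infty$ durch $||f||_{L^\infty(X,\mu)}\cdot |g|\in L^1(Y,\nu)$ majorisiert wird. Die Anwendung von Behauptung \ref{Behauptung17} auf die Markov"=Einbettung $\Phi$ und die Folge $(F_N)_{N=1}^\infty$ liefert schließlich $F_N\to 0$ punktweise $\nu$"=fast überall.

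Die Hauptschwierigkeit sehe ich in der sauberen Verknüpfung der Intertwining"=Eigenschaft $\Phi\circ S=\tilde{S}\circ \Phi$ (die auf Ebene von Äquivalenzklassen bzw.\ Koopmanoperatoren formuliert ist) mit der punktweise formulierten Konvergenzaussage aus Proposition \ref{Behauptung2}. Punktweise Aussagen lassen sich unter einer Markov"=Einbettung im Allgemeinen nicht direkt übertragen; das entscheidende Hilfsmittel ist die Majorisierungsbedingung in Behauptung \ref{Behauptung17}, welche im vorliegenden Fall aufgrund der Beschränktheit von $f$ und $g$ mühelos gegeben ist.
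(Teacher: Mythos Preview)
Your proposal is correct and follows essentially the same route as the paper: apply Theorem \ref{MetrischesModell} to pass to a Borel model, use Proposition \ref{Behauptung2} there, and transfer the pointwise convergence back via Behauptung \ref{Behauptung17} with the obvious $L^1$-majorant. The only cosmetic difference is that the paper justifies $\Phi g\in L^\infty(K,\lambda)$ by noting that $\Phi$ maps characteristic functions to characteristic functions, whereas you use the embedding identity $|\Phi g|=\Phi|g|\leq \|g\|_{L^\infty}\mathbbm{1}_K$ directly; both arguments are fine.
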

\begin{proof}[\textbf{Beweis:}]
 Da $L^1(Y,\nu)$ nach Voraussetzung ein separabler Banachraum ist, existieren ein maßtheoretisches dynamisches System $(K,\mathfrak{B}_K,\lambda,\tilde{S})$ auf einem Borel"=Wahrscheinlichkeitsraum $(K,\mathfrak{B}_K,\lambda)$ sowie ein Markov"=Isomorphismus $$\Phi: L^1(Y,\nu)\to L^1(K,\lambda),$$ für welchen 
 \begin{equation} 
 \label{Gleichung4.13}\tilde{S}\circ \Phi=\Phi \circ S
 \end{equation}
 erfüllt ist. Zur Veranschaulichung dient das folgende Diagramm: \begin{equation*} \begin{tikzcd}
 L^1(Y,\nu) \arrow{d}[swap]{\Phi} \arrow{r}{S} & L^1(Y,\nu) \arrow{d}{\Phi} \\
 L^1(K,\lambda)\arrow{r}{\tilde{S}} & L^1(K,\lambda) 
 \end{tikzcd}
 \end{equation*}
 \noindent Es bezeichne $X'\in \mathfrak{A}$ mit $\mu(X')=1$ die Menge  aus Proposition \ref{Behauptung2}. Wegen $f\in L^\infty(X,\mu)$ können wir ohne Beschränkung der Allgemeinheit annehmen, dass $\text{für alle }x\in X' \text{ und } n\in \mathbb{N}$ \begin{equation}
 \label{Gleichung5.2}
 |f(T^nx)|\leq ||f||_{L^\infty(X,\mu)} 
 \end{equation} erfüllt ist. Sei $x\in X'$ fest gewählt. Definiere für $N\in \mathbb{N}$ Funktionen $h\hspace{-0.03cm}_{N}\in L^\infty(Y,\nu)\subseteq L^1(Y,\nu)$ durch
 \begin{equation}
 \label{Gleichung4.14}
 h\hspace{-0.03cm}_{N}(y):= \frac{1}{N}\sum_{n=1}^Nf(T^nx)g(S^ny).
 \end{equation}

\noindent Das Ziel der nachfolgenden Berechnungen ist zu zeigen, dass für $\nu$"=fast alle $y\in Y$ der Grenzwert $\lim_{N\to\infty}h\hspace{-0.03cm}_{N}(y)$ existiert sowie $\lim_{N\to\infty}h\hspace{-0.03cm}_{N}(y)=0$ erfüllt ist. Für $N\in\mathbb{N}$ betrachten wir dazu zunächst $\Phi h\hspace{-0.03cm}_{N}$. Es gilt
\begin{equation}
\label{Gleichung4.15}
\begin{split}
\Phi h\hspace{-0.03cm}_{N}&=\Phi\Big(\frac{1}{N}\sum_{n=1}^N f(T^nx)S^ng\Big)\\
&\hspace{-0.45cm}\overset{\text{Linearität}}{=}\frac{1}{N}\sum_{n=1}^N f(T^nx)\Phi(S^ng)\\
&\hspace{-0.4cm}\overset{\text{Induktion}}{\underset{\eqref{Gleichung4.13}}{=}} \frac{1}{N}\sum_{n=1}^N f(T^nx)\tilde{S}^n\underbrace{\Phi g}_{=:\tilde{g}}.
\end{split}
\end{equation}
Da $\Phi$ charakteristische Funktionen auf charakteristische Funktionen abbildet (siehe \cite[S.218]{Eisner2014}) und annahmegemäß $g\in L^\infty(Y,\nu)$ sowie weiterhin wegen \eqref{Gleichung4.16} $\Phi\geq 0$ erfüllt ist, folgt \begin{equation} \label{Gleichung5.3} \tilde{g}=\Phi g\in L^\infty(K,\lambda).
\end{equation}
\noindent Da $(K,\mathfrak{B}_K,\lambda,\tilde{S})$ ein maßtheoretisches dynamisches System auf einem Borel"=Wahrscheinlichkeitsraum ist und wir $x\in X'$ fixiert haben, folgt aus Proposition \ref{Behauptung2} für $\lambda$"=fast alle $\tilde{y}\in K$:
\begin{equation}
\label{Gleichung4.17}
0=\lim_{N\to\infty}\frac{1}{N}\sum_{n=1}^N f(T^nx)\tilde{g}(\tilde{S}^n\tilde{y})=\lim\limits_{N\to\infty}\frac{1}{N}\sum_{n=1}^N f(T^nx)\tilde{S}^n\tilde{g}(\tilde{y}).
\end{equation}
\noindent Wegen \eqref{Gleichung4.15} konvergiert deshalb die Folge $\big(\Phi h\hspace{-0.03cm}_{N}\big)_{N=1}^\infty$  punktweise $\lambda$"=fast überall gegen $0$.
Da nach Definition der Funktionen $\big(h\hspace{-0.03cm}_{N}\big)_{N=1}^\infty$ in \eqref{Gleichung4.14} für  alle $N\in \mathbb{N}$ $$||h\hspace{-0.03cm}_{N}||_{L^\infty(Y,\nu)}\leq ||f||_{L^\infty(X,\mu)}\cdot ||g||_{L^\infty(Y,\nu)}$$ erfüllt ist, konvergiert die Folge $\big(h\hspace{-0.03cm}_{N}\big)_{N=1}^\infty=\big(\frac{1}{N}\sum_{n=1}^N f(T^nx)g(S^n(\cdot)\big)_{N=1}^\infty$ nach Behauptung \ref{Behauptung17}  punktweise $\nu$"=fast überall gegen $0$. \noindent Da wir $x\in X'$ beliebig gewählt haben, ist die Proposition \ref{Behauptung3} damit bewiesen.
\end{proof}

\subsection{Schritt IV: Separabilität}
\label{AbschnittIV}
Wir beweisen die folgende Proposition:
\begin{Proposition}
	\label{Behauptung5}
Seien ein beliebiges invertierbares, ergodisches maßtheoretisches dynamisches System $(X,\mathfrak{A},\mu,T)$ und eine Funktion $f\in \mathcal{K}^\bot\cap L^\infty(X,\mu)$ gegeben. Weiterhin seien ein beliebiges maßtheoretisches dynamisches System $(Y,\mathfrak{B},\nu,S)$ und eine Funktion $g\in L^\infty(Y,\nu)$ gegeben. Dann existiert eine Menge $X'\in \mathfrak{A}$ mit $\mu(X')=1$, welche unabhängig vom System $(Y,\mathfrak{B},\nu,S)$ und $g$ gewählt werden kann, sodass für alle $x\in X'$ $$\lim\limits_{N\to\infty}\frac{1}{N}\sum\limits_{n=1}^\infty f(T^nx)g(S^ny)=0 \text{ für } \nu\text{-fast alle }y\in Y$$ erfüllt ist. Das heißt, für ein beliebiges ergodisches, invertierbares maßtheoretisches dynamisches System $(X,\mathfrak{A},\mu,T)$ und $f\in \mathcal{K}^\bot\cap L^\infty(X,\mu)$ sowie $\mu$"=fast alle $x\in X$ ist die Folge $\big(f(T^nx)\big)_{n=1}^N$ ein universell gutes Gewicht für die punktweise Konvergenz von $L^\infty$"=Funktionen.
\end{Proposition}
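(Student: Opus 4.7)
Der Plan ist, den allgemeinen Fall auf Proposition \ref{Behauptung3} zu reduzieren, indem wir eine geeignete abzählbar erzeugte, $S$"=invariante Unter"=$\sigma$"=Algebra $\mathfrak{B}_0\subseteq \mathfrak{B}$ konstruieren. Konkret fixieren wir einen Vertreter von $g\in L^\infty(Y,\nu)$; da die Borelsche $\sigma$"=Algebra auf $\mathbb{C}$ abzählbar erzeugt ist, existiert eine abzählbare Familie $C_0\subseteq \mathfrak{B}$ derart, dass $g$ bezüglich $\sigma(C_0)$ messbar ist. Wir setzen
$$C:=\bigcup_{n=0}^\infty S^{-n}C_0,\qquad \mathfrak{B}_0:=\sigma(C).$$
Die Menge $C$ ist abzählbar und nach Konstruktion gilt $S^{-1}C_0\subseteq C$, sodass $S^{-1}\mathfrak{B}_0\subseteq \mathfrak{B}_0$ erfüllt ist. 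Insbesondere ist $S$ messbar von $(Y,\mathfrak{B}_0)$ nach $(Y,\mathfrak{B}_0)$ und, wegen der Maßerhaltung von $S$ bezüglich $\nu$, auch maßerhaltend bezüglich $\nu_0:=\nu_{\mid_{\mathfrak{B}_0}}$.

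Im zweiten Schritt gehen wir zur Vervollständigung $(Y,\overline{\mathfrak{B}_0},\overline{\nu_0})$ über. Da $\mathfrak{B}_0$ abzählbar erzeugt ist, liefert Behauptung \ref{Behauptung4} zusammen mit Bemerkung \ref{Bemerkung5.5}, dass $L^1(Y,\overline{\mathfrak{B}_0},\overline{\nu_0})$ ein separabler Banachraum ist. Da $\nu_0$"=Nullmengen in $\mathfrak{B}_0$ unter $S^{-1}$ in $\nu_0$"=Nullmengen übergehen, ist $S$ auch auf dem vervollständigten Raum eine messbare, maßerhaltende Abbildung; folglich bildet $(Y,\overline{\mathfrak{B}_0},\overline{\nu_0},S)$ ein maßtheoretisches dynamisches System im Sinne dieser Arbeit, auf welches Proposition \ref{Behauptung3} anwendbar ist. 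Da die dort konstruierte Menge $X'\in \mathfrak{A}$ unabhängig von $(Y,\mathfrak{B},\nu,S)$ und $g$ gewählt werden kann, erhalten wir für alle $x\in X'$:
$$\lim_{N\to\infty}\frac{1}{N}\sum_{n=1}^N f(T^nx)g(S^ny)=0 \text{ für } \overline{\nu_0}\text{-fast alle } y\in Y.$$

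Im dritten Schritt übersetzen wir diese $\overline{\nu_0}$"=fast sichere Aussage in eine $\nu$"=fast sichere Aussage: Die Ausnahmemenge $\mathcal{N}\in \overline{\mathfrak{B}_0}$ mit $\overline{\nu_0}(\mathcal{N})=0$ ist in einer $\nu_0$"=Nullmenge $\mathcal{N}'\in \mathfrak{B}_0\subseteq \mathfrak{B}$ mit $\nu(\mathcal{N}')=\nu_0(\mathcal{N}')=0$ enthalten. Wegen der Vollständigkeit von $(Y,\mathfrak{B},\nu)$ folgt $\mathcal{N}\in \mathfrak{B}$ und $\nu(\mathcal{N})=0$, sodass die gewünschte Grenzwertaussage für $\nu$"=fast alle $y\in Y$ gilt. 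Die Hauptschwierigkeit liegt meiner Einschätzung nach in der sauberen Verifikation der $S$"=Invarianz und abzählbaren Erzeugtheit von $\mathfrak{B}_0$ sowie in der korrekten Handhabung der Vervollständigung; die eigentliche dynamische Substanz des Beweises steckt bereits in Proposition \ref{Behauptung3}.
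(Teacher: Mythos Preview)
Your proposal is correct and follows essentially the same approach as the paper: reduce to a countably generated $S$-invariant sub-$\sigma$-algebra (the paper builds it via $\{(S^k g)^{-1}((a,b)):k\ge 0,\ a,b\in\mathbb{Q}\}$, you via $\bigcup_{n\ge 0}S^{-n}C_0$, which amounts to the same thing), pass to its completion, invoke Behauptung~\ref{Behauptung4} with Bemerkung~\ref{Bemerkung5.5} for separability of $L^1$, and apply Proposition~\ref{Behauptung3}. Your final step, embedding the $\overline{\nu_0}$-null exceptional set into a $\nu$-null set via $\overline{\mathfrak{B}_0}\subseteq\mathfrak{B}$, is exactly how the paper concludes.
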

\begin{proof}[\textbf{Beweis:}]
	Die Beweisidee ist, eine abzählbar erzeugte, $S$"=invariante Unter"=$\sigma$"=Algebra von $\mathfrak{B}$ zu konstruieren, bezüglich welcher die Abbildungen $S^k g,\ k\in \mathbb{N}\cup \{0\}$ messbar sind. Anschließend ersetzen wir $\mathfrak{B}$ durch die Vervollständigung dieser Unter"=$\sigma$"=Algebra und wenden Proposition \ref{Behauptung3} in Verbindung mit Behauptung \ref{Behauptung4} an.\\
	\noindent Ohne Beschränkung der Allgemeinheit können wir annehmen, dass $g$ eine reellwertige Funktion ist. (Falls diese Voraussetzung nicht erfüllt ist, führen wir die nachfolgende Konstruktion zunächst getrennt für den Real"= und Imaginärteil von $g$ durch und vereinigen anschließend die beiden abzählbaren Erzeuger.) Es ist bekannt, dass die Borelsche $\sigma$"=Algebra $\mathfrak{B}_{\mathbb{R}}$ in $\mathbb{R}$ abzählbar erzeugt ist von den offenen Intervallen mit rationalen Endpunkten. Das heißt, es gilt 
	$$\mathfrak{B}_{\mathbb{R}}=\sigma\Big(\{(a,b)\big| \ a,b\in \mathbb{Q}, \ a<b\}\Big).$$ 
Außerdem ist eine Abbildung $h: Y\to \mathbb{R}$ genau dann $\mathfrak{B}$"=$\mathfrak{B}_{\mathbb{R}}$"=messbar, wenn für alle Intervalle $(a,b)\subset \mathbb{R}$ mit $a,b\in \mathbb{Q},a<b$ 
$$h^{-1}\big((a,b)\big)\in \mathfrak{B}$$ erfüllt ist. Wir betrachten nun die Menge 
$$E:=\big\{(S^kg)^{-1}\big((a,b)\big)| \ k\in \mathbb{N}\cup\{0\},\ a,b\in \mathbb{Q}, a<b\big\}.$$
Da die Menge der offenen Intervalle aus $\mathbb{R}$ mit rationalen Endpunkten abzählbar ist, ist auch die Menge $E$ abzählbar. Weiterhin definieren wir $\mathfrak{D}\subseteq \mathfrak{B}$ als die von $E$ erzeugte $\sigma$"=Algebra, das heißt, es gelte $$\mathfrak{D}:=\sigma(E).$$ 
Die $\sigma$"=Algebra $\mathfrak{D}$ ist so konstruiert, dass die Abbildungen $S^kg: Y\to \mathbb{R},\ k\in \mathbb{N}\cup \{0\}$\  $\mathfrak{D}$"=$\mathfrak{B}_{\mathbb{R}}$"=messbar sind. Wir überprüfen außerdem leicht, dass $\mathfrak{D}$ $S$"=invariant ist. Diese Eigenschaften bleiben auch erhalten, wenn wir zur Vervollständigung $\overline{\mathfrak{D}}$ der $\sigma$"=Algebra $\mathfrak{D}$ übergehen. Da $\mathfrak{B}$ annahmegemäß vollständig ist, gilt weiterhin $\overline{\mathfrak{D}}\subseteq\mathfrak{B}$.  Im Folgenden betrachten wir nun den Wahrscheinlichkeitsraum $(Y,\overline{\mathfrak{D}}, \nu_{\mid_{\overline{\mathfrak{D}}}})$. Dabei bezeichne $\nu_{\mid_{\overline{\mathfrak{D}}}}$ die Einschränkung von $\nu$ auf $\overline{\mathfrak{D}}$.\\
\noindent Wegen Behauptung \ref{Behauptung4} in Verbindung mit Bemerkung \ref{Bemerkung5.5} ist $L^1(Y,\overline{\mathfrak{D}}, \nu_{\mid_{\overline{\mathfrak{D}}}})$ ein separabler Banachraum und wir können $g\in L^\infty(Y,\nu)$  als Element von $L^\infty(Y,\overline{\mathfrak{D}},\nu_{\mid_{\overline{\mathfrak{D}}}})$ auffassen. Folglich ist die Proposition \ref{Behauptung3} auf das maßtheoretische dynamische System $(Y,\overline{\mathfrak{D}},\nu_{\mid_{\overline{\mathfrak{D}}}},S)$ sowie die Funktion $g\in L^\infty(Y,\overline{\mathfrak{D}},\nu_{\mid_{\overline{\mathfrak{D}}}})$ anwendbar.\\ Es existiert also eine Menge $X'\in \mathfrak{A}$ mit $\mu(X')=1$, welche wir unabhängig vom System $(Y,\overline{\mathfrak{D}},\nu_{\mid_{\overline{\mathfrak{D}}}},S)$ und $g$ wählen können, sodass für ein beliebiges $x\in X'$ eine Menge $Y'\in \overline{\mathfrak{D}}$ mit $\nu_{\mid_{\overline{\mathfrak{D}}}}(Y')=1$ existiert, für welche $$\lim_{N\to\infty}\frac{1}{N}\sum_{n=1}^N f(T^nx)g(S^ny)=0 \text{ für alle }y\in Y'$$ erfüllt ist. Da nach Konstruktion $\overline{\mathfrak{D}}\subseteq\mathfrak{B}$ gilt, folgt $Y'\in \mathfrak{B}$ sowie nach Definition von $\nu_{\mid_{\overline{\mathfrak{D}}}}$ $$\nu(Y')=\nu_{\mid_{\overline{\mathfrak{D}}}}(Y')=1.$$
Die Proposition ist damit bewiesen.

\end{proof}

\section{Beweis für $f\in L^\infty(X,\mu)$ und Gewichte für $L^1$-Funktionen}
\label{Kapitel4.2}
Nach Proposition \ref{Behauptung5} ist die Folge $\big(f(T^nx)\big)_{n=1}^\infty$ für ein invertierbares, ergodisches maßtheoretisches dynamisches System $(X,\mathfrak{A},\mu,T)$ und eine Funktion $f\in \mathcal{K}^\bot\cap L^\infty(X,\mu)$ sowie für $\mu$"=fast alle $x\in X$ ein universell gutes Gewicht für die punktweise Konvergenz von $L^\infty$"=Funktionen. Diese Aussage verallgemeinern wir auf ein maßtheoretisches dynamisches System $(X,\mathfrak{A},\mu,T)$ mit rein atomarer invarianter $\sigma$"=Algebra und eine Funktion $f\in L^\infty(X,\mu)$. Weiterhin zeigen wir, dass  die Folge $\big(f(T^nx)\big)_{n=1}^\infty$ für $\mu$"=fast alle $x\in X$ sogar ein universell gutes Gewicht für die punktweise Konvergenz von $L^1$"=Funktionen ist.\\
\noindent Bevor wir mit Schritt V fortfahren, zeigen wir aufbauend auf Proposition \ref{Behauptung5} und Satz \ref{Satz2.2b} zunächst die folgende Proposition:
\begin{Proposition}
	\label{Behauptung6}
	Seien $(X,\mathfrak{A},\mu,T)$ ein invertierbares, ergodisches maßtheoretisches dynamisches System sowie $f\in L^\infty(X,\mu).$ Dann ist die Folge $\big(f(T^nx)\big)_{n=1}^\infty$ für $\mu$"=fast alle $x\in X$ ein universell gutes Gewicht für die punktweise Konvergenz von $L^\infty$"=Funktionen.
\end{Proposition}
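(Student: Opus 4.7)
Der Plan ist, die Aussage mittels der Zerlegung von $f\in L^\infty(X,\mu)$ in einen Anteil aus dem Kroneckerfaktor $\mathcal{K}$ und einen Anteil aus dem orthogonalen Komplement $\mathcal{K}^\bot$ auf die bereits bewiesenen Sätze \ref{Satz2.2b} und \ref{Behauptung5} zurückzuführen. Nach der im Vorkapitel zitierten Zerlegung \eqref{Kronecker} existiert nämlich eine Darstellung $f=f_1+f_2$ mit $f_1\in \mathcal{K}\cap L^\infty(X,\mu)$ und $f_2\in \mathcal{K}^\bot\cap L^\infty(X,\mu)$, wobei die Projektionen auf den Kroneckerfaktor und dessen orthogonales Komplement die Beschränktheit erhalten (siehe die Bemerkung unmittelbar vor \eqref{Kronecker}).

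Als ersten Schritt wende ich Satz \ref{Satz2.2b} auf das ergodische System $(X,\mathfrak{A},\mu,T)$ und die Funktion $f_1\in \mathcal{K}$ an und erhalte eine Menge $X_1'\in \mathfrak{A}$ mit $\mu(X_1')=1$, welche ausschließlich von $(X,\mathfrak{A},\mu,T)$ und $f_1$ abhängt, sodass für alle $x\in X_1'$ die Folge $\bigl(f_1(T^n x)\bigr)_{n=1}^\infty$ ein universell gutes Gewicht für die punktweise Konvergenz von $L^\infty$"=Funktionen ist. Als zweiten Schritt wende ich Proposition \ref{Behauptung5} auf $f_2\in \mathcal{K}^\bot\cap L^\infty(X,\mu)$ an (die Voraussetzungen der Invertierbarkeit und Ergodizität von $(X,\mathfrak{A},\mu,T)$ sind nach Annahme erfüllt) und erhalte eine Menge $X_2'\in \mathfrak{A}$ mit $\mu(X_2')=1$, die ebenfalls unabhängig von $(Y,\mathfrak{B},\nu,S)$ und $g$ ist, sodass für alle $x\in X_2'$ und ein beliebiges maßtheoretisches dynamisches System $(Y,\mathfrak{B},\nu,S)$ sowie $g\in L^\infty(Y,\nu)$ gilt
\begin{equation*}
\lim_{N\to\infty}\frac{1}{N}\sum_{n=1}^N f_2(T^n x)g(S^n y)=0 \text{ für } \nu\text{-fast alle } y\in Y.
\end{equation*}

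Abschließend setze ich $X':=X_1'\cap X_2'\in \mathfrak{A}$; wegen $\mu(X_1')=\mu(X_2')=1$ gilt $\mu(X')=1$, und $X'$ ist nach Konstruktion unabhängig vom System $(Y,\mathfrak{B},\nu,S)$ und der Funktion $g\in L^\infty(Y,\nu)$. Seien nun $x\in X'$, ein maßtheoretisches dynamisches System $(Y,\mathfrak{B},\nu,S)$ sowie $g\in L^\infty(Y,\nu)$ beliebig gewählt. Die Linearität des Ausdrucks $\frac{1}{N}\sum_{n=1}^N f(T^nx)g(S^ny)$ in $f$ liefert
\begin{equation*}
\frac{1}{N}\sum_{n=1}^N f(T^nx)g(S^ny)=\frac{1}{N}\sum_{n=1}^N f_1(T^nx)g(S^ny)+\frac{1}{N}\sum_{n=1}^N f_2(T^nx)g(S^ny),
\end{equation*}
und da beide Summanden für $\nu$"=fast alle $y\in Y$ konvergieren (der erste nach Wahl von $X_1'$, der zweite sogar gegen $0$ nach Wahl von $X_2'$), konvergiert auch die linke Seite für $\nu$"=fast alle $y\in Y$. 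Damit ist $\bigl(f(T^n x)\bigr)_{n=1}^\infty$ für alle $x\in X'$ ein universell gutes Gewicht für die punktweise Konvergenz von $L^\infty$"=Funktionen.

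Die eigentliche Arbeit ist bereits in den vorangegangenen Kapiteln geleistet worden; die einzige wirklich zu überprüfende Subtilität ist, dass die beiden Ausnahmemengen $X_1'$ und $X_2'$ tatsächlich unabhängig von $(Y,\mathfrak{B},\nu,S)$ und $g$ gewählt werden können, sodass ihr Schnitt $X'$ diese Unabhängigkeit erbt. Dies ist bei beiden Referenzen gewährleistet und stellt daher keinen substanziellen Hindernisgrund dar.
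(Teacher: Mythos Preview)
Your proof is correct and follows essentially the same approach as the paper: decompose $f=f_1+f_2$ via \eqref{Kronecker}, apply Satz \ref{Satz2.2b} to $f_1\in\mathcal{K}$ and Proposition \ref{Behauptung5} to $f_2\in\mathcal{K}^\bot\cap L^\infty(X,\mu)$, and intersect the two full-measure sets. Your explicit remark that the independence of $X_1'$ and $X_2'$ from $(Y,\mathfrak{B},\nu,S)$ and $g$ is the only subtlety to check is a welcome addition.
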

\begin{proof}[\textbf{Beweis:}]
	 Wegen \eqref{Kronecker} existieren Funktionen $f_1\in \mathcal{K}\cap L^\infty(X,\mu)$ und $f_2\in \mathcal{K}^\bot\cap L^\infty(X,\mu)$, sodass 
\begin{equation}
\label{Gleichung4.21}
f=f_1+f_2
\end{equation}
erfüllt ist. Nach Satz \ref{Satz2.2b} existieren für $f_1$ eine Menge $X_1\in \mathfrak{A}$ mit $\mu(X_1)=1$ und nach Proposition \ref{Behauptung5} für $f_2$ eine Menge $X_2\in \mathfrak{A}$ mit $\mu(X_2)=1$, sodass  die Folgen $\big(f_1(T^nx)\big)_{n=1}^\infty$ und $\big(f_2(T^nx)\big)_{n=1}^\infty$ für alle $x\in X_1\cap X_2$ universell gute Gewichte für die punktweise Konvergenz von $L^\infty$"=Funktionen sind. Insbesondere gilt $\mu(X_1\cap X_2)=1$.\\
Seien nun ein beliebiges maßtheoretisches dynamisches System $(Y,\mathfrak{B},\nu,S)$ sowie $g\in L^\infty(Y,\nu)$ gegeben. Wir fixieren ein $x\in X_1\cap X_2$. Nach Definition der Mengen $X_1$ und $X_2$ existiert eine Menge $Y'\in \mathfrak{B}$ mit $\nu(Y')=1$, sodass für alle $y\in Y'$ die Grenzwerte
\begin{equation} 
\label{Gleichung4.22}
\lim\limits_{N\to\infty}\frac{1}{N}\sum_{n=1}^N f_1(T^nx)g(S^ny)
\end{equation}
und 
\begin{equation}
\label{Gleichung4.23}
\lim_{N\to\infty}\frac{1}{N}\sum_{n=1}^N f_2(T^nx)g(S^ny)
\end{equation}
existieren. Für $y\in Y'$ existiert folglich auch der Grenzwert
\begin{equation*}
\lim_{N\to\infty}\frac{1}{N}\sum_{n=1}^N\big(f_1+f_2\big)(T^nx)g(S^ny)\overset{\eqref{Gleichung4.21}}{=}\lim_{N\to\infty}\sum_{n=1}^N f(T^nx)g(S^ny).
\end{equation*}
Die Proposition ist damit bewiesen. 
\end{proof}
\subsection{Schritt V: Invertierbare Erweiterung}
Ähnlich wie in Schritt I zeigen wir die folgende Proposition:
 \begin{Proposition}
 	\label{Behauptung7}
 Seien $(X,\mathfrak{A},\mu,T)$ ein ergodisches maßtheoretisches dynamisches System  sowie $f\in L^\infty(X,\mu)$. Dann ist für $\mu$"=fast alle $x\in X$ die Folge $\big(f(T^nx)\big)_{n=1}^\infty$ ein universell gutes Gewicht für die punktweise Konvergenz von $L^\infty$"=Funktionen.
 \end{Proposition}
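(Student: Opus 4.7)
The plan is to apply Proposition \ref{Behauptung6} to the invertible extension of $(X,\mathfrak{A},\mu,T)$ constructed in Satz \ref{Satz99}. Let $(\tilde{X},\tilde{\mathfrak{A}},\tilde{\mu},\tilde{T})$ be that extension with factor map $\pi\colon\tilde{X}\to X$, $\pi(\tilde{x})=\tilde{x}_0$; by Satz \ref{Satz99} this system is invertible, ergodic (since $(X,\mathfrak{A},\mu,T)$ is), and satisfies $\pi\circ\tilde{T}=T\circ\pi$ and $\pi_{*}\tilde{\mu}=\mu$. I would lift $f$ to $\tilde{f}:=f\circ\pi\in L^\infty(\tilde{X},\tilde{\mu})$ (noting $||\tilde{f}||_{L^\infty(\tilde{X},\tilde{\mu})}\le ||f||_{L^\infty(X,\mu)}$). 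Proposition \ref{Behauptung6} then yields a measurable set $\tilde{X}'\in\tilde{\mathfrak{A}}$ with $\tilde{\mu}(\tilde{X}')=1$, chosen independently of the test system and test function, such that $(\tilde{f}(\tilde{T}^n\tilde{x}))_{n=1}^\infty$ is a universally good weight for the pointwise convergence of $L^\infty$-functions for every $\tilde{x}\in\tilde{X}'$. Iterating the intertwining relation gives $\tilde{f}(\tilde{T}^n\tilde{x})=f(T^n\pi(\tilde{x}))$, so for every $\tilde{x}\in\tilde{X}'$ the sequence $(f(T^n\pi(\tilde{x})))_{n=1}^\infty$ is universally good.

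For any test system $(Y,\mathfrak{B},\nu,S)$ and $g\in L^\infty(Y,\nu)$, the set
$$X'_{g,Y,\nu,S}:=\Big\{x\in X\ \Big|\ \lim_{N\to\infty}\frac{1}{N}\sum_{n=1}^N f(T^nx)g(S^ny)\text{ existiert für }\nu\text{-fast alle }y\in Y\Big\}$$
belongs to $\mathfrak{A}$, by Fubini applied to the Cauchy-type condition on the measurable Cesàro averages $\Phi_N(x,y):=\frac{1}{N}\sum_{n=1}^N f(T^nx)g(S^ny)$. From the preceding paragraph $\tilde{X}'\subseteq\pi^{-1}(X'_{g,Y,\nu,S})$, hence $\mu(X'_{g,Y,\nu,S})=\tilde{\mu}(\pi^{-1}(X'_{g,Y,\nu,S}))\ge\tilde{\mu}(\tilde{X}')=1$ by $\pi_{*}\tilde{\mu}=\mu$. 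To combine all test systems into one measurable set $X'\in\mathfrak{A}$ with $\mu(X')=1$ on which universal goodness holds uniformly, I would exploit the crucial fact that $\tilde{X}'$ does not depend on the test system: disintegrating $\tilde{\mu}$ over $\pi$ produces conditional measures $\tilde{\mu}_x$ on the fibers $\pi^{-1}(\{x\})$ with $\int_X\tilde{\mu}_x(\tilde{X}')\,d\mu(x)=\tilde{\mu}(\tilde{X}')=1$, so that $X':=\{x\in X:\tilde{\mu}_x(\tilde{X}')=1\}\in\mathfrak{A}$ satisfies $\mu(X')=1$, and every $x\in X'$ admits a lift $\tilde{x}\in\tilde{X}'\cap\pi^{-1}(\{x\})$; therefore $(f(T^nx))_{n=1}^\infty$ is universally good for all $x\in X'$.

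The main technical obstacle is exactly this descent step, since the naive candidate $\pi(\tilde{X}')$ is a projection of a measurable set and therefore not automatically measurable. The disintegration argument sketched above is the cleanest resolution when it is available; in the generality of the thesis one can alternatively argue directly by observing that $\pi^{-1}(X\setminus X')\subseteq\tilde{X}\setminus\tilde{X}'$ is $\tilde{\mu}$-null and combining completeness of $(X,\mathfrak{A},\mu)$ with $\pi_{*}\tilde{\mu}=\mu$ to conclude $\mu(X\setminus X')=0$. All other steps are routine lifts of established material, so once the descent is handled the proposition follows immediately from Proposition \ref{Behauptung6}.
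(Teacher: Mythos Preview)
Your approach via the invertible extension and Proposition~\ref{Behauptung6} is exactly the paper's. The paper's descent is more direct than yours: rather than first treating each test system separately and then invoking disintegration, it defines $X'\subseteq X$ as the set of universally good base points and $\tilde{X}_0\subseteq\tilde{X}$ as the analogous set upstairs, observes from $\tilde{f}(\tilde{T}^n\tilde{x})=f(T^n\pi(\tilde{x}))$ that $\pi^{-1}(X')=\tilde{X}_0\supseteq\tilde{X}'$, and then reads off $\mu(X')=\tilde{\mu}(\pi^{-1}(X'))\ge\tilde{\mu}(\tilde{X}')=1$ directly from the definition of $\tilde{\mu}$. You are right that the measurability of $X'$ is the delicate point here; the paper simply does not address it. Your disintegration route resolves it but under regularity hypotheses the thesis does not impose, and your completeness alternative as written does not close the gap: from $\pi^{-1}(X\setminus X')\subseteq\tilde{X}\setminus\tilde{X}'$ you only get that every $\mathfrak{A}$-measurable subset of $X\setminus X'$ is $\mu$-null (inner measure zero), not that $X\setminus X'$ itself lies inside a $\mu$-null set, which is what completeness of $(X,\mathfrak{A},\mu)$ would require as input.
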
 
\begin{proof}[\textbf{Beweis:}] Sei $(\tilde{X},\tilde{\mathfrak{A}},\tilde{\mu},\tilde{T})$ die  invertierbare Erweiterung des Systems  $(X,\mathfrak{A},\mu,T)$ aus Satz \ref{Satz99} sowie $\pi$ die Faktorabbildung $ \pi: \tilde{X}\to X:\ \tilde{x}\mapsto \tilde{x}_0 $. Zur Veranschaulichung betrachten wir das folgende kommutative Diagramm:
\begin{equation*}
\begin{tikzcd}
\tilde{X} \arrow{d}[swap]{\pi} \arrow{r}{\tilde{T}} & \tilde{X} \arrow{d}{\pi} \\
X\arrow{r}{T} & X 
\end{tikzcd}
\end{equation*}	
	 \noindent  Da $(X,\mathfrak{A},\mu,T)$ ergodisch ist, gilt dies nach Satz \ref{Satz99} auch für $(\tilde{X},\tilde{\mathfrak{A}},\tilde{\mu},\tilde{T})$. Wir definieren nun $\tilde{f}\in L^\infty(\tilde{X}, \tilde{\mu})$ durch
\begin{equation}
\label{Gleichung4.24}
\tilde{f}:=f\circ \pi.
\end{equation}
Nach Proposition \ref{Behauptung6} existiert eine Menge \begin{equation} \label{Gleichung4.1}\tilde{X}'\in \tilde{\mathfrak{A}}\end{equation} mit $\tilde{\mu}(\tilde{X}')=1$, sodass die Folge $\big(\tilde{f}(\tilde{T}^n\tilde{x})\big)_{n=1}^\infty$ für alle $\tilde{x}\in \tilde{X}'$ ein universell gutes Gewicht für die punktweise Konvergenz von $L^\infty$"=Funktionen ist. Folglich gilt für ein beliebiges maßtheoretisches dynamisches System $(Y,\mathfrak{B},\nu,S)$ und eine Funktion $g\in L^\infty(Y,\nu)$ sowie alle $\tilde{x}\in \tilde{X}'$:
\begin{equation} \label{Gleichung5.4}\lim_{N\to\infty}\frac{1}{N}\sum_{n=1}^N \tilde{f}(\tilde{T}^n\tilde{x})g(S^ny) \text{ existiert für }\nu\text{"=fast alle }y\in Y.\end{equation}
Da $\pi$ eine Faktorabbildung ist, existiert deshalb für alle $\tilde{x}\in \tilde{X}'$ und $\nu$"=fast alle $y\in Y$ der Grenzwert
\begin{equation}
\label{Gleichung4.113b}
\begin{split}
\lim_{N\to\infty}\frac{1}{N}\sum_{n=1}^N \tilde{f}(\tilde{T}^n\tilde{x})g(S^ny)&\overset{\eqref{Gleichung4.24}}{=}\lim_{N\to\infty}\frac{1}{N}\sum_{n=1}^N \big(f\circ \pi\big)(\tilde{T}^n\tilde{x})g(S^ny)\\
&\hspace{0.3cm}=\lim_{N\to\infty}\frac{1}{N}\sum_{n=1}^N f\big(T^n\pi(\tilde{x})\big)g(S^ny).
\end{split}
\end{equation}
Wir definieren $$\tilde{X}_0\hspace{-0.1cm}:=\hspace{-0.1cm}\big\{\tilde{x}\in \tilde{X}\big| \big(\tilde{f}(\tilde{T}^n\tilde{x})\big)_{n=1}^\infty \hspace{-0.1cm}\text{ ist universell gutes } \text{Gewicht für die punktw. Konv. von }L^\infty\text{-Fkt.}\big\}$$ sowie 
$$X'\hspace{-0.1cm}:=\hspace{-0.1cm}\big\{x\in X\big| \big(f(T^nx)\big)_{n=1}^\infty \hspace{-0.1cm}\text{ ist universell gutes Gewicht für die punktw. Konv. von }L^\infty\text{-Fkt.}\big\}.$$ Für die in \eqref{Gleichung4.1} definierte Menge $\tilde{X}'$ gilt deshalb $\tilde{X}'\subseteq \tilde{X}_0$ sowie weiterhin wegen \eqref{Gleichung4.113b} $\pi^{-1}(X')=\tilde{X}_0,$  $\tilde{X}_0$ ist also eine Produktmenge. Mithilfe der Definition von $\tilde{\mu}$ erhalten wir
$$1=\tilde{\mu}(\tilde{X}')\leq \tilde{\mu}( \tilde{X}_0)=\tilde{\mu}\big(\pi^{-1}(X')\big)=\mu(X').$$
Die Proposition ist damit bewiesen.
\end{proof}

\subsection{Schritt VI: Zerlegung in Atome}
Aufbauend auf Behauptung  \ref{Behauptung16} und Proposition \ref{Behauptung7}  beweisen wir nun die folgende Proposition:
\begin{Proposition}
\label{Behauptung8}
Seien $(X,\mathfrak{A},\mu,T)$ ein maßtheoretisches dynamisches System mit rein atomarer invarianter $\sigma$"=Algebra  sowie $f\in L^\infty(X,\mu)$. Dann ist  die Folge $\big(f(T^nx)\big)_{n=1}^\infty$ für $\mu$"=fast alle $x\in X$ ein universell gutes Gewicht für die punktweise Konvergenz von $L^\infty$"=Funktionen.
\end{Proposition}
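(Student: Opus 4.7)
Der Beweis soll durch eine diskrete ergodische Zerlegung längs der Atome der invarianten $\sigma$"=Algebra $\mathfrak{E}$ geführt werden. Die Idee ist, dass die Pure-Atomizität von $\mathfrak{E}$ es erlaubt, die Aussage fast überall auf jedem $\mathfrak{E}$"=Atom separat mittels Proposition \ref{Behauptung7} zu zeigen und die Ergebnisse anschließend über die abzählbare Vereinigung der Atome zu bündeln.

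Zunächst würde ich mittels Definition \ref{Atom} eine höchstens abzählbare Folge $(A_n)_{n\in I}$ von paarweise disjunkten $\mu_{\mid_{\mathfrak{E}}}$"=Atomen aus $\mathfrak{E}$ mit $\mu\bigl(\bigcup_{n\in I}A_n\bigr)=1$ wählen. Dass eine paarweise disjunkte Auswahl möglich ist, ergibt sich aus der Definition der Atome: Für zwei Atome $A,A'\in \mathfrak{E}$ ist $A\cap A'$ entweder eine $\mu$"=Nullmenge oder $\mu(A\triangle A')=0$, sodass man nach Ersetzung durch geeignete Vertreter von Äquivalenzklassen ohne Einschränkung Disjunktheit erhält. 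Da jedes $A_n\in \mathfrak{E}$ gemäß Definition \ref{invariant} $T$"=invariant mod $\mu$ ist, sind die Voraussetzungen von Behauptung \ref{Behauptung16} erfüllt und liefern für jedes $n\in I$ ein $T$"=invariantes, ergodisches Wahrscheinlichkeitsmaß
\[
\mu_{A_n}(B)=\frac{\mu(B\cap A_n)}{\mu(A_n)},\quad B\in \mathfrak{A}.
\]

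Auf jedes ergodische System $(X,\mathfrak{A},\mu_{A_n},T)$ würde ich dann Proposition \ref{Behauptung7} mit der Funktion $f\in L^\infty(X,\mu_{A_n})$ anwenden (wobei $\|f\|_{L^\infty(X,\mu_{A_n})}\leq \|f\|_{L^\infty(X,\mu)}$ gilt). Man erhält jeweils eine Menge $X_n'\in \mathfrak{A}$ mit $\mu_{A_n}(X_n')=1$, auf welcher $\bigl(f(T^kx)\bigr)_{k=1}^\infty$ für alle $x\in X_n'$ ein universell gutes Gewicht für die punktweise Konvergenz von $L^\infty$"=Funktionen ist. Aus $\mu_{A_n}(X_n')=1$ folgt $\mu(A_n\setminus X_n')=0$.

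Schließlich setze ich
\[
X':=\bigcup_{n\in I}\bigl(A_n\cap X_n'\bigr)\in \mathfrak{A}.
\]
Da die $A_n$ paarweise disjunkt sind und $\mu(A_n\setminus X_n')=0$ für alle $n\in I$ gilt, folgt
\[
\mu(X')=\sum_{n\in I}\mu(A_n\cap X_n')=\sum_{n\in I}\mu(A_n)=1.
\]
Für jedes $x\in X'$ existiert ein eindeutiges $n\in I$ mit $x\in A_n\cap X_n'$, und wegen der Eigenschaft von $X_n'$ ist die Folge $\bigl(f(T^kx)\bigr)_{k=1}^\infty$ ein universell gutes Gewicht für die punktweise Konvergenz von $L^\infty$"=Funktionen. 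Damit ist die Proposition bewiesen. Da der Beweis im Wesentlichen eine reine Bündelung bereits vorhandener Resultate ist, sehe ich keine echte Schwierigkeit; die einzige technische Sorgfalt liegt in der disjunkten Wahl der Atome und der Kontrolle der auftretenden Nullmengen beim Übergang zwischen $\mu_{A_n}$ und $\mu$.
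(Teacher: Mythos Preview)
Your proof is correct and follows essentially the same route as the paper: decompose along the at most countably many $\mathfrak{E}$-atoms, apply Behauptung~\ref{Behauptung16} to obtain ergodic conditional measures, invoke Proposition~\ref{Behauptung7} on each, and reassemble via a countable union. The only cosmetic difference is that you set $X':=\bigcup_n (A_n\cap X_n')$ whereas the paper takes $X':=\bigcup_n X_n'$; both yield $\mu(X')=1$ by the same computation.
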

\begin{proof}[\textbf{Beweis:}]
Es bezeichne $\mathfrak{E}$ die invariante $\sigma$"=Algebra aus Definition \ref{invariant}. Da diese $\sigma$"=Algebra nach Voraussetzung rein atomar ist, existiert nach Definition \ref{Atom} eine Indexmenge $I\subseteq \mathbb{N}$ und eine Folge $(E_k)_{k\in I}\subseteq \mathfrak{E}$  von $\mu_{\mid_{\mathfrak{E}}}$"=Atomen, für welche 
\begin{equation} \label{Gleichung2}1=\mu_{\mid_{\mathfrak{E}}}\big(\bigcup_{k\in I}E_k\big)\overset{X\in \mathfrak{E}}{=}\mu\big(\bigcup_{k\in I}E_k\big)
\end{equation} erfüllt ist. Wir können ohne Beschränkung der Allgemeinheit annehmen, dass die Mengen $(E_k)_{k\in I}$ paarweise disjunkt sind. Da die Mengen $(E_k)_{k\in I}$ $\mu_{\mid_{\mathfrak{E}}}$-Atome sind, gilt nach Definition $0<\mu_{\mid_{\mathfrak{E}}}(E_k)=\mu(E_k)$, $k\in I$. Wir definieren die Maße $\mu_k,\, k\in I$ durch 
$$\mu_k(B):=\frac{\mu(B\cap E_k)}{\mu(E_k)},\ \  B\in \mathfrak{A}.$$ Nach Behauptung \ref{Behauptung16} sind diese Maße $T$-invariant und ergodisch.\\
\noindent Sei nun ein $k\in I$ beliebig gewählt (nach Voraussetzung ist $I\neq \emptyset$). Das System $(X,\mathfrak{A},\mu_k,T)$ ist ergodisch und wir erhalten mithilfe von Proposition \ref{Behauptung7} eine Menge $X_k\in \mathfrak{A}$ mit \begin{equation} \label{Gleichung4.123}\mu_k(X_k)=1,\end{equation} sodass die Folge $\big(f(T^nx)\big)_{n=1}^\infty$ für alle $x\in X_k$ ein universell gutes Gewicht für die punktweise Konvergenz von $L^\infty$"=Funktionen ist. Für alle $x\in X_{k}$ und ein beliebiges maßtheoretisches dynamisches System $(Y,\mathfrak{B},\nu,S)$ sowie eine Funktion $g\in L^\infty(Y,\nu)$ gilt also: \begin{equation}
\label{Gleichung4.30b} \lim_{N\to\infty}\frac{1}{N}\sum_{n=1}^N f(T^nx)g(S^ny)\text{ konvergiert für }\nu\text{"=fast alle }y\in Y.\end{equation} \noindent Wir definieren  \begin{equation} \label{Gleichung4.2}X':=\bigcup_{k\in I} X_{k}\in \mathfrak{A}.\end{equation} \noindent Da die Konvergenz in \eqref{Gleichung4.30b} unabhängig vom Maß auf dem messbaren Raum $(X,\mathfrak{A})$ ist, ist die Folge $\big(f(T^nx)\big)_{n=1}^\infty$ für alle  $x\in X'$ ein universell gutes Gewicht für die punktweise Konvergenz von $L^\infty$"=Funktionen. Zum Beweis von Proposition \ref{Behauptung8} verbleibt deshalb zu zeigen, dass $\mu(X')=1$ erfüllt ist.\\
\noindent Wegen \eqref{Gleichung2} gilt $\mu(X')=\mu\big(X'\cap (\bigcup_{k\in I}E_k)\big)=\mu\big(\bigcup_{k\in I}(X'\cap E_k)\big)$. Da die Mengen $(E_k)_{k\in I}$ paarweise disjunkt sind, erhalten wir somit
\begin{equation*}
\mu(X')=\sum_{k\in I}\mu\big(X'\cap E_k\big)
=\sum_{k\in I}\mu(E_k)\cdot \mu_k(X')
\overset{\eqref{Gleichung4.2}}{\geq} \sum_{k\in I}\mu(E_k)\cdot \mu_k(X_k)
\overset{\eqref{Gleichung2}}{\underset{\eqref{Gleichung4.123}}{=}}1.
\vspace{-0.9cm}
\end{equation*}
\end{proof}

\begin{Bemerkung}
	Wir könnten die Proposition auch mithilfe der ergodischen Zerlegung aus Theorem \ref{Zerlegung} beweisen. Um die Menge $X'$ in \eqref{Gleichung4.2} als abzählbare Vereinigung schreiben und damit sicherstellen zu können, dass diese ein Element von $\mathfrak{A}$ ist, wäre jedoch ebenfalls die Annahme einer rein atomaren invarianten $\sigma$"=Algebra erforderlich.\\
	\noindent Nach Behauptung \ref{Behauptung9} ist die invariante $\sigma$"=Algebra insbesondere dann rein atomar, wenn
	 $L^\infty(X,\mathfrak{E},\mu_{\mid_{\mathfrak{E}}})$ separabel ist. Eine weitere hinreichende Bedingung ist die Ergodizität des Systems $(X,\mathfrak{A},\mu,T)$.
\end{Bemerkung}

\subsection{Schritt VII: Abschluss des Beweises des Rückkehrzeitentheorems}
\label{SchrittVII}
In diesem Abschnitt zeigen wir, dass die mithilfe von Proposition \ref{Behauptung8} gewonnenen Gewichte sogar universell gute Gewichte für die punktweise Konvergenz von $L^1$"=Funktionen sind. Mit diesem letzten Verallgemeinerungsschritt schließen wir den Beweis des Rückkehrzeitentheorems von Bourgain für maßtheoretische dynamische Systeme mit rein atomarer invarianter $\sigma$"=Algebra ab.
\begin{proof}[\textbf{Beweis des Rückkehrzeitentheorems:}]
	Es bezeichne $X'\in \mathfrak{A}$ mit $\mu(X')=1$ die Menge aus Proposition \ref{Behauptung8}, für welche die Folge $\big(f(T^nx)\big)_{n=1}^\infty$ für alle $x\in X'$ ein universell gutes Gewicht für die punktweise Konvergenz von $L^\infty$"=Funktionen ist.
	Wir können ohne Beschränkung der Allgemeinheit annehmen, dass für alle $x\in X'$ und alle $n\in \mathbb{N}$\vspace{-0.1cm} $$|f(T^nx)|\leq ||f||_{L^\infty(X,\mu)}$$ erfüllt ist.
Seien weiterhin ein beliebiges maßtheoretisches dynamisches System $(Y,\mathfrak{B},\nu,S)$ und eine Funktion $g\in L^1(Y,\nu)$ gegeben. Wir definieren für $k\in \mathbb{N}$\vspace{-0.1cm}
$$g_k:=g\cdot \mathbbm{1}_{|g|\leq k}.$$ Nach Behauptung \ref{Lemma9} existiert eine Menge $Y'_1\in\mathfrak{B}$ mit vollem Maß, sodass für alle $y\in Y'_1$\vspace{-0.1cm}
\begin{equation}
\label{Gleichung4.33}
\lim_{k\to\infty}\lim_{N\to\infty}\frac{1}{N}\sum_{n=1}^N |g-g_k|(S^ny)=0
\end{equation}
erfüllt ist.\vspace{-0.1cm}
Sei nun \begin{equation} \label{Gleichung4.34} x\in X' \end{equation} beliebig gewählt. Da die Folge $\big(f(T^nx)\big)_{n=1}^\infty$ nach Proposition \ref{Behauptung8} ein universell gutes Gewicht für die punktweise Konvergenz von $L^\infty$"=Funktionen ist und der abzählbare Schnitt von Mengen mit vollem Maß wieder Maß $1$ hat, existiert eine Menge $Y'_2\in \mathfrak{B}$ mit vollem Maß derart, dass der Grenzwert \vspace{-0.1cm}
$$\lim_{N\to\infty}\frac{1}{N}\sum_{n=1}^N f(T^nx)g_k(S^ny)$$ für alle $y\in Y'_2$ und $k\in \mathbb{N}$ existiert. Wir setzen\vspace{-0.1cm}
$$\mathfrak{B}\ni Y':=Y'_1\cap Y'_2.$$ Es gilt $\nu(Y')=1.$
Nun zeigen wir, dass die Folge $\big(\frac{1}{N}\sum_{n=1}^N f(T^nx)g(S^ny)\big)_{N=1}^\infty$ für alle $y\in Y'$ eine Cauchyfolge ist.\\ \noindent  Sei dazu $\epsilon>0$ sowie $y\in Y'$ beliebig gewählt.  Wir betrachten für $N,M,k\in \mathbb{N}$ die Ungleichung\vspace{-0.4cm}
\begin{equation}
\label{Gleichung2.23b}
\begin{split}
0&\leq \bigg|\frac{1}{N}\sum_{n=1}^N f(T^nx)g(S^ny)-\frac{1}{M}\sum_{n=1}^M f(T^nx)g(S^ny)\bigg|\\
&\leq \underbrace{\frac{1}{N}\sum_{n=1}^N||f||_{L^\infty(X, \mu)}\big|g-g_k\big|(S^ny)}_{I}+\underbrace{\frac{1}{M}\sum_{n=1}^M||f||_{L^\infty(X, \mu)}\big|g-g_k\big|(S^ny)}_{II}\\&+\underbrace{\bigg|\frac{1}{N}\sum_{n=1}^Nf(T^nx)g_k(S^ny)-\frac{1}{M}\sum_{n=1}^M f(T^nx)g_k(S^ny)\bigg|}_{III}.
\end{split}
\end{equation}\vspace{-0.4cm}\\
\noindent $\textbf{Zu I und II:}$  Nach Wahl von $y\in Y'$ in Verbindung mit \eqref{Gleichung4.33} existieren ein $k_0 \in \mathbb{N}$ sowie ein $N_{k_0}\in \mathbb{N}$, sodass für alle $N>N_{k_0}$ \vspace{-0.3cm}$$||f||_{L^\infty(X, \mu)}\cdot\frac{1}{N}\sum_{n=1}^N \big|g-g_{k_0}\big|(S^ny)<\frac{\epsilon}{3}$$\vspace{-0.6cm}\\ erfüllt ist. \\
\noindent $\textbf{Zu III:}$ Aufgrund der Konstruktion von $Y'$ existiert der Grenzwert \vspace{-0.22cm}$$\lim_{N\to\infty}\frac{1}{N}\sum_{n=1}^N f(T^nx)g_{k}(S^ny)$$\vspace{-0.42cm}\\ für alle $y\in Y'$ und alle $k\in \mathbb{N}$. Deshalb ist für $k=k_0$ die Folge $\big(\frac{1}{N}\sum_{n=1}^N f(T^nx)g_{k_0}(S^ny)\big)_{n=1}^\infty$ eine Cauchyfolge. Also existiert ein $N_0 \in \mathbb{N}$, sodass für alle $N,M>N_0$ \vspace{-0.18cm}$$\Big|\frac{1}{N}\sum_{n=1}^N f(T^nx)g_{k_0}(S^ny)-\frac{1}{M}\sum_{n=1}^M f(T^nx)g_{k_0}(S^ny)\Big|<\frac{\epsilon}{3}$$ erfüllt ist. 
Indem wir in $\eqref{Gleichung2.23b}$ $k=k_0$ setzen, erhalten wir deshalb für alle $N, M >\max\{N_{k_0}, N_0\}$ \vspace{-0.18cm}
$$0\leq \Big|\frac{1}{N}\sum_{n=1}^N f(T^nx)g(S^ny)-\frac{1}{M}\sum_{n=1}^M f(T^nx)g(S^ny)\Big|< \frac{\epsilon}{3}+\frac{\epsilon}{3}+\frac{\epsilon}{3}=\epsilon.$$
\noindent Weil wir $\epsilon>0$ beliebig gewählt haben, existiert folglich für alle  $y\in Y'$ der Grenzwert\vspace{-0.18cm} $$\lim_{N\to\infty}\frac{1}{N}\sum_{n=1}^N f(T^nx)g(S^ny).$$ Da wir auch $x\in X'$ beliebig gewählt haben, ist das Rückkehrzeitentheorem von Bourgain damit bewiesen.
\end{proof}

\chapter{Schlusswort}
Der in dieser Arbeit vorgestellte Beweis des Rückkehrzeitentheorems von Bourgain beruht auf der Zerlegung einer Funktion $f\in L^\infty(X,\mu)$ bezüglich des Kroneckerfaktors $\mathcal{K}$, um die Fälle $f\in \mathcal{K}$ und $f\in \mathcal{K}^\bot$ getrennt behandeln zu können. Für Eigenfunktionen zeigen wir das Rückkehrzeitentheorem durch Anwendung des punktweisen Ergodensatzes auf das Produktsystem  $({\mathbb{T}}\times Y, \mathbb{B}_{{\mathbb{T}}}\times \mathfrak{B}, \lambda_{{\mathbb{T}}}\otimes \nu, \phi_{\lambda })$ (siehe Kapitel \ref{Kapitel2}). Den Beweis für $f\in \mathcal{K}^\bot \cap L^\infty(X,\mu)$  führen wir in Kapitel \ref{Kapitel3} indirekt für einen Spezialfall. Dazu konstruieren wir eine Folge von Intervallen $\big((L_j, M_j)\big)_{j=1}^J$, auf welcher sich der im Rückkehrzeitentheorem betrachtete Grenzwert "`schlecht"' (vgl. Schritt I), der zur Definition der Menge $X_1$ verwendete Grenzwert jedoch "`gut"' (vgl. Schritt II) verhält. Darauf aufbauend leiten wir die  Eigenschaften $(\alpha)$ und $(\beta)$ her und führen diese zu einem Widerspruch (Schritt III). In Kapitel \ref{allgemein} verallgemeinern wir aufbauend auf eigenen Überlegungen die in Kapitel \ref{Kapitel2} und \ref{Kapitel3} bewiesenen Spezialfälle auf das Rückkehrzeitentheorem von Bourgain. Die Voraussetzung, die invariante $\sigma$"=Algebra sei rein atomar, benötigen wir, um die Menge $X'$ in Beweisschritt VI (Kapitel 5) als messbare Menge mit vollem Maß konstruieren zu können. Ersetzen wir diese Voraussetzung durch die stärkere Annahme der Ergodizität sowie die Funktion $f\in L^\infty(X,\mu)$ durch die Funktion $\mathbbm{1}_A$, so erhalten wir das Rückkehrzeitentheorem in der Formulierung von \cite{Bourgain1989}:
 \vspace{0.15cm}\\
\noindent \textbf{Rückkehrzeitentheorem.}\quad \textit{Seien $(X, \mathfrak{A}, \mu, T)$ ein  ergodisches maßtheoretisches dynamisches System und $A\in \mathfrak{A}$ eine Menge mit $\mu(A)>0$. 
	Dann ist die Folge $\big(\mathbbm{1}_A\left(T^n x\right)\big)_{n=1}^{\infty}$ für $\mu$-fast alle $x \in X$ 
	ein universell gutes Gewicht für die punktweise Konvergenz von $L^1$"=Funktionen.}\vspace{0.15cm}\\
\noindent In \cite[S.65]{Assani2003} ist das Rückkehrzeitentheorem sogar für ein beliebiges maßtheoretisches dynamisches System $(X,\mathfrak{A},\mu,T)$ formuliert, allerdings wird die Aussage nur im ergodischen Fall bewiesen. Ein alternativer Beweis, welcher mithilfe der 1967 von Furstenberg eingeführten \textit{joinings} geführt wird, befindet sich in  \cite{Rudolph1994}).\vspace{0.2cm}\todo{schauen, dass der extra-Abstand hier nicht komisch ist}\\ Seit der Veröffentlichung des Rückkehrzeitentheorems im Jahr 1988 wurden zahlreiche Verallgemeinerungen bewiesen sowie weiterführende Fragestellungen aufgeworfen. Einen Überblick über aktuelle Entwicklungen findet man in \cite{Assani2014}. Unter Ausnutzung des Banach"=Prinzips und der Hölder"=Ungleichung lässt sich das Rückkehrzeitentheorem auf den Fall  $f\in L^p(X,\mu)$ und $g\in L^q(Y,\nu)$ mit $\frac{1}{p}+\frac{1}{q}\leq 1$ verallgemeinern (\cite[S.44]{Assani2014}). Eine bisher nur teilweise beantwortete Frage ist, welche Konvergenzaussagen im Fall $\frac{1}{p}+\frac{1}{q}>1$ zutreffend sind. In \cite{Assani2003b} wird das folgende negative Resultat gezeigt:\vspace{0.2cm}\\
\noindent \textbf{Theorem}. \quad \textit{Sei $(X,\mathfrak{A},\mu,T)$ ein ergodisches maßtheoretisches dynamisches System. Dann existiert eine Funktion $f\in L^1(X,\mu)$ und eine Menge $X'\subseteq X$ mit vollem Maß, sodass für $x\in X'$ und ein beliebiges ergodisches maßtheoretisches dynamisches System $(Y,\mathfrak{B},\nu,S)$ eine Funktion $g\in L^1(Y,\nu)$ derart existiert, dass der Grenzwert $\lim_{N\to\infty} \frac{1}{N}\sum_{n=0}^{N-1} f(T^nx)g(S^ny)$ für $\nu$"=fast alle $y\in Y$ divergiert.} \vspace{0.2cm}\\
\noindent 
 In \cite{Demeter2008} wird hingegen gezeigt, dass das Rückkehrzeitentheorem mit $f\in L^p(X,\mu)$ und $g\in L^q(Y,\nu)$ für $p>1$ und $q\geq 2$ Gültigkeit besitzt. Aufbauend darauf beweist Demeter in \cite{Demeter2012}, dass das Rückkehrzeitentheorem für beliebige $1<p,q \leq \infty$ mit $\frac{1}{p}+\frac{1}{q}<\frac{3}{2}$ wahr ist.\\
 \noindent Eine andere Fragestellung ist, unter welchen Voraussetzungen man für alle $x\in X$ durch $\big(f(T^nx)\big)_{n=1}^\infty$  universell gute Gewichte für die punktweise Konvergenz von $L^1$"=Funktionen erhält. Eine positive Aussage dazu ist in  \cite[S.125]{Assani2003} formuliert.\\ \noindent  Als eine weitere Verallgemeinerung des Rückkehrzeitentheorems von Bourgain können Mittel der Form $\frac{1}{N} \sum_{n=1}^N \prod_{j=1}^{J}f_j(T_j^nx_j)$ untersucht werden. Ein zentrales Resultat zu multiplen Rückkehrzeiten entstammt \cite{Rudolph1997} und besagt, dass für $J\in \mathbb{N}$ und beliebige maßtheoretische dynamische System $(X_1, \mathfrak{A}_1, \mu_1,T_1)$, $(X_2, \mathfrak{A}_2, \mu_2, T_2), \dots, (X_J, \mathfrak{A}_J, \mu_J, T_J)$ und Funktionen $f_1\in L^\infty(X_1,\mu_1)$, $f_2\in L^\infty(X_2,\mu_2),$ $\dots, f_J\in L^\infty(X_J,\mu_J)$ die Folge $\big(f_1(T_1^nx_1)\cdot f_2(T_2^nx_2)\cdot\vphantom{y} \dots \vphantom{y} \cdot f_J(T_J^nx_J)\big)_{n=1}^\infty$ für $\mu_j$"=fast alle $x_j\in X_j$, $j=1, \dots, J$ ein universell gutes Gewicht für die punktweise Konvergenz von $L^\infty$"=Funktionen ist.\\
 \noindent 
Ersetzt man im Rückkehrzeitentheorem von Bourgain die Folge $\big(g(S^ny)\big)_{n=1}^\infty$ durch eine nicht notwendig von einem maßtheoretischen dynamischen System erzeugte Zahlenfolge $(a_n)_{n=1}^\infty$, wirft dies die Frage auf, für welche Folgen $(a_n)_{n=1}^\infty$ sich analoge Aussagen beweisen lassen. In \cite{Host2009} wird gezeigt, dass für ein beliebiges ergodisches maßtheoretisches dynamisches System $(X,\mathfrak{A},\mu,T)$ und eine Funktion $f\in L^1(X,\mu)$ eine Menge $X'\in \mathfrak{A}$ mit $\mu(X')=1$ existiert, sodass der Grenzwert $\lim_{N\to\infty}\frac{1}{N}\sum_{n=1}^N a_nf(T^nx)$ für ein beliebiges Element $(a_n)_{n=1}^\infty$ aus einer bestimmten Klasse von Folgen (sogenannte \textit{nilsequences}) existiert. Eine Verallgemeinerung dieses Resultats für die Konvergenz entlang von temperierten F\o lner"=Folgen wird in \cite{Eisner2012} bewiesen. Eine Aussage für den multiplen Fall findet man in \cite{Zorin2014} sowie weitere Resultate unter anderem für sogenannte lineare Folgen in \cite{Eisner2013}.\\
\noindent Die hier vorgestellten Resultate und Problemstellungen behandeln nur einen kleinen Teil aktueller mit dem Rückkehrzeitentheorem verknüpfter Forschungsfragen. Jean Bourgain konnte mit dem Beweis des Rückkehrzeitentheorems nicht nur ein für eine lange Zeit offenes Problem lösen, sondern übt damit auch heute noch einen großen Einfluss auf aktuelle Entwicklungen in der Ergodentheorie aus.
\clearpage

\bibliography{library}
\addcontentsline{toc}{section}{Literaturverzeichnis}

\end{document}